

\documentclass[11pt]{amsart} 

\usepackage[utf8]{inputenc} 

\usepackage{lipsum}


\usepackage{geometry} 
\geometry{letterpaper} 

\usepackage{graphicx} 

\usepackage[english]{babel}


\usepackage{booktabs} 
\usepackage{array} 
\usepackage{paralist} 
\usepackage{verbatim} 
\usepackage{subfig} 

\usepackage{caption}

\usepackage{amsfonts,amsmath,amssymb,bbm,amsthm,amsbsy,amscd}
\usepackage{tikz-cd}
\usepackage{mathtools}
\usepackage{pinlabel}


\usepackage{color}
\usepackage{pdfcolmk}

\usepackage{fancyhdr} 
\pagestyle{fancy} 
\lhead{}\chead{}\rhead{}
\lfoot{}\cfoot{\thepage}\rfoot{}


\usepackage{tikz}
\usepackage{tikz-cd}

\usepackage{IEEEtrantools}

\newenvironment{equ*}[1]{\begin{IEEEeqnarray*}{#1}}{\end{IEEEeqnarray*}}


\usepackage[pdfborder={0 0 0 [0 0 ]},bookmarksdepth=3,bookmarksopen=true]{hyperref}
\usepackage[capitalize]{cleveref}
\usepackage[ps,all,arc,rotate]{xy}
\usepackage{bbm} 

\hypersetup{
    colorlinks=true,
    linkcolor=black,
    citecolor=black,
    filecolor=black,
    urlcolor=black,
}

\usepackage{cleveref}


\makeatletter                                                   
\newtheorem*{rep@theorem}{\rep@title}
\newcommand{\newreptheorem}[2]{%
\newenvironment{rep#1}[1]{%
 \def\rep@title{#2 \ref{##1}}%
 \begin{rep@theorem}}%
 {\end{rep@theorem}}}
\makeatother

\newtheorem{thm}{Theorem}[section]
\newtheorem{lemma}[thm]{Lemma}

\crefname{lemma}{Lemma}{Lemmata}
\newtheorem{prop}[thm]{Proposition}
\newtheorem{corr}[thm]{Corollary}

\newtheorem*{conical_circle_theorem}{Conical Circle Theorem~\ref{theorem:conical_action}}
\newtheorem*{countable_orbit_theorem}{Countable Orbit Theorem~\ref{theorem:countable_orbit}}
\newtheorem*{rigidity_theorem}{Rigidity Theorem~\ref{thm: rigidity}}
\newtheorem*{homology_theorem}{Homology Theorem~\ref{thm: H_2}}
\newtheorem*{thm*}{Theorem}
\newtheorem*{lemma*}{Lemma}
\newtheorem*{prop*}{Proposition}
\newtheorem*{corr*}{Corollary}
\newtheorem*{claim*}{Claim}

\theoremstyle{remark}
\newtheorem{rmk}[thm]{Remark}

\newtheorem{quest}[thm]{Question}

\newtheorem*{rmk*}{Remark}
\newtheorem*{conj*}{Conjecture}
\newtheorem*{quest*}{Question}

\theoremstyle{definition}
\newtheorem{defn}[thm]{Definition}
\newtheorem{exmp}[thm]{Example}

\newtheorem*{defn*}{Definition}
\newtheorem*{exmp*}{Example}

\newreptheorem{theorem}{Theorem}
\newreptheorem{corollary}{Corollary}
\newreptheorem{proposition}{Proposition}

\newcommand{\R}{\mathbb{R}}
\def\C{\mathbb{C}}

\newcommand{\D}{\mathbb{D}}
\newcommand{\Q}{\mathbb{Q}}
\newcommand{\Z}{\mathbb{Z}}
\newcommand{\N}{\mathbb{N}}




%


\newcommand{\Gammahat}{\widehat{\Gamma}}
\newcommand{\Omegahat}{\widehat{\Omega}}

\newcommand{\Hom}{\mathrm{Hom}}
\newcommand{\Homeorm}{\mathrm{Homeo}}
\newcommand{\MCG}{\mathrm{Mod}}

\newcommand{\rayend}{\mathrm{end}}
\newcommand{\PSL}{\mathrm{PSL}}
\newcommand{\eurm}{\mathrm{eu}}
\newcommand{\rot}{\mathrm{rot}}


\newcommand{\defeq}{\vcentcolon=}




\title{Big Mapping Class Groups and rigidity of the Simple Circle}

\author{Danny Calegari}
\address{Department of Mathematics\\ University of Chicago\\ Chicago, Illinois, USA}
\email[D.~Calegari]{dannyc@math.uchicago.edu}

\author{Lvzhou Chen}
\address{Department of Mathematics\\ University of Chicago\\ Chicago, Illinois, USA}
\email[L.~Chen]{lzchen@math.uchicago.edu}

\begin{document}

\begin{abstract}
Let $\Gamma$ denote the mapping class group of the plane minus a Cantor set. 
We show that every action of $\Gamma$ on the circle is either trivial
or semi-conjugate to a unique minimal action on the so-called {\em simple circle}.
\end{abstract}

\maketitle

\tableofcontents

\section{Introduction}

Let $\Gamma$ denote the mapping class group of the plane minus a Cantor set. This group
and its subgroups arise naturally in dynamics e.g.\/ \cite{Calegari_planar, Calegari_complex}
and has been studied in detail by Juliette Bavard \cite{Bavard_hyperbolic} and
Bavard--Walker \cite{Bavard_Walker_1, Bavard_Walker_2}. It is the best
understood and most studied example of a so-called `big' mapping class group; see
e.g.\/ \cite{Aramayona_Fossas_Parlier, Durham_Fanoni_Vlamis, Patel_Vlamis}. 

Calegari \cite{Calegari_planar} constructed a faithful action of $\Gamma$ on the circle
by homeomorphisms. Bavard--Walker \cite{Bavard_Walker_1} constructed a different
action which is more closely related to the action of $\Gamma$ on the Gromov boundary
of the {\em Ray graph}; see \cite{Bavard_hyperbolic}. These actions are both constructed
via hyperbolic geometry but they are in fact different. It is natural to ask if they
are related, and more generally to try to classify actions of $\Gamma$ on the circle.
In this paper we give such a classification: there is a unique minimal action on the
so-called {\em simple circle}, and every nontrivial action is semiconjugate to this one.
In other words for every nontrivial action of $\Gamma$ on the circle there is a unique
minimal compact nonempty invariant subset, and after collapsing complementary intervals
to points, the action on the quotient circle is conjugate to the action on the simple
circle (possibly up to a change of orientation).

The description of the simple circle is closely related to Bavard--Walker's {\em conical circle}. 
Our analysis of the dynamics in this case sharpens some of the results
of \cite{Bavard_Walker_1, Bavard_Walker_2} and gives new proofs of others.

\medskip

It is worth remarking that the analogue of our main result is known for
certain mapping class groups of finite type. Let $\Gamma_{g,1}$ denote the
mapping class group of a surface of genus $g$ with 1 marked point. This group
acts (by point-pushing) on the ideal boundary of the fundamental group of the
surface; one calls this the {\em geometric action}. Mann--Wolff \cite{Mann_Wolff}
show that every nontrivial action of $\Gamma_{g,1}$ on the circle is semiconjugate
to the geometric action when $g>2$. Their proof and ours are quite dissimilar, but
there is some conceptual overlap.

\medskip

{\noindent \bf Statement of Results.}
In \S~2--3 we introduce, following Bavard--Walker \cite{Bavard_Walker_1}, the action of
$\Gamma$ on the conical circle.

Let $\Omega$ denote the plane minus a Cantor set. The {\em conical cover} $\Omega_C$
is the covering space associated to the $\Z$ subgroup of $\pi_1(\Omega)$ generated
by a loop around infinity. $\Omega_C$ is conformal to $\D-0$ and the 
{\em conical circle} $S^1_C$ is identified with $\partial \D$. The group $\Gamma$
acts naturally on this circle by homeomorphisms.

Points in $S^1_C$, thought of as endpoints, correspond to bi-infinite geodesics in a hyperbolic structure on
$\Omega_C$ `starting' at $\infty$. Distinguished amongst these are the subset of
{\em simple} geodesics, those whose image under the covering projection is embedded 
in $\Omega$. The simple geodesics fall into 3 classes:
the {\em short rays} $R$, the {\em lassos} $L$ and the {\em long rays} $X$.

The main theorem we prove in this section relates the dynamics on $S^1_C$ to the
topology of these geodesics:

\begin{conical_circle_theorem}
The set $R\cup X$ is a Cantor set in $S^1_C$ and is the unique minimal set for the
action of $\Gamma$ on $S^1_C$. Every complementary interval contains a unique lasso,
and all lassos arise this way. The set of boundary points of $R\cup X$ is exactly
the set of long rays that spiral around some simple geodesic loop in $\Omega$.
\end{conical_circle_theorem}

\medskip

In \S~4 we define the simple circle $S^1_S$ as the quotient of $S^1_C$ obtained
by collapsing complementary gaps to $R\cup X$. The action of $\Gamma$ on $S^1_S$ is
minimal, and has both countable and uncountable orbits. The countable orbits have
the following characterization:

\begin{countable_orbit_theorem}
A point in $S^1_C$ has a countable orbit if and only if the associated geodesic
$\gamma$ stays inside some subsurface $\Sigma \subset \Omega$ of finite type.
\end{countable_orbit_theorem}

\medskip

In \S~5 we give some examples and non-examples of subgroups of $\Gamma$. A
countable group is a subgroup of $\Gamma$ if and only if it is circularly orderable.
The situation for uncountable groups is more complicated: the abstract
group $S^1$ embeds in $\Gamma$, whereas $\PSL_2(\R)$ does not.

\medskip

\S~6 is the heart of the paper, and contains the main result:

\begin{rigidity_theorem}
Any homomorphism $\Gamma \to \Homeorm^+(S^1)$ is either trivial, or is semiconjugate
(possibly up to a change of orientation) to the action on the simple circle $S^1_S$.
\end{rigidity_theorem}

The argument depends on an analysis of certain special subgroups of $\Gamma$.
If $r \in R$ is a simple ray in $\Omega$, we denote by $\Gamma_r$ the stabilizer of $r$
in $\Gamma$, and by $\Gamma_{(r)}$ the subgroup of $\Gamma_r$
represented by homeomorphisms which are the identity in a neighborhood of the endpoint
of $r$.

The subgroups $\Gamma_r$, $\Gamma_{(r)}$ and their properties are part of the
essential structure of $\Gamma$, and these structural results should be of independent
interest.

The skeleton of the argument proceeds as follows.

\begin{enumerate}
\item{for any action of $\Gamma$ on the circle, each $\Gamma_r$ has a global fixed point;}\label{item: first step}
\item{for any two rays $r,s$ with distinct endpoints, $\Gamma_r$ and $\Gamma_s$
together generate $\Gamma$;}
\item{thus (with some work) one shows that either there is a global fixed point for
$\Gamma$, or there is a $\Gamma$-equivariant injective map $P$ from $R$ to $S^1$; and finally}
\item{the circular order on $P(R)$ is rigid.}
\end{enumerate}

Identifying $P(R)$ in a given circle with $R$ in the simple circle gives the 
semiconjugacy. The proof of (\ref{item: first step}) uses bounded cohomology, 
and a homological vanishing argument due to Mather.

\medskip

Low-dimensional (co-)homology, especially of certain subgroups of $\Gamma$, plays a
significant role in the proof of our main theorem, and it is evidently important 
to try to understand the homology of $\Gamma$ and some related groups. 
In an appendix we compute the (co-)homology in dimension 2 of $\Gammahat$, 
the mapping class group of the sphere minus a Cantor set 
(which we denote $\Omegahat$). The group $\Gammahat$ is 
(uniformly) perfect, so $H_1(\Gammahat;\Z)=H^1(\Gammahat;\Z)=0$. The main
result in the appendix is:

\begin{homology_theorem}
$H_2(\Gammahat;\Z)=\Z/2$ and $H^2(\Gammahat;\Z)=0$.
\end{homology_theorem}

\section{The conical circle}

Let $\Omega$ denote the plane minus a Cantor set.
This surface admits many different hyperbolic structures. The notions of conical cover and 
conical circle introduced below do not depend on the choice of hyperbolic structures.
For concreteness, choose a
Schottky subgroup of $\PSL_2(\C)$, and consider the conformal structure on the sphere minus
its limit set, then remove some point and hyperbolize the result. This hyperbolic structure
has the nice property that away from any neighborhood of the puncture (`infinity') the
geometry is precompact in the space of pointed metric spaces, though we do not make use
of this fact in the sequel.

The {\em conical cover} $\Omega_C$ is the covering space of $\Omega$ associated to the
$\Z$ subgroup of $\pi_1(\Omega)$ generated by a loop around infinity; See Figure \ref{fig: conical}. 
Geometrically, $\Omega_C$ is conformally equivalent to the punctured unit disk. The {\em conical circle}
$S^1_C$ is the boundary of the unit disk, and it may be identified in a natural way with
the space of proper bi-infinite geodesics in $\Omega_C$ with one end going out the
puncture. Informally, we call these {\em geodesic rays}. The identification with $S^1_C$
topologizes the space of such geodesic rays.

\begin{figure}
	\labellist
	\small 
	
	\pinlabel $\tilde{\infty}$ at 61 185
	\pinlabel $\Omega_C$ at 17 50
	\pinlabel $S^1_C$ at 105 -5
	
	\pinlabel $\infty$ at 353 185
	\pinlabel $\Omega$ at 300 40
	\endlabellist
	\centering
	\includegraphics[scale=0.7]{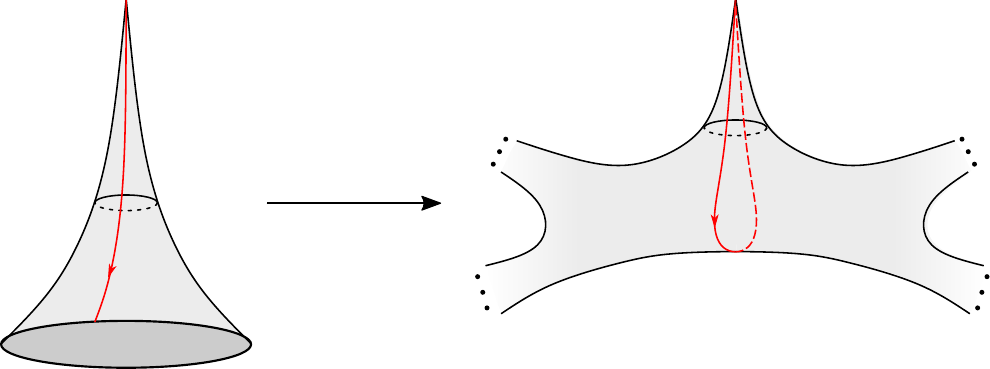}
	\caption{The conical cover with a geodesic ray projecting to a lasso}\label{fig: conical}
\end{figure}

The mapping class group $\Gamma$ acts on $\Omega$ by isotopy classes of 
homeomorphisms, and such homeomorphisms all have canonical lifts to $\Omega_C$, by
lifting the action in the obvious way near infinity. Although these
maps are not quasi-isometries in the hyperbolic metric, 
they nevertheless extend {\em continuously} to homeomorphisms
of the conical circle, and these extensions depend only on the mapping class. We thus
obtain a representation $\Gamma \to \Homeorm^+(S^1_C)$.

The conical circle and this action are introduced in \cite{Bavard_Walker_1}, although
another closely related action was introduced earlier in \cite{Calegari_planar}.

\section{The simple set}

A geodesic ray $\gamma$ is {\em simple} if it is embedded in $\Omega$. Such simple geodesics
fall into three natural classes:
\begin{enumerate}
\item{Proper simple geodesics that run from $\infty$ to a point in the Cantor set; 
these are called {\em short rays}.}
\item{Proper simple geodesics that run from $\infty$ to $\infty$; these are called
{\em lassos}.}
\item{Simple geodesics that start at $\infty$ and enter $\Omega$ but are not
proper (and therefore recur somewhere); these are called {\em long rays}.}
\end{enumerate}

We denote the set of short rays by $R$, the set of lassos by $L$ and the set of long
rays by $X$. All our geodesics are {\em oriented}; for a short or long ray
this is unambiguous, since $\infty$ is the `initial point'. But for a lasso we need to
choose which end of the geodesic is `initial' and which end is `terminal'. We think
of $R,L,X$ as rays in $\Omega$, and at the same time as points in $S^1_C$. The
{\em simple set} is the union $R \cup L \cup X \subset S^1_C$.

Note that $\Gamma$ acts transitively on the set of short rays, and also on the set of
lassos; i.e.\/ $R$ and $L$ are two orbits of $\Gamma$. Also note that $R$ is uncountable,
whereas $L$ is countable.

We first prove some elementary facts about the dynamics of $\Gamma$.

\begin{lemma}[\cite{Bavard_Walker_1}]
The simple set is closed.
\end{lemma}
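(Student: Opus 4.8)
The plan is to show that a limit of simple geodesic rays is again simple, i.e.\ embedded in $\Omega$. Equip $\Omega_C$ with the fixed hyperbolic structure. A point $p_n \in S^1_C$ in the simple set corresponds to a geodesic ray $\gamma_n$ in $\Omega_C$ whose projection $\bar\gamma_n$ to $\Omega$ is embedded (injective as a map from $\R$ or a ray, allowing the two ends of a lasso to limit on $\infty$). Suppose $p_n \to p$ in $S^1_C$; I want to prove $p$ lies in $R \cup L \cup X$, that is, the geodesic ray $\gamma$ at $p$ projects to an embedded geodesic $\bar\gamma$ in $\Omega$.

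The key step is a compactness/continuity argument: the map sending a point of $S^1_C$ to the corresponding bi-infinite geodesic, viewed in the geometric (Hausdorff-on-compacta, or uniform-on-compacta after suitable reparametrization) topology, is continuous. Concretely, since all the $\gamma_n$ share one end going out the puncture $\tilde\infty$, we can parametrize them by arclength starting from a fixed horocycle around $\tilde\infty$; as $p_n \to p$, the $\gamma_n$ converge uniformly on compact subsets of $\R$ to $\gamma$, and their projections $\bar\gamma_n$ converge uniformly on compacta to $\bar\gamma$. Now suppose for contradiction that $\bar\gamma$ is not embedded: then there are two distinct parameter values $s < t$ (with $(s,t)$ not the full parameter interval, in the lasso-degenerate sense) such that $\bar\gamma(s) = \bar\gamma(t) =: x \in \Omega$ and the two local branches of $\bar\gamma$ at $x$ are transverse (they cannot be tangent, since two geodesics through a point with the same tangent line coincide, and $\bar\gamma$ is a single geodesic so a self-tangency would force periodicity, hence still an embedded image after passing to the primitive—this case needs a small separate remark). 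Transversality is an open condition and persists under uniform convergence on the compact parameter window $[s,t]$, so $\bar\gamma_n$ has a transverse self-intersection for all large $n$, contradicting that $\bar\gamma_n$ is embedded.

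The one case requiring care—and the main obstacle—is controlling what happens near the puncture: a priori the limit ray $\gamma$ could fail to be proper in a way that is qualitatively different from a long ray, or a sequence of short rays could converge to something whose projected image is not closed in $\Omega$ and thus accumulates on itself near $\infty$ in a non-transverse way. To handle this I would use that each $\bar\gamma_n$ leaves every compact subset of $\Omega$ near its initial end in a controlled fashion (it enters the cusp region monotonically once and for all), and that this behavior is uniform in $n$ because it is dictated by the geometry near the puncture alone; hence the only self-intersections that can appear in the limit occur inside a fixed compact part of $\Omega$, where the transversality argument above applies. A self-intersection "at infinity" would manifest as $\bar\gamma$ being a recurrent geodesic that limits onto $\infty$, but then $\gamma$ is a long ray by definition (or the limiting $p$ coincides with the puncture direction, which is not a point of $S^1_C$), so $p$ still lies in the simple set. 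Assembling these observations shows $\bar\gamma$ is embedded, so $p \in R\cup L \cup X$, and the simple set is closed.
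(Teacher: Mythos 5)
Your proof is correct and rests on the same mechanism as the paper's: a transverse self-intersection of a geodesic is a stable configuration under small perturbation. The paper states this in the contrapositive (the complement of the simple set is open, because the first self-intersection of a non-simple geodesic survives a small perturbation of the endpoint on $S^1_C$), while you argue directly that a limit of embedded geodesics is embedded; these are the same argument read in opposite directions. The observation that a self-tangency is impossible (two distinct complete geodesics through a point of a hyperbolic surface cannot be tangent) is correct and does make the ``first self-intersection is transverse, hence persists'' step rigorous. However, your final paragraph about ``self-intersections at infinity'' is a false worry: a self-intersection of $\bar\gamma$ is by definition at a point $x\in\Omega$, hence in a compact part of $\Omega$, and the uniform-on-compacta convergence you already established handles it. Whether $\bar\gamma$ is proper, accumulates on itself, or limits into the cusp is irrelevant to the statement --- the simple set is just the set of embedded geodesic rays, and embeddedness fails only at an actual point of $\Omega$. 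That paragraph should be cut; the rest is a correct (if more verbose) version of the paper's two-sentence proof.
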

\begin{proof}
Let $\gamma$ be a geodesic ray from infinity. If it intersects itself, the first point
of intersection is stable under a small perturbation; thus the complement of the simple
set is open. 
\end{proof}

\begin{lemma}
$R$ is contained in the closure of every orbit. Thus the closure of $R$ is the unique
minimal set.
\end{lemma}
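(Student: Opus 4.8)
The plan is to prove directly that the orbit of every point of $S^1_C$ accumulates at some short ray; minimality of $\overline{R}$ then follows formally. Note first that $R$ is a single $\Gamma$-orbit, so $\overline{R}$ is a nonempty closed $\Gamma$-invariant set. If $R\subseteq\overline{\Gamma p}$ for every $p\in S^1_C$, then any nonempty closed $\Gamma$-invariant set $M$ contains some $p$, hence $\overline{\Gamma p}$, hence $\overline{R}$; so $\overline{R}$ is the unique minimal set (and, applying this to its subsets, is itself minimal). Since moreover $\overline{\Gamma p}$ is $\Gamma$-invariant and $\Gamma$ acts transitively on $R$, it is enough to find a single short ray in each orbit closure. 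So everything reduces to the following: given an arbitrary geodesic ray $\gamma$ representing a point $p\in S^1_C$ and a short ray $r$ from $\infty$ to a Cantor point $c$, produce $g_k\in\Gamma$ with $g_k\cdot p\to r$ in $S^1_C$.

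I would construct the $g_k$ by point-pushing the marked point $\infty$. Fix a nested sequence of closed disks $D_1\supset D_2\supset\cdots$ in the plane with $c$ in the interior of each $D_k$, $\bigcap_k D_k=\{c\}$ and $\infty\notin D_1$, chosen so that every $\delta_k:=\partial D_k$ is an essential simple closed curve of $\Omega$ (it bounds a clopen piece $C_k\ni c$ of the Cantor set, which is infinite, so $\delta_k$ does not bound a disk); since $r$ converges to $c$ it enters each $D_k$. Let $w_k$ be the loop based at $\infty$ which follows $r$ from $\infty$ up to $\delta_k$, runs once around $\delta_k$, and returns to $\infty$ along $r$. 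Then $w_k$ avoids the Cantor set, and $[w_k]$ is conjugate to $[\delta_k]\neq 1$ in $\pi_1(\Omegahat,\infty)$, so the point-push $g_k\in\Gamma$ of $\infty$ along $w_k$ is a well-defined nontrivial element, by the Birman exact sequence for the marked point $\infty$. By construction $g_k\cdot p$ is the point of $S^1_C$ represented by a geodesic ray whose class (near $\infty$) is that of the arc $w_k*\gamma$.

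That geodesic ray leaves $\infty$ and fellow-travels the initial segment of $r$ all the way to $\delta_k$: the loop around $\delta_k$ and the tail $\gamma$ are appended only after this initial run, and nothing after it can shorten it away. The hyperbolic length of this common segment tends to $\infty$ with $k$, since $c$ is an end of the complete hyperbolic surface $\Omega$ and so lies at infinite distance, forcing $r$ to travel arbitrarily far before reaching $\delta_k$. Lifting to the conical cover, the canonical lifts of $g_k\cdot p$ and of $r$ leave the cusp in the same way and then both trace the lift of this segment, so they agree for hyperbolic length tending to $\infty$; by the fellow-traveling description of the topology of $S^1_C=\partial\D$ this forces $g_k\cdot p\to r$, and we are done. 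I expect the last paragraph to be the delicate point: one must check, uniformly in $\gamma$, that the geodesic in the class $w_k*\gamma$ really tracks $r$ up to $\delta_k$ for the asserted length — a thin-triangles, no-backtracking estimate — and one must convert ``tracks $r$ after leaving the cusp'' into genuine convergence in $S^1_C$ using the precise identification of the conical circle with geodesic rays in $\Omega_C$. The rest is bookkeeping.
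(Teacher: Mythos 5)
Your approach is genuinely different from the paper's. The paper picks two dividing curves $\alpha_1,\alpha_2$ whose interior Cantor pieces $K_1,K_2$ overlap (so that every geodesic ray from $\infty$ must cross one of them, and is \emph{simple} until it does), and then applies mapping classes $\phi_n$ shrinking the relevant piece $K_j$ to a point $k$: the simple initial segment of $\phi_n(\gamma)$ is dragged into an ever-smaller neighborhood of $k$, forcing $\phi_n(\gamma)\to r_k$. You instead point-push $\infty$ along loops $w_k$ winding around the prescribed endpoint $\rayend(r)$, which has the merit of targeting an arbitrary fixed short ray directly and never invoking simplicity.

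However, the gap you flag is a genuine gap, not just bookkeeping. In the universal cover $\Hbb^2$, with $\tilde\infty$ the cusp, $\tilde r$ a lift of $r$ with ideal endpoint $\tilde c$, and $h_{w_k}\in\pi_1(\Omega)$ the hyperbolic element represented by $w_k$, the ray $g_k\cdot p$ is (up to orientation conventions on the point-push) the geodesic from $\tilde\infty$ to $h_{w_k}(\tilde\gamma(\infty))$. The axis of $h_{w_k}$ is the lift of $\delta_k^*$ meeting $\tilde r$, and both its ideal endpoints converge to $\tilde c$; what you need is that $h_{w_k}(\tilde\gamma(\infty))\to\tilde c$, and that is \emph{not} a formal consequence. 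A hyperbolic isometry whose axis has shrunk into a tiny neighborhood of $\tilde c$ need not carry the fixed boundary point $\tilde\gamma(\infty)$ into that neighborhood if its translation length $\ell(\delta_k^*)$ is simultaneously tiny: writing $h_k$ for the scale of the axis and $\lambda_k=e^{\ell(\delta_k^*)}$, the image is of order $h_k(\lambda_k+1)/(\lambda_k-1)$, which stays bounded away from $\tilde c$ when $\ell(\delta_k^*)$ decays at the rate $h_k$. So "nothing after it can shorten it away" requires a lower bound on the lengths $\ell(\delta_k^*)$, i.e.\ on the injectivity radius of $\Omega$ away from the cusp. This holds for the precompact Schottky structure the paper fixes in \S2 but is not automatic for an arbitrary complete hyperbolic structure, and you neither invoke that structure nor supply a substitute. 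The paper's proof sidesteps this entirely because the shrinking $\phi_n$ produces rays whose \emph{initial simple segments} are forced into a prescribed shrinking neighborhood; no comparison of geodesic lengths to distances is needed.
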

\begin{proof}
We can separate the Cantor set from infinity by two simple closed geodesics
$\alpha_1$, $\alpha_2$ in many different ways. For instance, let each $\alpha_j$
separate a proper subset $K_j$ of the Cantor set from infinity, so that
$K_1 \cap K_2$ is nonempty. Any geodesic ray $\gamma$ must be simple
up to the first time it crosses one of the $\alpha_j$; without loss of generality
let it cross $\alpha_1$. Now we can take a sequence of mapping classes $\phi_n$ which
shrink $K_1$ homothetically down to some point $k$ in the Cantor set and
the images $\phi_n(\gamma)$ converge to a short ray from $\infty$ to $k$. See Figure \ref{fig: shrink}.
\end{proof}

\begin{figure}
	\labellist
	\small 
	
	\pinlabel $\infty$ at 150 150
	\pinlabel $k$ at 17 50
	\pinlabel $K_1$ at 85 20
	\pinlabel $\gamma$ at 50 100	
	\pinlabel $\alpha_1$ at 10 10
	\pinlabel $K_2$ at 185 65
	\pinlabel $\alpha_2$ at 250 10
	
	\pinlabel $\infty$ at 463 150
	\pinlabel $k$ at 330 50
	\pinlabel $\phi(K_1)$ at 360 15
	\pinlabel $\phi(\gamma)$ at 360 100	
	\pinlabel $\phi(\alpha_1)$ at 300 30
	\pinlabel $\phi(K_2)$ at 460 65
	\pinlabel $\phi(\alpha_2)$ at 575 10
	\endlabellist
	\centering
	\includegraphics[scale=0.7]{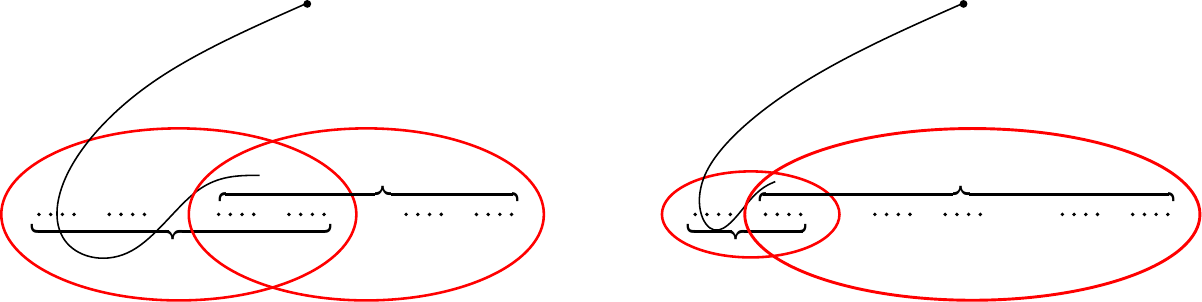}
	\caption{A homeomorphism $\phi$ shrinking Cantor subset $K_1$ and expanding $K_2$ such that $\phi^n$ shrinks $K_1$ to $k$ and $\phi^n(\gamma$) converges to a short ray to $k$}\label{fig: shrink}
\end{figure}

\begin{lemma}
The simple set is nowhere dense.
\end{lemma}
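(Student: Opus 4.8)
The plan is to show that the complement of the simple set in $S^1_C$ is dense, which combined with the preceding lemma (that the simple set is closed) gives nowhere density. The key observation is that a generic geodesic ray from $\infty$ should be non-simple, i.e.\/ should self-intersect. First I would take an arbitrary open interval $I \subset S^1_C$, corresponding to an open family of geodesic rays; I want to produce a non-simple ray with endpoint in $I$. I would choose a short ray $r \in R$ whose endpoint lies in $I$ (this is possible because $R$ is contained in the closure of every orbit, hence is dense — or more directly because we can steer rays into $I$ using the group action as in the previous lemma), and then perturb $r$ near its endpoint in the Cantor set: since the Cantor set has no isolated points, there are nearby short rays, and I can splice together a ray that goes out toward the endpoint of $r$, loops around a small Cantor subset near that endpoint, and comes back, deliberately crossing itself. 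The resulting non-simple geodesic ray has endpoint arbitrarily close to that of $r$, hence lies in $I$.

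Alternatively, and perhaps more cleanly, I would argue as follows. Fix a simple closed geodesic $\alpha$ separating a proper Cantor subset $K$ from $\infty$. Rays that cross $\alpha$ and then cross it again (in the same direction, say) while the subarc between the two crossings is non-trivial in $\Omega \setminus K$ will self-intersect, or can be arranged to self-intersect. The set of such rays is open (by the openness argument: the relevant transverse intersections with $\alpha$ and the self-intersection point are all stable under perturbation). So it suffices to check that every interval $I$ contains a ray of this type. Given $I$, use mapping classes supported near $\infty$, or the shrinking homeomorphisms $\phi_n$ from the previous lemma's proof, to move a chosen non-simple ray so that its endpoint enters $I$; since the action is by homeomorphisms of $S^1_C$ and maps non-simple rays to non-simple rays, the image is a non-simple ray with endpoint in $I$.

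The main obstacle, I expect, is making precise the claim that we can always find a non-simple ray whose endpoint lands in an arbitrary prescribed interval $I$ — equivalently, that the non-simple rays are dense. One clean way: non-simple rays form a $\Gamma$-invariant set, and its closure is $\Gamma$-invariant and closed; if it were not all of $S^1_C$ its complement would be a nonempty open $\Gamma$-invariant set, so its closure would contain the minimal set $\overline{R}$ by the previous lemma, forcing all short rays to be limits of non-simple rays — which is exactly what we want, provided we exhibit at least one non-simple ray and know the non-simple set is nonempty (clear: wind a ray around twice). Thus the whole argument reduces to: (i) non-simple rays exist; (ii) by minimality their closure contains $R$; (iii) combined with the fact that $R \cup L \cup X$ is closed, the simple set contains no interval, hence (being closed) is nowhere dense. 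Step (ii) is where I'd be most careful, since I must make sure that "closure of the non-simple set contains $R$" genuinely follows from the minimality lemma as stated, rather than merely containing some orbit closure.
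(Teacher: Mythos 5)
Your reduction---since the simple set is closed, it suffices to show the non-simple rays are dense---is correct, and your step (ii) is also correct and is in fact the first part of the paper's own proof: the non-simple set $N$ is $\Gamma$-invariant and nonempty, so $\overline{N}$ contains the closure of some orbit, which by the previous lemma contains $R$. (Your worry about step (ii) is therefore unfounded.) However, your first approach has a basic error: $R$ is \emph{not} dense in $S^1_C$. It is contained in the simple set, the very set you are trying to show is nowhere dense, and the minimality lemma says only that $\overline{R}$ is the unique minimal set---a proper Cantor subset, as the next theorem shows. So you cannot land a short ray in an arbitrary interval $I$, nor steer a fixed non-simple ray into $I$ with the group action, since its orbit accumulates only on $\overline{R}$.

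The genuine gap is your step (iii). Knowing $R \subseteq \overline{N}$ and that the simple set is closed does not show the simple set has empty interior, because you have said nothing about lassos $L$ or long rays $X$. If $U$ is the (open, $\Gamma$-invariant) interior of the simple set, minimality gives only $\overline{R} \subseteq \overline{U}$, which is not a contradiction; an interval of long rays accumulating on $\overline{R}$ is not ruled out by anything you have written. The paper closes this gap with two geometric observations: a small perturbation of a lasso is a geodesic running deep into the cusp at infinity without escaping, and any such geodesic must self-intersect, so lassos are isolated in the simple set; and a long ray, being non-proper, recurs and hence comes arbitrarily close to itself infinitely often, so a tiny perturbation produces a self-intersection. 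Together with the short-ray case, these show every simple geodesic is a limit of non-simple geodesics, which is what is needed.
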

\begin{proof}
Any proper simple geodesic is a short ray or a lasso. We have seen that every short ray
is a limit of every orbit; in particular, a short ray can be approximated by non-simple 
geodesics. A small perturbation of a lasso is a geodesic running deep into the cusp
near infinity; any such geodesic which does not limit to infinity but goes sufficiently
deep must self-intersect. It follows that lassos are {\em isolated} in the simple set.
Finally, a non-proper geodesic $\gamma$ must recur somewhere, and therefore comes 
arbitrarily close to itself infinitely often; 
thus it may be perturbed very slightly to produce a self-intersection.
\end{proof}

We are now able to prove the main theorem of this section. 

\begin{thm}[Conical Circle]\label{theorem:conical_action}
The set $R \cup X$ is a Cantor set in $S^1_C$ and is the unique minimal set for the
action of $\Gamma$ on $S^1_C$. Every complementary interval contains a unique lasso,
and all lassos arise this way. The set of boundary points of $R\cup X$ is exactly 
the set of long rays that spiral around some simple geodesic loop in $\Omega$.
\end{thm}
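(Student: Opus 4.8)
\emph{Overview.} The plan is to identify $R\cup X$ with the minimal set $\overline R$ produced by the previous lemma, read off its topology, and then analyze the complementary intervals one at a time. First, $R\cup X$ is closed: a limit of points of $R\cup X$ lies in the simple set (which is closed), and it is plainly not an isolated point of the simple set, hence not a lasso, hence it lies in $R\cup X$. Since $R\cup X$ is also a union of $\Gamma$-orbits it is $\Gamma$-invariant, so $\overline R\subseteq R\cup X$; and no lasso lies in $\overline R$, again because lassos are isolated in the simple set, so this inclusion is an equality exactly when every long ray is a limit of short rays. Next, the topological type of $\overline R$: $R$ is a single $\Gamma$-orbit and is uncountable, so by homogeneity it has no isolated points and $\overline R$ is perfect; it is closed and nonempty, and being contained in the nowhere dense simple set it is nowhere dense in $S^1_C$, hence totally disconnected (a nondegenerate closed connected subset of a circle has nonempty interior). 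So $\overline R$ is a Cantor set; in particular its complement has only countably many components, which I call complementary intervals.

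\emph{The crux: $X\subseteq\overline R$.} Let $\gamma$ be a long ray; I want short rays converging to $\gamma$ in $S^1_C$, that is, short rays that fellow-travel $\gamma$ for longer and longer. Since $\gamma$ is simple and recurrent, its closure in $\Omega$ is a compact geodesic lamination carried by a subsurface it fills, so $\gamma$ returns infinitely often to within bounded distance either of a cusp of $\Omega$ or of an essential simple closed geodesic. Truncate $\gamma$ at such a return time $t$: the initial segment $\gamma|_{[0,t]}$ is an embedded geodesic arc, and from its endpoint one can splice on an embedded arc of \emph{bounded} length --- running straight into the nearby cusp, or across the nearby simple closed geodesic and then into the sub-Cantor set it bounds --- disjoint from $\gamma|_{[0,t]}$ and ending on the Cantor set. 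The geodesic representative of the resulting proper arc is a short ray $r_t$, and since $\gamma|_{[0,t]}$ is already geodesic while the spliced arc has bounded length, $r_t$ fellow-travels $\gamma$ for a distance comparable to $t$; hence $r_t\to\gamma$. Therefore $X\subseteq\overline R$, so $R\cup X=\overline R$ is the unique minimal set and is a Cantor set.

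\emph{Complementary intervals and lassos.} Each lasso is an isolated point of the simple set, hence has a neighborhood meeting the simple set only in itself; as the lasso is not in $R\cup X$, that neighborhood misses $R\cup X$, so the lasso lies in a complementary interval and is the only simple geodesic there. It remains to see that every complementary interval arises this way, contains exactly one lasso, and to identify its endpoints. An endpoint lies in $R\cup X=\overline R$ and cannot be a short ray: the short rays form a single uncountable $\Gamma$-orbit while there are only countably many endpoints, so if one short ray were an endpoint, all would be. Likewise a recurrent long ray not asymptotic to a single closed geodesic is approximated on both sides by simple rays --- apply the splicing of the previous step after a long stretch of recurrence, peeling off to either side --- so it cannot be a one-sided boundary point of a Cantor set; hence every endpoint is a long ray spiralling onto a simple closed geodesic. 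Finally, analyzing the geodesic rays that fellow-travel such a spiralling ray and then peel off, and using that a small perturbation of a lasso runs deep into the cusp at infinity and self-intersects, one finds that the two endpoints of a given complementary interval spiral onto a \emph{common} simple closed geodesic $c$, that the lasso around $c$ lies in that interval, and that it is the only lasso there. This yields a bijection between complementary intervals, lassos, and essential simple closed geodesics in $\Omega$, which proves the remaining assertions --- in particular that the set of boundary points of $R\cup X$ is exactly the set of long rays spiralling around a simple geodesic loop.

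\emph{Main obstacle.} The soft inputs (the earlier lemmas, homogeneity, the point-set topology of $S^1$) dispatch most of the statement; the real work is the approximation in the second step, and its variant in the third. Converting the bare fact ``$\gamma$ recurs'' into an \emph{effective} approximation of $\gamma$ by short rays requires locating the times when $\gamma$ comes close to the boundary of the subsurface it fills and bounding the length of the splicing arc uniformly, so that geodesic straightening cannot shortcut across the long initial segment and destroy the fellow-travelling; this is the step I expect to be the most delicate.
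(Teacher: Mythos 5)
Your proposal takes a genuinely different route from the paper's, and the soft parts --- the Cantor-set topology of $\overline R$ via homogeneity, uncountability and nowhere-density, and the countability argument ruling out short rays as complementary-interval endpoints (which the paper does not use) --- are clean and correct. But the crux step, which you yourself flag as the delicate one, has a real gap. You want to show directly that every long ray $\gamma$ is a limit of short rays, and for this you assert that the closure of $\gamma$ in $\Omega$ is a compact geodesic lamination filling a (finite-type) subsurface, so that $\gamma$ returns within bounded distance of a fixed simple closed geodesic (or cusp) infinitely often. Neither assertion is justified. A long ray is only required to be non-proper, i.e.\ to recur to some compact set; in the infinite-type surface $\Omega$ there is no reason its accumulation set should be compact or fill a finite-type subsurface (the Countable Orbit Theorem of \S~4 in fact requires many long rays not to be contained in any finite-type subsurface). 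The ``cusp'' fallback does not produce short rays: the only cusp is at $\infty$, and splicing an arc into it yields a lasso. And even granting a simple closed geodesic $c$ that $\gamma$ repeatedly approaches, the spliced arc from $\gamma(t)$ across $c$ to a Cantor-set point must be disjoint from the entire initial segment $\gamma|_{[0,t]}$, which may already pass through the disk bounded by $c$ many times; controlling the length of that arc uniformly in $t$ is not automatic.

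The paper's argument is organized precisely to avoid all of this. It perturbs an arbitrary non-simple geodesic $\gamma$ near its first self-intersection, and proves (by an explicit computation in $\mathbb{H}^2$, using monotonicity of the hyperbolic isometry $g_\tau$ on $\partial\mathbb{H}^2$) that the complementary interval of the simple set containing $\gamma$ has a lasso for one endpoint and a long spiral for the other, and nothing else is a boundary point. The truncate-and-splice argument is then carried out only for long spirals, where the spiraling structure makes it work: the spliced arc $\delta$ is taken to spiral around $\alpha$ in the opposite sense, guaranteeing disjointness from $\gamma|_{[0,t]}$, and the lift of the endpoint of the resulting short ray is translated by increasing powers of $g_\tau$, hence converges to the fixed point of $g_\tau$, which is exactly the endpoint of $\gamma$. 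All other long rays are two-sided limits of these long spirals, so $R\cup X=\overline R$ follows without ever approximating a general long ray directly. Your third paragraph invokes the same unjustified recurrence claim again (to rule out non-spiraling long rays as interval endpoints and to match the two endpoints of an interval to a common closed geodesic and a unique lasso); these are exactly the assertions that the paper's perturbation-in-$\mathbb{H}^2$ argument establishes and that the proposal, as written, leaves unproved.
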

\begin{proof}

\begin{figure}
	\labellist
	\small 
	
	\pinlabel $\infty$ at 145 130
	\pinlabel $p$ at 70 60
	\pinlabel $\gamma$ at 95 100
	\pinlabel $\infty$ at 458 130
	\endlabellist
	\centering
	\includegraphics[scale=0.75]{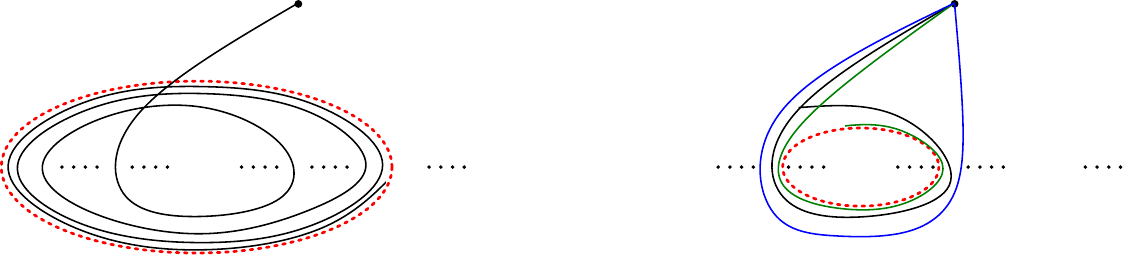}
	\caption{The left figure depicts a non-simple geodesic $\gamma$ and its first self-intersection $p$. 
		The right figure shows the perturbations of $\gamma$ in two directions to a lasso and a long ray 
		spiraling around the unique geodesic loop isotopic to the `loop' part of $\gamma$ up to $p$}\label{fig: perturb}
\end{figure}

\begin{figure}
	\labellist
	\small 
	
	\pinlabel $\infty$ at 175 250
	\pinlabel $\gamma$ at 90 70
	\pinlabel $\alpha$ at 90 155
	
	\pinlabel $\tilde{\infty}_2$ at 445 285
	\pinlabel $\tilde{\infty}_1$ at 532 217
	\pinlabel $\tau$ at 460 70
	\pinlabel $D$ at 425 120
	\pinlabel $\gamma_2$ at 395 90
	\pinlabel $\gamma_1$ at 332 210
	\pinlabel $\gamma'_2$ at 322 120
	\pinlabel $\gamma'_1$ at 412 255
	\pinlabel $\ell$ at 470 225
	\pinlabel $x$ at 480 167
	\endlabellist
	\centering
	\includegraphics[scale=0.7]{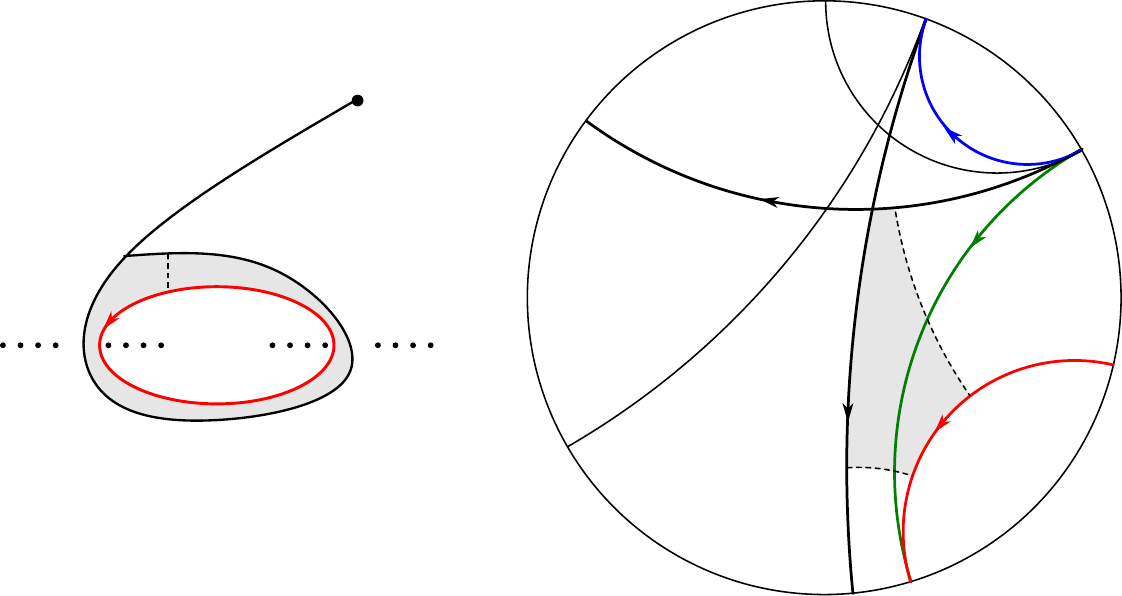}
	\caption{Pushing $\gamma_1$ to its right and left produces geodesics $\ell$ and $x$ projecting 
		to a lasso and a long spiral in $\Omega$. All smaller perturbations are not simple in $\Omega$.}\label{fig: perturbhyp}
\end{figure}

Let $\gamma$ be a non-simple geodesic. Since it begins as an embedded geodesic ray from
$\infty$ it has a first point $p$ such that the sub-geodesic from $\infty$ to $p$ self-intersects.
The geodesic up to the point $p$ looks like the letter $\rho$ embedded in $\Omega$; 
see Figure \ref{fig: perturb}. As we perturb $\gamma$, this first point of intersection
slides up and down the `stem' of the $\rho$. We can push the intersection point
in one direction all the way to $\infty$ and obtain a lasso as the limit. 
Or we can push in the other direction around and around so that
it spirals onto the unique simple geodesic which is in the isotopy class of the `loop' of the $\rho$. 
See the right part of Figure \ref{fig: perturb} and Figure \ref{fig: complementary}.
Thus every lasso is a boundary point on both sides of a complementary interval to the simple set, and 
all complementary intervals arise this way; and the other boundary point of a complementary interval is
a long ray which spirals around a simple geodesic. Let's call this kind of long ray a {\em long spiral}.

Formally, one can see this pushing on the universal cover, depicted in Figure \ref{fig: perturbhyp}.
In $\Omega$, there is an annular region between the `loop' of the $\rho$ and the geodesic loop homotopic to it. 
Pick an arbitrary geodesic arc $\alpha$ to cut this region open and lift it to the universal cover. Then the lifted
region $D$ witnesses a lift $\tau$ of the geodesic loop and two lifts $\gamma_1,\gamma_2$ of $\gamma$. Moreover, the
hyperbolic element $g_\tau$ representing the image of $\tau$ in $\Omega$ oriented as in the figure takes $\gamma_1$ to
$\gamma_2$. Then the geodesics in blue and green are obtained by pushing $\gamma_1$ to its right and left, 
projecting to the lasso and long spiral respectively. 
To see that any smaller perturbation $\gamma'_1$ of $\gamma_1$ has non-simple image in $\Omega$, note that 
$g_\tau$ takes the sector  between $\gamma_1$ and $\gamma'_1$ to the one between $\gamma_2$ and 
$\gamma'_2\defeq g_\tau \gamma'_1$. The monotonicity of the $g_\tau$-action on the boundary $\partial \mathbb{H}^2$ implies that 
$\gamma'_1$ and $\gamma'_2$ must intersect as in the figure, and thus the projection of $\gamma'_1$ in $\Omega$ is not simple.

\begin{figure}
	\labellist
	\small 
	
	\pinlabel $\infty$ at 72 103
	\pinlabel $K_1$ at -5 20
	\pinlabel $K_2$ at 150 20
	\pinlabel $L$ at 72 10
	\endlabellist
	\centering
	\includegraphics[scale=1]{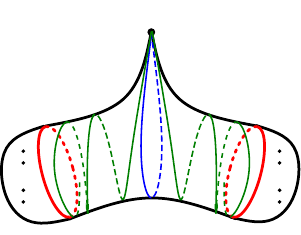}
	\caption{A lasso $L$ separating the Cantor set into $K_1\sqcup K_2$; the two long spirals corresponding
		 to the endpoints of the complementary interval of $S\cup X$ containing $L$}\label{fig: complementary}
\end{figure}

A long spiral $\gamma$ spirals around a simple geodesic $\alpha$, and this geodesic
$\alpha$ separates $\infty$ from some proper subset $K$ of the Cantor set. Pick
$k \in K$ and a simple arc from $k$ to $\alpha$, and spiral this arc around $\alpha$
in the opposite direction to the spiralling of $\gamma$; call the result $\delta$. 
We may truncate $\gamma$ after some finite amount of spiralling around $\alpha$, then
join it to $\delta$ to produce the isotopy class of a short ray. Such short rays
approximate $\gamma$ in $R \cup X$. Consequently every long spiral is in the closure of
$R$, and is not isolated in the simple set. Since no other long ray is a
boundary point of a complementary interval to the simple set, 
it follows that $R \cup X$ is a Cantor set, whose
complementary intervals each contain a unique lasso, and all lassos arise this way.

It remains to show that $R \cup X$ is minimal, equivalently that it is equal to the
closure of $R$. We have already seen that boundary points of $R\cup X$ are in the
closure of $R$; these are the long spirals. But a long ray $\gamma$ which is not a
boundary point is a limit on {\em both sides} of boundary points, which are all
limits (on one side) of short rays. Thus every long ray which is not a long spiral is a
limit of short rays from both sides, and is therefore contained in the closure of $R$.
This completes the proof.
\end{proof}

\section{Countable orbits and the simple circle}

Although $\Gamma$ is uncountable, it has several countable
orbits; the lasso set for instance. The next theorem characterizes these countable orbits.

\begin{thm}[Countable Orbit]\label{theorem:countable_orbit}
A point in $S^1_C$ has a countable orbit if and only if the associated geodesic $\gamma$ stays
inside some subsurface $\Sigma \subset \Omega$ of finite type.
\end{thm}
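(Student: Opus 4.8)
The plan is to prove both directions, with the ``if'' direction being essentially bookkeeping and the ``only if'' direction carrying the real content. For the ``if'' direction: suppose $\gamma$ stays inside a finite-type subsurface $\Sigma \subset \Omega$. The stabilizer $\Gamma_\Sigma$ of (the isotopy class of) $\Sigma$ maps onto the mapping class group of $\Sigma$ (rel boundary, with the appropriate marked-point data) by a surjection with kernel acting trivially on the part of $S^1_C$ coming from geodesics inside $\Sigma$. Since $\Sigma$ is finite type, $\mathrm{Mod}(\Sigma)$ is countable, and $\Gamma$ acts transitively on the countable set of isotopy classes of subsurfaces homeomorphic to a given finite-type model (this uses that the Cantor set has a ``homogeneous'' mapping class group: $\Gamma$ acts transitively on, say, finite unions of disjoint clopen pieces of the Cantor set realized by simple closed curves). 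Combining: the $\Gamma$-orbit of $\gamma$ decomposes as a countable union over isotopy classes of $\Sigma$ of the countable $\mathrm{Mod}(\Sigma)$-orbits, hence is countable.

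For the ``only if'' direction, the key idea is a \emph{compactness-of-intersection} argument. Suppose $\gamma$ does not stay inside any finite-type subsurface; I want to produce uncountably many distinct translates of $\gamma$. First reduce to the case where $\gamma$ is simple: if $\gamma$ is non-simple, it runs embedded up to a first self-intersection point $p$ and then the ``tail'' after $p$ is still a geodesic ray which, if it stayed in a finite-type subsurface, would force $\gamma$ itself to stay in a (slightly enlarged) finite-type subsurface; so the tail is also ``large'', and it suffices to handle the simple case, or one works directly with the combinatorics of how $\gamma$ crosses a fixed exhaustion. So assume $\gamma$ is simple; then $\gamma \in R \cup L \cup X$. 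If $\gamma \in R$ or $\gamma \in L$, the orbit is uncountable ($R$ is uncountable and is a single orbit; but wait --- a short ray \emph{does} stay in a finite-type subsurface, namely a once-punctured disk around its endpoint union a collar, so this case should not occur under our hypothesis, and similarly a lasso stays in a finite-type subsurface --- good, so these cases are vacuous and the hypothesis forces $\gamma \in X$, a long ray). Now fix a hyperbolic exhaustion $\Sigma_1 \subset \Sigma_2 \subset \cdots$ of $\Omega$ by finite-type subsurfaces with geodesic boundary. Because $\gamma$ is not contained in any finite-type subsurface, for every $n$ the geodesic $\gamma$ must eventually leave $\Sigma_n$; more precisely, the sequence of subsurfaces $\Sigma_n$ ``filled'' by longer and longer initial segments of $\gamma$ is strictly increasing infinitely often, so $\gamma$ records an infinite amount of topological information.

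The heart of the argument is then a \emph{coding}: I will associate to $\gamma$ an infinite sequence of ``choices'' --- e.g.\ each time $\gamma$ enters a fresh region of the exhaustion, record which complementary clopen piece of the Cantor set it goes around, or equivalently record the isotopy type of how the next segment is attached --- and show that (a) this sequence takes values in a set of size $\geq 2$ at infinitely many places (using non-finite-type-ness), and (b) two geodesics with different codes lie in different $\Gamma$-orbits, because $\Gamma$ preserves the ``germ at infinity'' structure that the code captures: the point is that the first $n$ symbols of the code are determined by the isotopy class of a \emph{finite-type} piece of the configuration, on which $\Gamma$ acts with only countably many orbits, so a single $\Gamma$-orbit can realize only countably many code-prefixes of each length, hence only countably many codes. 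Since non-finite-type-ness gives a Cantor set worth of codes, the orbit is uncountable. \textbf{The main obstacle} I anticipate is making precise the coding and the claim that the code-prefix of length $n$ is a finite-type invariant on which $\Gamma$ has countably many orbits: one must choose the exhaustion and the ``choice at each stage'' carefully so that the $\Gamma$-action genuinely permutes these finitely-much-data configurations with countable orbits (this is a statement about $\Gamma$ acting on the space of isotopy classes of finite-type subsurfaces-with-a-geodesic-arc, which should follow from classical change-of-coordinates principles for mapping class groups together with transitivity of $\Gamma$ on clopen decompositions of the Cantor set), and one must ensure that genuinely non-isotopic geodesics are separated rather than merely ``eventually divergent.'' I expect the clean formulation routes through the observation already used in the proof of the Conical Circle Theorem: a geodesic's isotopy class is determined by its pattern of crossings with a complete system of curves, so ``stays in finite type'' $\iff$ ``crosses only finitely many curves of the system up to isotopy,'' and the contrapositive directly feeds the Cantor-set-of-codes construction.
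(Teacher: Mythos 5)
Your ``if'' direction is correct and matches the paper's: there are only countably many isotopy classes of subsurfaces of a given finite topological type, and $\MCG(\Sigma)$ is countable for finite-type $\Sigma$, so the orbit is a countable union of countable sets. The ``only if'' direction, however, has a genuine logical gap.

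Your coding argument cannot yield the conclusion as stated. You assert (claim (b)) that the code constrains the orbit: either ``two geodesics with different codes lie in different $\Gamma$-orbits,'' or, in the form you actually justify, ``a single $\Gamma$-orbit realizes only countably many codes.'' Combined with (a) (the code of $\gamma$ has infinitely many binary choices), you conclude the orbit is uncountable. This does not follow: bounding from \emph{above} the set of codes realized on $\Gamma\cdot\gamma$ gives no lower bound on $|\Gamma\cdot\gamma|$. If anything, the strong form of (b) would say all of $\Gamma\cdot\gamma$ has a single code, making (a) irrelevant. What your argument is really positioned to prove (with work) is that there are uncountably many $\Gamma$-\emph{orbits} of geodesics not trapped in finite-type subsurfaces --- true and consistent with the theorem, but not the theorem. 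To conclude what you want along these lines you would instead need $\Gamma\cdot\gamma$ itself to realize uncountably many codes, which contradicts your (b).

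In fact the strong form of (b) is false, and the failure mode \emph{is} the proof. If $\gamma$ passes essentially through infinitely many disjoint essential subsurfaces $\Sigma_i$, and $\phi$ is supported in $\Sigma_i$ for large $i$, then $\phi(\gamma)$ agrees with $\gamma$ on a long initial segment but has a different deeper code: same orbit, different code. The paper exploits exactly this. Choose essential simple loops $\alpha_i\subset\Sigma_i$ meeting $\gamma$ essentially; for $\sigma\in\{0,1\}^{\N}$ let $\tau_\sigma\in\Gamma$ be the infinite product of commuting Dehn twists in all $\alpha_i$ with $\sigma(i)=1$ (well-defined since the supports escape to infinity). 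Then all $\tau_\sigma(\gamma)$ lie in $\Gamma\cdot\gamma$, and the substantive step --- entirely absent from your proposal --- is to prove $\sigma\mapsto\tau_\sigma(\gamma)$ is \emph{injective} as a map to $S^1_C$. The paper does this quantitatively: for any fixed initial segment of $\gamma$ some $\Sigma_i$ is disjoint from it, so twisting in $\alpha_i$ moves $\gamma$ by arbitrarily little in $S^1_C$ for $i$ large; one then passes to a sparse subsequence of the $\alpha_i$ chosen so that the displacement introduced by each new twist is much smaller than all pairwise distances $d(\tau_{\sigma'}(\gamma),\tau_\sigma(\gamma))$ already achieved among $\sigma,\sigma'$ supported on $\{1,\dots,i\}$. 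This geometric-series separation forces injectivity (indeed an embedding of $\{0,1\}^{\N}$ onto a Cantor set in $S^1_C$). That injectivity estimate is the heart of the proof, and your proposal has no analogue of it.
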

\begin{proof}
Suppose $\gamma$ stays in $\Sigma$. There are only countably many subsurfaces of
$\Omega$ of the topological type of $\Sigma$. Also, the mapping class group of
$\Sigma$ is countable. Thus $\gamma$ has a countable orbit.

Conversely suppose $\gamma$ goes essentially through infinitely many disjoint essential
subsurfaces $\Sigma_i$ of $\Omega$. Choose a subsequence converging to some point
$k$ in the Cantor set, and choose in each subsurface a simple loop $\alpha_i$ intersecting
$\gamma$ essentially. Since for any given initial subarc of $\gamma$ 
there is some $\Sigma_i$ disjoint from it, Dehn twists in the $\alpha_i$ perturb $\gamma$ in $S^1_C$ 
an arbitrarily small amount for $i$ large. Choose a sufficiently sparse subsequence of indices 
which we relabel $\alpha_i$, and
associated to a sequence $\sigma:\N \to \lbrace 0,1\rbrace$ we can form the mapping
class $\tau_\sigma$ which is the product of a (positive) Dehn twist in all $\alpha_i$
with $\sigma(i)=1$. If for a fixed metric $d$ on $S^1_C$ and for each $i>0$ we choose $\alpha_{i+1}$ so that 
$d(\tau_{\bar{\sigma}}(\gamma),\tau_{\sigma}(\gamma))$ is much smaller than all $d(\tau_{\sigma'}(\gamma),\tau_{\sigma}(\gamma))$
for any $\sigma\neq\sigma'$ supported on $\{1,\ldots,i\}$ and $\bar{\sigma}(k)=\sigma(k)$ iff $k\neq i+1$,
then the map $\lbrace 0,1\rbrace^\N \to S^1_C$ sending $\sigma$
to $\tau_\sigma(\gamma)$ is injective. Figure \ref{fig: uncountable_orbit} gives an illustration.
\end{proof}
\begin{figure}
	\labellist
	\small 
	
	\pinlabel $\infty$ at 115 205
	\pinlabel $\gamma$ at 103 90
	\pinlabel $\Sigma_1$ at 78 150
	\pinlabel $\alpha_1$ at 150 145
	\pinlabel $\Sigma_2$ at 140 80
	\pinlabel $\alpha_2$ at 65 75
	\pinlabel $\Sigma_3$ at 8 20
	\pinlabel $\alpha_3$ at 98 20
	\endlabellist
	\centering
	\includegraphics[scale=0.8]{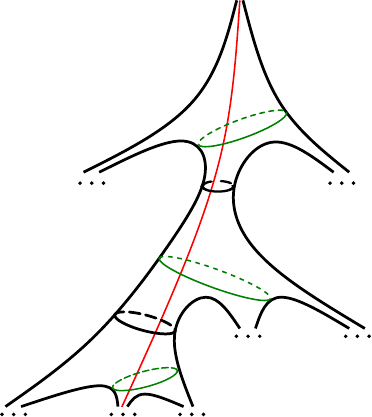}
	\caption{Infinitely many simple essential loops $\alpha_i$ in infinitely many disjoint subsurfaces $\Sigma_i$ that a ray $\gamma$ visits, with only the first three depicted}\label{fig: uncountable_orbit}
\end{figure}

\begin{exmp}\label{example:countable_orbits}
There are uncountably many long rays with countable orbits. Pick a subsurface $\Sigma$
of finite type, and let $\Lambda$ be a minimal geodesic lamination in $\Sigma$. Since
$\Lambda$ is minimal it avoids a neighborhood of $\infty$. Let $U$ be the complementary
region to $\Lambda$ in $\Sigma$ containing infinity, and let $\lambda$ be a boundary leaf
of $U$. Then we can take $\gamma$ to be a geodesic ray from $\infty$ to one of the endpoints
of $\lambda$.
\end{exmp}

The {\em simple circle} $S^1_S$ is the quotient of the conical circle $S^1_C$ where we
collapse every complementary interval to $R\cup X$ to a point. The action of $\Gamma$
on $S^1_S$ is minimal, by Theorem~\ref{theorem:conical_action} and has uncountably
many countable orbits by Example~\ref{example:countable_orbits}. The existence of these
orbits can be used to obstruct the existence of injective homomorphisms from certain
(necessarily uncountable) circularly orderable groups to $\Gamma$.

\section{Subgroups and non-subgroups}

\begin{thm}\label{thm: countable subgroup}
A countable group $G$ is isomorphic to a subgroup of $\Gamma$ if and only if it is
circularly orderable.
\end{thm}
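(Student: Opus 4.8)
The plan is to prove both directions of the equivalence, with the forward direction being essentially immediate and the converse requiring a concrete embedding construction. First I would recall that $\Gamma$ acts faithfully on the circle $S^1_C$ by orientation-preserving homeomorphisms (the conical circle action constructed in \S 2, or alternatively the action of \cite{Calegari_planar}). Since any subgroup of $\Homeorm^+(S^1)$ is circularly orderable, and circular orderability passes to subgroups, any $G \leq \Gamma$ is circularly orderable. This handles the ``only if'' direction regardless of cardinality, so the countability hypothesis is only needed for the converse.

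For the converse, suppose $G$ is a countable circularly orderable group. The standard fact (due to e.g.\ the work relating circular orders and actions on $S^1$) is that a countable group $G$ is circularly orderable if and only if it admits a faithful action on $S^1$ by orientation-preserving homeomorphisms; indeed one builds such an action from the circular order by embedding $G$ into the circle via the order and extending. So it suffices to realize an arbitrary countable subgroup $G \leq \Homeorm^+(S^1)$ as a subgroup of $\Gamma$. The idea is to encode a faithful $G$-action on a circle into the mapping class group of the plane minus a Cantor set. I would do this by picking a countable $G$-invariant dense subset $D \subset S^1$ (the orbit of a point, or a countable dense orbit if one exists, or more carefully a countable dense $G$-invariant set containing a point of each orbit type needed for faithfulness), thicken a Cantor set into an annular neighborhood of $S^1$ inside the plane, distribute the Cantor set along $S^1$ so that the cyclic order of Cantor subsets matches the circular order of $D$, and realize each $g \in G$ by a homeomorphism of the plane that permutes these Cantor blocks according to the action of $g$ on $D$ and is the identity near infinity outside the annulus. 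Because $G$ acts faithfully on $D$ and the blocks are genuinely permuted, distinct group elements give non-isotopic mapping classes, so the homomorphism $G \to \Gamma$ is injective.

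The main obstacle — and the step that needs care — is making the homeomorphisms of the plane \emph{continuous} and \emph{compatible with a single fixed Cantor set}: one must arrange all the $g$-homeomorphisms to preserve one and the same Cantor set $C \subset \R^2$, not merely to preserve it up to isotopy or up to rechoosing $C$. The clean way to do this is to use the fact that $\Homeorm^+(S^1)$ acts on the space of probability measures / on the circle itself, and to build $C$ as a ``Cantor blow-up'' of the action: take the countable dense orbit $D$, replace each point of $D$ by a small interval, and put a Cantor set inside a plane region shadowing the resulting (again circle-like) object, so that the $G$-action on $S^1$ literally restricts to an action on $C$; then extend this action of $G$ by homeomorphisms of the plane. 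Verifying that such an extension exists and is faithful at the level of mapping classes is the technical heart; continuity is automatic once the action on $C$ is continuous and the extension is built by coning or by a collar construction. One should also double-check that isotopic-to-identity cannot occur nontrivially: a homeomorphism of the plane minus a Cantor set fixing the end at infinity and inducing a nontrivial permutation of Cantor subsets that are separated by simple closed curves cannot be isotopic to the identity, because the isotopy class of the induced partition of the Cantor set is an invariant. With these points in place the two implications combine to give the theorem.
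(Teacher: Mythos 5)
Your proposal matches the paper's proof in all essentials: both directions are handled the same way (faithfulness of the conical action for "only if"; for the converse, realizing a faithful $G$-action on $S^1$, blowing up a countable dense union of orbits to obtain a $G$-invariant Cantor set with faithful induced action, and then suspending to the sphere or coning into the plane to land in $\Gamma$). The paper dispatches the faithfulness-at-the-mapping-class-level concern a bit more cleanly than you do — it simply observes that the composite $G \to \Gamma \to \Homeorm(\mathrm{Cantor})$ is already injective because $G$ acts faithfully on the Cantor set by construction — but your isotopy-invariant argument is a correct, if more roundabout, way to reach the same conclusion.
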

\begin{proof}
Since $\Gamma$ acts faithfully on $S^1$ it is circularly orderable, and so is every subgroup,
countable or not. If $G$ is circularly orderable, then it acts faithfully on $S^1$ by
homeomorphisms. Take a countable dense set of orbits of $G$ and blow them up to complementary
intervals. This defines an action of $G$ on the circle preserving a Cantor set, so that the
action on the Cantor set is already faithful. Suspend this to an action on $S^2$ where the
Cantor set is contained in the equator, and puncture $S^2$ at the north pole. This defines
a map from $G$ to $\Gamma$. This map is injective; in fact it is already injective when
we pass to the natural quotient $\Gamma \to \Homeorm(\text{Cantor})$.
\end{proof}

The argument actually shows that any action of $G$ on $S^1$ is semiconjugate to some
embedding $G \to \Gamma \to \Homeorm^+(S^1_S)$.
In other words, we can realize {\em every semiconjugacy class} of action of any countable group
on $S^1$ as a subgroup of $\Gamma$ acting on $S^1_S$.

\begin{thm}\label{thm: circle embeds}
The discrete group $S^1$ embeds in $\Gamma$. 
\end{thm}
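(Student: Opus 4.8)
The plan is to realize the abstract group $S^1$ (that is, $\R/\Z$ with the discrete topology) as a subgroup of $\Gamma$ by exhibiting a circular order on it and then invoking Theorem~\ref{thm: countable subgroup}'s method — except that group is uncountable, so the countability is exactly what we cannot use, and we must instead build the embedding by hand. First I would note that $S^1$ is circularly orderable in the most obvious way: its own rotation action on itself gives a left-invariant circular order. So $S^1$ acts faithfully on a circle by homeomorphisms, namely by rotations of $\R/\Z$. The goal is to upgrade this faithful circle action into a faithful action on $\Omega$ up to isotopy, i.e.\ a homomorphism $S^1 \to \Gamma$ that is still injective.

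The key step is to produce, from the rotation action, an action on the plane minus a Cantor set that remains faithful after passing to mapping classes. Here I would mimic the blow-up construction from the proof of Theorem~\ref{thm: countable subgroup}, but choose the set of orbits to blow up carefully. Pick a single point $x \in \R/\Z$; its $S^1$-orbit is all of $\R/\Z$, which is uncountable, so blowing up the whole orbit is not allowed. Instead, the trick is to embed a Cantor set into the circle $\R/\Z$ on which the rotation action is already faithful — but rotations act freely, with every orbit dense, so no $S^1$-invariant Cantor set exists. The resolution: do not ask for an invariant Cantor set in the base circle; instead suspend first. Take the mapping torus / suspension of the rotation flow to get an action of $S^1$ on an annulus $\R/\Z \times [0,1]$, or more usefully note that $S^1$ acts on $\R/\Z$ and we want the complement of a Cantor set $C$ with $S^1 \cdot C$ having nonempty complement so there is room for the puncture at infinity. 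I would instead proceed as follows: embed a Cantor set $C$ in the plane, arrange a continuous faithful $S^1$-action on the plane permuting $C$ — for instance take $C$ to lie on a round circle of radius $1$ centered at the origin in $\R^2$, take the $S^1$-action to be rotation about the origin, and take $C$ to be a Cantor subset of that round circle whose $S^1$-translates cover the circle. Since rotation acts by homeomorphisms of $\R^2$ fixing the origin, removing the origin (the point we call $\infty$ after one-point-compactifying and puncturing elsewhere — or rather, removing a point away from the invariant round circle and the origin) and restricting to $\Omega = \R^2 \setminus C'$ for a suitable Cantor set $C'$ gives a continuous homomorphism $S^1 \to \Homeorm^+(\Omega)$, hence $S^1 \to \Gamma$.

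The main obstacle — and the place requiring genuine care — is injectivity of the composite $S^1 \to \Homeorm^+(\Omega) \to \Gamma$: a nontrivial rotation $\rho_\theta$ acts nontrivially on $\Omega$, but one must rule out that it becomes isotopic to the identity after quotienting by isotopy. I would argue this exactly as in Theorem~\ref{thm: countable subgroup}: the action already permutes the Cantor set $C'$ nontrivially (for $\theta \neq 0$, $\rho_\theta$ moves points of $C'$), and the mapping class group surjects onto $\Homeorm(C')$ via the natural map $\Gamma \to \Homeorm(\Cantor)$ (the quotient used in the previous proof), under which $\rho_\theta$ has nontrivial image because the induced permutation of the Cantor set $C'$ is nontrivial. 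Hence $\rho_\theta$ is nontrivial in $\Gamma$, and injectivity follows. The one subtlety is to place the puncture $\infty$ at a point fixed by the whole $S^1$-action (e.g.\ the origin) so that the rotations genuinely descend to self-homeomorphisms of $\Omega$ fixing $\infty$; with that choice the argument goes through verbatim from the countable case, the only new input being the explicit continuous model for the rotation action rather than an abstract appeal to circular orderability.
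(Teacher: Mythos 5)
Your proposed construction does not work, and the obstruction is exactly the one you yourself noticed in the middle of the argument and then set aside. You observe correctly that no Cantor set on the round circle is invariant under all rotations (the only closed $SO(2)$-invariant proper subsets of a circle are finite), but then you propose to ``take $C$ to be a Cantor subset of that round circle'' and let $S^1$ act by rotations. For this to define a map $S^1 \to \Homeorm^+(\Omega)$ you would need \emph{every} rotation $\rho_\theta$ to preserve $C$, but the subgroup of rotations preserving a given Cantor set $C$ on the circle is a proper closed subgroup of $S^1$, hence finite. So for all but finitely many $\theta$, $\rho_\theta$ is not even a self-homeomorphism of $\R^2 \setminus C$; the map into $\Gamma$ is undefined. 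The ``translates of $C$ cover the circle'' condition does not help — covering is not the same as invariance.

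The problem is not just with this particular model; no continuous model of this sort can succeed. A connected topological group acting continuously on a Cantor set has connected orbits, hence fixed points, so the action on the Cantor set is trivial. If the honest rotation group $SO(2)$ is to act on $\R^2$ preserving a Cantor set $C'$ it must therefore fix $C'$ pointwise, which forces $C'$ into the common fixed-point set $\{0\}$ of the rotations — impossible. To embed the \emph{abstract} group $S^1$ into $\Gamma$ you must work at the level of the group structure, not of any continuous linear action. That is what the paper does: it uses the decomposition $S^1 \cong \Q/\Z \oplus \bigoplus_{\R/\Q} \Q$ of $S^1$ as an abstract abelian group, embeds $\Q/\Z$ into $\Gamma$ by the blow-up construction of Theorem~\ref{thm: countable subgroup}, then blows up countably many $\Q/\Z$-orbits to small disks carrying their own Cantor sets, and uses the central $\Z$-extension $\Gamma_D \to \Gamma$ (with $\Gamma_D$ the mapping class group of the disk minus a Cantor set) to lift $\Q/\Z$ to $\Q$ and to get the commuting $\Q$ factors acting with disjoint supports. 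You should pursue that purely combinatorial route rather than trying to build a continuous geometric model.
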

\begin{proof}
By thinking of $\R$ as a vector space over $\Q$, we obtain
$S^1\cong \Q/\Z\oplus(\oplus_{\R/\Q} \Q)$.
It follows that $S^1$ is a subgroup of $\Q/\Z\oplus\prod_{\N}\Q$. 
The $\Q/\Z$ factor embeds into $\Gamma$ by the construction above,
preserving a Cantor set contained in the equator.

For this action, choose distinct $\Q/\Z$-orbits $Z_i$ corresponding to 
$i\in\N$ on the equator minus Cantor set such that their union only 
accumulates to points in the Cantor set. For each $i\in \N$, 
blow up each point $z\in Z_i$ to a small closed disk $D_z$ 
identified with the unit disk so that $\Q/\Z$ permutes these disks and acts 
by the identity map under the identifications. 

Fix a Cantor set in the interior of the unit disk, 
which provides a Cantor set in each $D_z$ by the given identification. 
Then the union of these Cantor sets over all $z\in Z_i$ and $i\in\N$ 
together with the original Cantor set on the equator is still a Cantor set.

Let $\Gamma_D$ denote the mapping class group of the disk minus a Cantor set
(fixed on the boundary). Now, $\Gamma_D$ is a central $\Z$ extension of $\Gamma$,
where the center is generated by a Dehn twist around the boundary.
The analog of this fact in the world of small mapping class groups is
well-known, and the same proof works here; see e.g.\/ \cite[equation (4.2)]{Farb_Margalit}. 
Thus we may embed $\Q$ into the
$\Gamma_D$ by taking the preimage of some $\Q/\Z$ in $\Gamma$.
Using such an embedding, we let the $\Q$ factor corresponding to $i\in \N$ 
act on each $D_z$ minus its Cantor set for each $z\in Z_i$ simultaneously 
under the given identification. Then this action commutes with the $\Q/\Z$ action, 
and different $\Q$ factors have disjoint supports for their actions. 
This embeds $S^1$ in $\Gamma$.
\end{proof}

In the next theorem and in \S~\ref{rigidity_section}, we will need to make use of a
well-known homological argument, whose rudiments we now explain. For a reference
see Ghys \cite{Ghys} or see \cite{Calegari_scl} for more on quasimorphisms and
stable commutator length.

Let $G$ be any group
and $\rho:G \to \Homeorm^+(S^1)$ any action on the circle. Associated to this action
there is an {\em Euler class} $e_\rho \in H^2(G;\Z)$ or just $e$ if $\rho$ is understood,
which is the pullback under $\rho$ of
the generator of $H^2(\Homeorm^+(S^1);\Z)=\Z$. The class $e$ is the obstruction to
lifting the action to $\Homeorm^+(\R)$ under the covering $\R \to S^1$.

For any $g \in \Homeorm^+(S^1)$, Poincar\'e's {\em rotation number} $\rot(g) \in \R/\Z$
is defined by choosing any lift $\tilde{g} \in \Homeorm^+(\R)$ and computing 
$$\rot^\sim(\tilde{g}): = \lim_{n \to \infty} \frac {\tilde{g}^n(0)} {n}$$
and then defining $\rot(g)$ to be the reduction of $\rot^\sim$ mod $\Z$.

If $\tilde{\rho}:G \to \Homeorm^+(\R)$ is obtained by lifting $\rho:G \to \Homeorm^+(S^1)$
the composition $\rot^\sim\circ \tilde{\rho}:G \to \R$ is a {\em homogeneous quasimorphism}
with defect $\le 1$; thinking of it as a 1-cocycle, its coboundary determines an element
in bounded cohomology $\eurm_\R \in H^2_b(G;\R)$, the {\em bounded Euler class}. In fact,
$\eurm_\R$ comes from a class $\eurm_\Z \in H^2_b(G;\Z)$ by change of coefficient, unique if $G$ is perfect (and otherwise
determined by the values of the rotation numbers on generators for $H_1(G)$).

For any group, Ghys shows that the class $\eurm_\Z \in H^2_b(G;\Z)$ determines the action of 
$G$ up to monotone equivalence --- i.e.\/ the relation generated by semiconjugacy. In
particular, the rotation numbers are determined by the class $\eurm_\Z$. For a perfect
group, the rotation numbers are determined by $\eurm_\R$; otherwise they are determined
up to an element of $\Hom(G,S^1)$.

Now, suppose that $G$ is uniformly perfect; i.e.\/ that every element is the product of
a uniformly bounded number of commutators. Bavard duality (see \cite{Calegari_scl})
implies that every homogeneous quasimorphism must vanish identically on $G$. In particular,
this implies that the comparison maps $H^2_b(G;\R) \to H^2(G;\R)$ and $H^2_b(G;\Z) \to H^2(G;\Z)$
are injective. So for a uniformly perfect group $G$ acting on the circle, 
the rotation numbers of every element are determined by the Euler class $e$. If $G$ is
a uniformly perfect group with $H^2(G;\Z)=0$ one can say further that for the lifted
action to $\R$ one has $\rot^\sim\circ\tilde{\rho}(g)=0$ for all $g$. In particular, every
orbit on $\R$ is bounded (in fact, it has diameter $\le 1$) and consequently there is
a global fixed point in $\R$ which projects to a global fixed point for the original
action in $S^1$. We summarize these observations in a proposition, which we emphasize
is essentially due to Ghys:

\begin{prop}[Ghys]\label{Euler_prop}
Let $G$ be a uniformly perfect group. If $\rho:G \to \Homeorm^+(S^1)$ is any action,
the rotation numbers of every element are determined by the Euler class $e \in H^2(G;\Z)$.
If further $H^2(G;\Z)=0$ then every $\rho:G \to \Homeorm^+(S^1)$ has a global fixed point.
\end{prop}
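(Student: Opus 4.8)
The plan is to derive both assertions from Bavard duality together with Ghys's theorem that the integral bounded Euler class determines an action on the circle up to semiconjugacy, which is exactly the package assembled in the discussion preceding the statement.

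First I would record the consequences of uniform perfectness. By Bavard duality the space $Q(G)$ of homogeneous quasimorphisms on $G$ vanishes: a nonzero homogeneous quasimorphism is unbounded on some sequence of products of commutators, contradicting the uniform bound; and since $G$ is perfect, $\Hom(G,\R)=0$ as well. The standard exact sequence $0 \to H^1(G;\R) \to Q(G) \to H^2_b(G;\R) \to H^2(G;\R)$ then shows the comparison map is injective with $\R$ coefficients. To upgrade to $\Z$ coefficients, apply bounded cohomology to $0 \to \Z \to \R \to \R/\Z \to 0$: the relevant segment is $H^1_b(G;\R/\Z) \to H^2_b(G;\Z) \to H^2_b(G;\R)$, and $H^1_b(G;\R/\Z)=\Hom(G,\R/\Z)=0$ because $G$ is perfect, so $H^2_b(G;\Z) \hookrightarrow H^2_b(G;\R) \hookrightarrow H^2(G;\R)$. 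Since this composite factors through $H^2(G;\Z) \to H^2(G;\R)$, the integral comparison map $H^2_b(G;\Z) \to H^2(G;\Z)$ is injective.

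For the first assertion: by Ghys the monotone equivalence class of $\rho$, and in particular the function $g \mapsto \rot(\rho(g))$, is determined by the bounded Euler class $\eurm_\Z \in H^2_b(G;\Z)$; since $\eurm_\Z$ maps to $e$ under the now-injective comparison map, $e$ determines the rotation numbers. For the second assertion: if $H^2(G;\Z)=0$ then $e=0$, so $\rho$ lifts to $\tilde\rho: G \to \Homeorm^+(\R)$. Then $\rot^\sim\circ\tilde\rho$ is a homogeneous quasimorphism of defect $\le 1$, hence identically zero by the first paragraph. Thus every $\tilde\rho(g)$ has vanishing translation number; since $\tilde\rho(g)$ commutes with the unit translation, this forces $|\tilde\rho(g)(x)-x| < 1$ for all $x \in \R$, so the orbit $O$ of $0$ under $\tilde\rho(G)$ is bounded. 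Each $\tilde\rho(h)$ is an increasing homeomorphism permuting $O$, hence $\tilde\rho(h)(\sup O)=\sup \tilde\rho(h)(O)=\sup O$; so $\sup O$ is a global fixed point for $\tilde\rho$, and its image in $S^1$ is a global fixed point for $\rho$.

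I expect the only delicate point to be the integral-versus-real coefficient bookkeeping in the second paragraph (needed to pass from the vanishing of quasimorphisms to injectivity of $H^2_b(G;\Z)\to H^2(G;\Z)$), together with citing Ghys's theorem as a black box; the remaining steps are routine.
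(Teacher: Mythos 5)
Your proof is correct and follows the same route as the paper: Bavard duality kills homogeneous quasimorphisms, which makes the comparison maps $H^2_b \to H^2$ injective, after which Ghys's theorem pins down rotation numbers from $e$, and the vanishing of $H^2(G;\Z)$ forces all translation numbers (hence a bounded orbit and a global fixed point). The paper states these steps more tersely in the paragraph preceding the proposition; you have simply filled in the $\Z$-versus-$\R$ coefficient bookkeeping (via $H^1_b(G;\R/\Z)=\Hom(G,\R/\Z)=0$) and the "$\sup$ of a bounded invariant orbit is fixed" argument, both of which the paper takes as read.
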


We are now in a position to prove the following:

\begin{thm}
$\PSL_2(\R)$ is not isomorphic to a subgroup of $\Gamma$.
\end{thm}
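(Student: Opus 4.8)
The plan is to derive a contradiction from the assumption that $\PSL_2(\R)$ embeds in $\Gamma$, using the fact (proved just above via Ghys's theory) that actions of nice groups on $S^1$ are constrained by Euler classes and bounded cohomology, together with the structural information about $S^1_S$ — in particular the existence of countable orbits coming from finite-type subsurfaces (Theorem~\ref{theorem:countable_orbit} and Example~\ref{example:countable_orbits}). First I would recall that $\PSL_2(\R)$, being a connected perfect Lie group, is in fact uniformly perfect (every element is a bounded product of commutators), so by Bavard duality it admits no nonzero homogeneous quasimorphism; equivalently $H^2_b(\PSL_2(\R);\R)\to H^2(\PSL_2(\R);\R)$ is injective. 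Any embedding $\iota:\PSL_2(\R)\inj\Gamma$ composed with $\Gamma\to\Homeorm^+(S^1_S)$ gives an action $\rho$ of $\PSL_2(\R)$ on $S^1_S$, and by the Rigidity Theorem (or directly) it is either trivial or semiconjugate to the $\Gamma$-action, so it is nontrivial as long as $\iota$ is faithful and $\Gamma\to\Homeorm^+(S^1_S)$ is faithful on $\iota(\PSL_2(\R))$ — a point I would need to check or route around.

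The key step is to locate an \emph{elliptic} subgroup $SO(2)\cong S^1\subset\PSL_2(\R)$ and analyze its dynamics. Since $SO(2)$ is a compact connected group, every element of $\rho(SO(2))$ acting on $S^1_S$ has a well-defined rotation number, and the rotation number homomorphism $\rot:\rho(SO(2))\to\R/\Z$ is either trivial or an isomorphism (a continuous surjective homomorphism $S^1\to S^1$). If $\rot$ is trivial on $SO(2)$, then each element of this circle subgroup has a fixed point on $S^1_S$; I would argue that a compact connected group with every element having a fixed point on the circle must have a \emph{common} fixed point (the fixed-point set of each generator is closed and nonempty, and one can use minimality of the action or a Helly-type / intersection argument on the circle), contradicting minimality of the $\Gamma$-action on $S^1_S$ unless $\rho$ is trivial there. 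If instead $\rot:\rho(SO(2))\to\R/\Z$ is an isomorphism, then $\rho(SO(2))$ acts on $S^1_S$ with all orbits dense (no finite orbits, no invariant Cantor set that isn't everything), forcing — since the action is by homeomorphisms and conjugate to rotations on its minimal set — that $\rho(SO(2))$ is \emph{conjugate to the group of rigid rotations}. But a group conjugate to rigid rotations on $S^1_S$ has \emph{no} countable orbits and \emph{no} nontrivial invariant compact subsets other than $\emptyset$ and $S^1_S$; this is where I would bring in the structure of $\Gamma$'s action: the parabolic/hyperbolic one-parameter subgroups of $\PSL_2(\R)$ commute with nothing but themselves and, more to the point, their commutation relations with $SO(2)$ are incompatible with $SO(2)$ acting by rigid rotations while the whole group acts by mapping classes — concretely, a finite-type subsurface (or a short ray $r\in R$) is preserved by a large subgroup, giving a countable orbit, and this countable orbit is $SO(2)$-invariant only if $SO(2)$ has a finite orbit on $S^1_C$, contradicting $\rot$ being an isomorphism.

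Alternatively — and this may be cleaner — I would avoid the case analysis by exploiting the \emph{simple circle's point stabilizers}: pick a short ray $r\in R$; its stabilizer $\Gamma_r$ in $\Gamma$ contains a big mapping class group (of an infinite-type surface), while the $\rho$-image of $\PSL_2(\R)$ cannot have a subgroup isomorphic to $S^1$ fixing a point of $S^1_S$ with the required local dynamics, because near a point of $R$ in $S^1_S$ the action of $\Gamma$ sees the Cantor-set structure of $R\cup X$, and $SO(2)$ acting there either fixes a neighborhood (again forcing a global fixed point by connectedness of $SO(2)$ and minimality) or moves points in a way incompatible with being a rotation group. The main obstacle I anticipate is \textbf{upgrading "every element of $SO(2)$ has a fixed point" to "the whole $SO(2)$ has a common fixed point"}: on $S^1$ this is true for abelian groups of orientation-preserving homeomorphisms with rotation number $0$ by a standard argument (the intersection of the nonempty closed fixed sets is nonempty, using that two homeomorphisms with rotation number $0$ that commute have a common fixed point, then compactness), and since $SO(2)$ is abelian this should go through; but one must be careful that $\rho(SO(2))$ need not be closed in $\Homeorm^+(S^1_S)$, so I would phrase everything in terms of rotation numbers and the bounded Euler class, invoking Proposition~\ref{Euler_prop}-style reasoning applied to the \emph{restriction} $\rho|_{SO(2)}$, rather than topological properties of the image. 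Once $SO(2)$ is shown to have a common fixed point (or, in the rigid-rotation case, shown to contradict the existence of countable orbits), the contradiction with minimality of the simple-circle action — or with the Rigidity Theorem's dichotomy together with non-triviality — completes the proof.
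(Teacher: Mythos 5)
Your proposal correctly identifies the outline of the paper's argument — restrict attention to the $SO(2)$ subgroup, use uniform perfectness and Bavard duality to relate rotation numbers to the Euler class, and derive a contradiction from the existence of countable orbits on the simple circle — but the central step is left as an unjustified dichotomy that is in fact false. You assert that the rotation number homomorphism $\rot\colon SO(2)\to\R/\Z$ ``is either trivial or an isomorphism (a continuous surjective homomorphism $S^1\to S^1$).'' Nothing forces this. The action $\rho$ is only an abstract group homomorphism, so continuity of $\rot|_{SO(2)}$ is not automatic; and even if $\rot$ were continuous and surjective, $S^1\to S^1$ admits surjections with nontrivial kernel (e.g.\ $\theta\mapsto 2\theta$). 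Concretely, since $H^2(\PSL_2(\R);\Z)=\Z$, the Euler class of $\rho$ is $\lambda e$ for some integer $\lambda$, and uniform perfectness (so that $H^2_b\to H^2$ is injective) implies $\lambda$ determines the rotation numbers; but a priori $|\lambda|$ could be $\ge 2$, giving $\rot$ with nontrivial kernel — not in your two cases. The paper closes this precisely where you leave a gap: it first rules out $\lambda=0$ because $\PSL_2(\R)$ contains torsion (an action lifting to $\Homeorm^+(\R)$ must be torsion-free), and then pins $\lambda=\pm1$ using rigidity of triangle subgroups. Only then does one know $\rot$ on $SO(2)$ is $\theta\mapsto\pm\theta$, hence injective, hence the $SO(2)$-action is free; the orbit-map argument (dense image because a countable $SO(2)$-invariant subset of $S^1$ is absurd, then the Dedekind-cut argument for surjectivity) yields a transitive $SO(2)$-action and a contradiction with the existence of countable $\Gamma$-orbits on $S^1_S$.

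There is a second, independent error in how you propose to treat the ``trivial'' case. If $\rot|_{SO(2)}\equiv 0$, you argue $SO(2)$ has a common fixed point and claim this contradicts minimality of the $\Gamma$-action on $S^1_S$. It does not: minimality of $\Gamma$ is perfectly compatible with a \emph{subgroup} having a global fixed point — indeed the paper proves $\Gamma_r$ always has one. The correct way to rule out $\lambda=0$ is the torsion argument above, not a fixed-point contradiction. (Your ``alternatively, this may be cleaner'' paragraph is too vague to evaluate as a proof.) Your awareness that one should check faithfulness of $\PSL_2(\R)\to\Homeorm^+(S^1_S)$ is a reasonable caution that the paper glosses over as well, but that is a separate matter; the two issues above are the substantive gaps, and the missing ingredient supplying both is the Euler-class computation together with torsion and triangle-group rigidity to force $\lambda=\pm1$.
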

\begin{proof}
We will show that any faithful action of $\PSL_2(\R)$ on the circle is transitive.
Since the action of $\Gamma$ on the simple circle has countable orbits, this will 
prove the theorem.

First, observe that $\PSL_2(\R)$ is uniformly perfect. In fact, every element is a
commutator. This can be seen by hyperbolic geometry: there is a hyperbolic structure
on a once-punctured torus with a boundary geodesic of any length (including zero)
and there is also a hyperbolic cone structure on a torus with one cone point of any
angle $<2\pi$. It follows from
Proposition~\ref{Euler_prop} that for any $\rho:\PSL_2(\R) \to \Homeorm^+(S^1)$ the Euler class
$e$ determines the rotation numbers of every element.

Second, it is well-known that $H^2_b(\PSL_2(\R);\Z)=\Z$ (\cite[Example 2.13]{Matsumoto_Morita}). Since $\PSL_2(\R)$ has torsion,
no faithful action on the circle can have trivial Euler class, since otherwise the
action would lift to $\R$. Thus $e_\rho=\lambda e$ for some $\lambda\neq 0$, where $e_\rho$ 
and $e$ are the Euler classes associated to the $\rho$-action and the standard action respectively.
Certain subgroups (e.g. triangle subgroups) of $\PSL_2(\R)$
are rigid --- the only non-trivial homomorphisms to $\Homeorm^+(S^1)$ are standard. 
Thus $\lambda=\pm 1$, i.e.
there is only one Euler class $e$ (up to sign) 
associated to a faithful action of $\PSL_2(\R)$ on the circle, namely the one coming from the 
standard action on the boundary of the hyperbolic plane.

This implies that the rotation number of $\rho(g)$ is equal
to the (usual) rotation number for every $g \in \PSL_2(\R)$. In particular for $\theta$ in
the $S^1$ subgroup, the rotation number of any $\rho(\theta)$ is $\theta$. It follows that
$S^1$ acts {\em freely} on $S^1$, since any homeomorphism with a fixed point has zero rotation
number. Thus if we pick any $p \in S^1$ the orbit map $S^1 \to S^1$ is injective and 
order-preserving. We claim this orbit map is surjective. First, its image is dense; for
if not, the closure has countably many gaps, and these gaps correspond to a distinguished
countable subset of $S^1$ invariant under the action of $S^1$ on itself, which is absurd.
Second, its image is everything. To see this, let $q$ be a point in $S^1$; by considering
points to the left and right of $q$ in the image of the orbit map, we see that $q$ determines
a unique Dedekind cut and therefore a unique point in $S^1$, whose image under the
orbit map necessarily is to the right of the left points and to the left of the right points,
and is therefore equal to $q$. Since the orbit map $S^1 \to S^1$ is both 
order preserving and a bijection, it is a homeomorphism; it follows that the $S^1$ subgroup
of $\PSL_2(\R)$ must act on $S^1$ conjugate to the standard action. In particular, the
action is {\em transitive} on $S^1$, as claimed.
\end{proof}

Proposition~\ref{Euler_prop} is used also in the proof of Lemma~\ref{lemma: Gamma_(R) fix points}.

\section{Rigidity of $\Gamma$ actions on $S^1$}\label{rigidity_section}
We now arrive at the main result of the paper, the classification of $\Gamma$ actions
on the circle.
\begin{thm}[Rigidity]\label{thm: rigidity}
Any homomorphism $\Gamma \to \Homeorm^+(S^1)$ is either trivial, or is semiconjugate
(possibly up to a change of orientation) to the action on the simple circle $S^1_S$.
\end{thm}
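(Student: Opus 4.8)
The plan is to follow the skeleton outlined in the introduction, which reduces the rigidity statement to a sequence of increasingly concrete claims about the stabilizer subgroups $\Gamma_r$ and the short ray set $R$. Fix an action $\rho \colon \Gamma \to \Homeorm^+(S^1)$, and assume throughout that it is nontrivial; we want to produce a semiconjugacy to the simple circle. The first task is step (1): for every short ray $r \in R$, the subgroup $\Gamma_r$ has a global fixed point in $S^1$. The natural route here is \Cref{Euler_prop}: one shows that the relevant subgroup $\Gamma_{(r)}$ (homeomorphisms that are the identity near the endpoint of $r$) is uniformly perfect with vanishing $H^2(-;\Z)$, so that it has a global fixed point; this is exactly the content being deferred to \Cref{lemma: Gamma_(R) fix points}, and it is here that Mather's homological vanishing theorem for compactly supported homeomorphism groups enters — one exhibits $\Gamma_{(r)}$ (or a cofinal family of subgroups realizing it) as a group of compactly supported homeomorphisms of a surface, whose homology vanishes. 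One then has to promote the fixed point of $\Gamma_{(r)}$ to a fixed point of all of $\Gamma_r$: the quotient $\Gamma_r/\Gamma_{(r)}$ is something like a mapping class group of a once-punctured disk with a point fixed on the boundary arc near the end of $r$, and one argues (again via uniform perfection / Euler class considerations, or by a direct dynamical argument on the fixed set $\mathrm{Fix}(\Gamma_{(r)})$, which is $\Gamma_r$-invariant) that it too has a global fixed point on the nonempty closed invariant set $\mathrm{Fix}(\Gamma_{(r)})$.

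Granting that, I would carry out step (2): for short rays $r, s$ with distinct endpoints in the Cantor set, $\langle \Gamma_r, \Gamma_s \rangle = \Gamma$. This is a purely topological fact about $\Gamma$ and should follow from a change-of-coordinates / Alexander-method style argument: any mapping class can be built from homeomorphisms supported away from a neighborhood of the endpoint of $r$ together with homeomorphisms supported away from a neighborhood of the endpoint of $s$, using that the two rays, having distinct endpoints, allow one to "see" all of $\Omega$ from one side or the other. One likely needs the stronger statement that $\Gamma$ is generated by the $\Gamma_{(r)}$ as $r$ ranges over $R$ (or even boundedly many of them), which feeds back into the uniform perfection arguments. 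Combining steps (1) and (2): if $\Gamma_r$ and $\Gamma_s$ each have a global fixed point and together generate $\Gamma$, then either their fixed sets intersect — giving a global fixed point for $\Gamma$, hence (after checking minimal sets) a trivial action up to semiconjugacy, contradicting nontriviality unless we are in the trivial case — or $\mathrm{Fix}(\Gamma_r) \cap \mathrm{Fix}(\Gamma_s) = \emptyset$ for ray pairs with distinct endpoints. Pursuing the latter: I would define, for each $r \in R$, the set $\mathrm{Fix}(\Gamma_r)$, show it is a single point $P(r)$ (ruling out larger fixed sets by an argument that a larger fixed set would be invariant under a bigger subgroup, or by passing to the minimal set of $\rho(\Gamma)$ and using that $\Gamma$ acts without global fixed point there), and observe that $g \cdot P(r) = P(g r)$ makes $P \colon R \to S^1$ a $\Gamma$-equivariant map. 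Equivariance plus transitivity of $\Gamma$ on $R$ forces $P$ to be injective once we know the point $P(r)$ "remembers" $r$, i.e. distinct rays give distinct points — which is step (2) dressed up, since $P(r) = P(s)$ would be a common fixed point of $\Gamma_r$ and $\Gamma_s$.

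The final and hardest step, (4), is rigidity of the circular order: the injective $\Gamma$-equivariant map $P \colon R \to S^1$ must, after possibly reversing orientation, respect the natural circular order that $R$ inherits as a subset of the conical circle $S^1_C$. The strategy is to show that the circular order on $R$ is the unique $\Gamma$-invariant circular order up to reversal. This is where I expect the real work to live: one fixes finitely many rays $r_1, \dots, r_n$ in a specific cyclic configuration on $S^1_C$ and produces elements of $\Gamma$ (e.g. suitably chosen "half-twists" or homeomorphisms permuting Cantor subsets) whose action on this finite subset pins down the cyclic order of the $P(r_i)$ — for instance, an element with a specific north–south or more complicated dynamics whose attracting/repelling behavior on $\{r_i\}$ is visible both in $S^1_C$ and, via equivariance, in $S^1$, forcing the orders to agree. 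One organizes this by triples: for any three short rays, find $g \in \Gamma$ realizing a "rotation" of that triple, and conclude $P$ preserves or reverses each triple's orientation consistently. Once the circular order is matched, $P$ extends continuously to $\overline{R} = R \cup X$, the closure is mapped onto the unique minimal set of $\rho(\Gamma)$ (here one uses that $P(R)$ has dense image in the minimal set because its closure is a nonempty closed invariant set and $R\cup X$ is minimal for $\Gamma$ on $S^1_C$), and collapsing complementary gaps on both sides yields the semiconjugacy to $S^1_S$, whose $R$-part is exactly $\overline{R}$ with gaps collapsed. The orientation ambiguity is exactly the possible reversal of circular order, accounting for the "up to a change of orientation" clause. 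I anticipate that step (4) — extracting enough explicit elements of $\Gamma$ with controlled dynamics to rigidify the order — together with the homological input in step (1) will be the two main technical hurdles, with step (4) being the genuinely new difficulty specific to this group.
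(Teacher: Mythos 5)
Your proposal follows the four-step skeleton from the introduction and correctly identifies the general shape of each step, including Mather's suspension argument for acyclicity of $\Gamma_{(r)}$ and Ghys's Euler-class machinery for producing fixed points of $\Gamma_{(r)}$. But there are two substantive gaps, one at the transition from step (2) to step (3) and one at the heart of step (4).

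First, you argue that if $\mathrm{Fix}(\Gamma_r) \cap \mathrm{Fix}(\Gamma_s)$ is nonempty for a pair with distinct endpoints, then there is a global fixed point and ``hence (after checking minimal sets) a trivial action \ldots\ contradicting nontriviality.'' But a global fixed point does not by itself force the action to be trivial, so as written this is circular. The paper closes the gap with a separate structural result: $\Gamma$ is generated by $2$-torsion (Proposition~\ref{prop: gen Gamma by torsion}, proved by an explicit configuration of shrinking disks together with a half-rotation, again using an infinite-product trick in the spirit of Mather). Since $\Homeorm^+([0,1])$ is torsion-free, every homomorphism $\Gamma \to \Homeorm^+([0,1])$ is trivial, and cutting the circle at a would-be global fixed point turns a circle action into an interval action; hence a nontrivial circle action has no global fixed point. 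That is precisely what makes the dichotomy at the start of step (3) work. Relatedly, you propose to show $\mathrm{Fix}(\Gamma_r)$ is a single point; the paper neither does nor needs this. It simply fixes one point $P(r_0) \in \mathrm{Fix}(\Gamma_{r_0})$ and sets $P(r) := gP(r_0)$ for any $g$ with $gr_0 = r$, which is well-defined because the ambiguity in $g$ lies in $\Gamma_{r_0}$, which fixes $P(r_0)$.

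Second, and more seriously, your sketch of step (4) is a heuristic with no mechanism. You propose to pin down the cyclic order of finitely many rays using ``half-twists'' or north--south dynamics, but give no construction, and the paper's argument is quite different and far more elaborate. The easy base case is that $P$ preserves the cyclic order on triples of mutually \emph{disjoint} rays with distinct endpoints, because $\Gamma$ acts transitively on positively oriented such triples (Lemma~\ref{lemma: p preserves order on disjoint triples}). To handle arbitrary triples the paper introduces a filtration of $R$ by $\gamma$-length $R_n(\gamma)$, where $\gamma$ is an \emph{equator} (an embedded circle through the Cantor set), shows each $R_n(\gamma)$ is a Cantor set whose complementary intervals have boundary rays of length exactly $n$, and then runs an induction: all rays of $\gamma$-length $\le n+1$ lying in a single complementary interval of $R_n(\gamma)$ first cross $\gamma$ through a common gap $I$ of the Cantor set (Lemma~\ref{lemma: same first intersection}), and pushing $I$ across a triangle through infinity produces a new equator $\gamma'$ for which those rays have $\gamma'$-length $\le n$. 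A separate ``good circle'' lemma (Lemma~\ref{lemma: good circle}) guarantees any finite family of rays with distinct endpoints has finite $\gamma$-length for some equator, so the induction covers every triple with distinct endpoints; arbitrary triples are then handled by approximation from both sides. This reduction through equator filtrations --- not an abstract uniqueness of $\Gamma$-invariant circular orders --- is the technical core of the theorem, and it is absent from your proposal. Your final assembly (take the closure of $P(R)$, collapse gaps, match to $S^1_S$) does agree with the paper's.
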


\begin{corr}
	Any nontrivial action of $\Gamma$ on $S^1$ must contain a countable orbit and an uncountable orbit.
\end{corr}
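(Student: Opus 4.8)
The statement to prove is the Corollary: any nontrivial action of $\Gamma$ on $S^1$ must contain a countable orbit and an uncountable orbit. The plan is to derive this directly from the Rigidity Theorem together with the structural facts about the simple circle $S^1_S$ established in \S~4. Let $\rho\colon\Gamma\to\Homeorm^+(S^1)$ be a nontrivial action. By the Rigidity Theorem, $\rho$ is semiconjugate (possibly after reversing orientation) to the action on $S^1_S$; fix the monotone map $h\colon S^1\to S^1_S$ (or $S^1_S\to S^1$, in whichever direction the semiconjugacy is presented) intertwining the two actions. The idea is that semiconjugacy transports the relevant orbit-cardinality information in both directions: an uncountable orbit upstairs forces an uncountable orbit downstairs, and a countable orbit downstairs pulls back to a countable orbit upstairs.

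First I would recall why $S^1_S$ itself has both a countable and an uncountable orbit. The lasso set $L$ is a single $\Gamma$-orbit and is countable (stated in \S~3), and its image in $S^1_S$ is a countable orbit; alternatively, Example~\ref{example:countable_orbits} together with the Countable Orbit Theorem gives countable orbits of long rays staying in a finite-type subsurface. For an uncountable orbit: the short-ray set $R$ is a single orbit, it is uncountable (stated in \S~3), and it embeds into $S^1_S$ (no complementary interval of $R\cup X$ has an endpoint in the interior of $R$—indeed the boundary points of $R\cup X$ are long spirals, not short rays, by the Conical Circle Theorem), so its image is an uncountable orbit in $S^1_S$. Thus $S^1_S$ has orbits of both cardinalities; call them $O_{\mathrm{ctble}}$ and $O_{\mathrm{unc}}$.

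Next I would push these orbits across the semiconjugacy. Recall a semiconjugacy is a monotone degree-one surjection commuting with the actions; the preimage of a point is either a point or a closed interval, and the set of nondegenerate preimage intervals is countable. Suppose first $h\colon S^1\to S^1_S$ is $\Gamma$-equivariant. Then for any $x\in S^1$, $h$ maps the $\rho$-orbit of $x$ onto the $\Gamma$-orbit of $h(x)$ in $S^1_S$. Pick $x$ with $h(x)\in O_{\mathrm{unc}}$: then $\Gamma\cdot x$ surjects onto an uncountable set, so it is uncountable—this gives the uncountable orbit upstairs. For the countable orbit, pick $y_0\in S^1_S$ lying in $O_{\mathrm{ctble}}$; its full preimage $h^{-1}(O_{\mathrm{ctble}})$ is a $\rho$-invariant set which is a countable union of points and closed intervals, hence (as a union of the boundary-endpoint data plus interval interiors) its orbit structure: any $\rho$-orbit contained in a single fiber $h^{-1}(y_0)$—if the fiber is a point, that point is a global-fixed-point-free... more carefully, take $x$ a boundary point of a fiber $h^{-1}(y_0)$ when the fiber is nondegenerate, or $x=h^{-1}(y_0)$ when it is a point; then $h(\Gamma\cdot x)=\Gamma\cdot y_0=O_{\mathrm{ctble}}$ is countable, and the fiber over each point of $O_{\mathrm{ctble}}$ meets $\Gamma\cdot x$ in a bounded (at most $2$) number of points, so $\Gamma\cdot x$ is countable. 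If instead the semiconjugacy runs $h\colon S^1_S\to S^1$, the argument is symmetric: $h$ carries $O_{\mathrm{unc}}$ to an uncountable orbit and $O_{\mathrm{ctble}}$ to a countable orbit in $S^1$ (the image of an orbit is an orbit, and cardinality can only decrease for the countable one—still countable—and for the uncountable one it stays uncountable since $h$ is at most... here one uses that $h$ is finite-to-one on a co-countable set, or more simply that $h(O_{\mathrm{ctble}})$ and $h(O_{\mathrm{unc}})$ are genuine orbits and the second is uncountable because $h$ restricted to the minimal set is injective). Reversing orientation does not affect cardinalities of orbits, so the change-of-orientation caveat in the Rigidity Theorem is irrelevant here.

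The main obstacle, and the only place needing care, is bookkeeping the direction of the semiconjugacy and ensuring that cardinality is genuinely preserved rather than merely bounded in one direction. The safe route is to phrase everything in terms of the canonical monotone map and the standard fact that a semiconjugacy restricts to an \emph{injection} on the (unique) minimal set of one side onto the minimal set of the other—since $\Gamma\curvearrowright S^1_S$ is minimal, the minimal set upstairs maps bijectively onto $S^1_S$, so \emph{every} orbit structure of $S^1_S$ pulls back faithfully up to the countable collapsing data. Once that is set up, the uncountable orbit transports verbatim and the countable orbit transports up to a factor of at most $2$, completing the proof. I expect this to be short—essentially a paragraph—once the Rigidity Theorem and the \S~3–4 cardinality statements are invoked.
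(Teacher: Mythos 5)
The paper states this corollary without proof, as an immediate consequence of the Rigidity Theorem together with the facts established in \S3--4 that the simple circle has both countable orbits (e.g.\ the lasso orbit, or the long-ray orbits of Example~4.3) and uncountable orbits (the short-ray orbit $R$, which embeds in $S^1_S$ since the endpoints of collapsed intervals are long spirals, not short rays). Your proposal is correct and fills in exactly this implicit argument: push the uncountable orbit of $R$ backwards through the semiconjugacy $h\colon S^1\to S^1_S$ by surjectivity, and produce a countable orbit upstairs by taking an endpoint of a fiber over a point in a countable orbit (noting that the stabilizer downstairs lifts to the stabilizer of that endpoint, since orientation-preserving homeomorphisms fixing a fiber interval fix its endpoints). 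The only stylistic remark is that your write-up spends effort on the wrong semiconjugacy direction $S^1_S\to S^1$, which does not arise here --- the paper's semiconjugacy always collapses complementary intervals of the minimal set in the given circle onto $S^1_S$ --- so that branch of the case analysis could simply be dropped, and the bound ``at most $2$'' on the fiber intersection with the orbit is actually $1$ since the elements are orientation-preserving and hence send left endpoints to left endpoints.
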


Recall the notation $R$ for the set of short rays, i.e.\/ proper isotopy classes of simple
lines in $\Omega$ from infinity to some point on the Cantor set. If $r \in R$ is a
ray, denote by $\rayend(r)$ the endpoint of $r$ in the Cantor set.

We define two subgroups of $\Gamma$ associated to a ray.
For any ray $r\in R$, let $\Gamma_r$ be the stabilizer of $r$ in $\Gamma$ 
and let $\Gamma_{(r)}\le \Gamma_r$ be the subgroup of mapping classes 
that can be represented by homeomorphisms that are the identity in a neighborhood of 
$\rayend(r)$ (equivalently: in a neighborhood of $r$).

\begin{defn}
	We say a disk $D$ in $S^2$ is a \emph{proper dividing disk} if 
	\begin{enumerate}
		\item $\infty$ is in the exterior of $D$;
		\item both the interior and exterior of $D$ intersect the Cantor set while its boundary does not.
	\end{enumerate}
\end{defn}

\begin{lemma}\label{lemma: Gamma_(r) uniformly perfect and acyclic}
	Every element $g\in \Gamma_{(r)}$ is a commutator. In addition, $\Gamma_{(r)}$ 
is acyclic; i.e.\/ $\widetilde{H}_k(\Gamma_{(r)};\Z)=0$ for all $k$.
\end{lemma}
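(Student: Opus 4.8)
The plan is to realize $\Gamma_{(r)}$ as a group of homeomorphisms of a standard model surface, and then use an "infinite swindle" together with a Mather-style argument to get acyclicity, with uniform perfectness (in fact: every element is a single commutator) coming out along the way. First I would fix a tubular neighborhood $N$ of the short ray $r$ in $\Omega$; up to isotopy $\Omega \setminus N$ is homeomorphic to a half-open annulus one of whose ends accumulates a Cantor set, and the subsurface $\Omega_r \defeq \overline{\Omega\setminus N}$ carries all the action: by definition an element of $\Gamma_{(r)}$ is represented by a homeomorphism supported in $\Omega_r$, fixing a neighborhood of $\rayend(r)$, and fixing $r$ pointwise. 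So $\Gamma_{(r)}$ is the mapping class group of a surface $P$ which is a plane minus a Cantor set, but with the Cantor set "accessible from infinity along a ray" — concretely I would identify $P$ with a half-open strip $[0,\infty)\times[0,1]$ with a Cantor set $K$ removed from, say, $\{+\infty\}\times[0,1]$ worth of ends, where the boundary $\partial P$ is fixed pointwise. The point of this model is that $P$ admits a self-homeomorphism (a "shift toward infinity") which compresses $P$ into a proper sub-strip disjoint from any prescribed compact set, and under which a disjoint countable union of copies of $P$ can be absorbed — i.e. $P$ is homeomorphic to a disjoint union of $P$ with countably many copies of itself glued end-to-end, relative to the boundary.

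Given such a model, the acyclicity follows from the standard Eilenberg/Mather swindle for mapping class groups of surfaces "with room at infinity": one shows $\Gamma_{(r)}$ is a "binate" (or "mitotic," or "dissipated") group. Concretely, for any finitely many elements $g_1,\dots,g_n$ of $\Gamma_{(r)}$, pick disjoint copies $P_1, P_2,\dots$ of $P$ inside $P$ (possible by the shift construction) and let $\phi$ be the homeomorphism that on $P_k$ acts as the composite "$(g_1\cdots g_n)$ conjugated onto $P_k$"; there is then a single $\psi \in \Gamma_{(r)}$ (an infinite product of conjugating/shift maps) realizing the relation $g\cdot \phi = \psi\,\phi\,\psi^{-1}$ for each $g=g_i$, which is exactly the binate condition. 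Binate groups are acyclic — this is a theorem of Berrick — and in particular uniformly perfect; an explicit single-commutator expression for each $g\in\Gamma_{(r)}$ comes from the classical Eilenberg–Mazur swindle applied to one element: write $g = g \cdot (g g g^2 g^3 \cdots)(g g^2 g^3 \cdots)^{-1}$ rearranged as a commutator $[\,g\,T,\,U\,]$ where $T,U$ are built from the shift, so that $[\,g\,T,\,S\,] = g$ for the appropriate shift-type element $S$. So the logical order is: (1) build the model surface $P$ and the shift map; (2) deduce the infinite-disjoint-union absorption property of $P$ relative to $\partial P$; (3) conclude $g$ is a commutator by the one-element swindle; (4) conclude $\Gamma_{(r)}$ is binate, hence acyclic.

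The main obstacle will be step (1)–(2): verifying carefully that the subsurface cut out by a neighborhood of a \emph{short} ray really is of this "strip with a Cantor set at one end, boundary fixed" type, that this homeomorphism type is independent of choices, and — crucially — that it is homeomorphic to its own infinite self-connect-sum along the boundary. The subtlety is the presence of the boundary $\partial P$ (the sides of $N$) which must be fixed pointwise, so the absorption has to be done in a collar-respecting way; one wants a shift supported away from $\partial P$ near the infinite end, which forces one to be slightly careful about how $K$ accumulates and whether the "identity near $\rayend(r)$" condition is preserved under the swindle. Once the model and its absorption property are nailed down, steps (3)–(4) are formal applications of known swindle/binate-group machinery (Berrick, Mather), so I expect the write-up to spend most of its effort making the surface model and the absorbing homeomorphism precise, and relatively little on the homological algebra.
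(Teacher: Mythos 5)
Your plan (Mather's infinite swindle, realized via a shift map that pushes supports off to infinity) is exactly what the paper does, so the underlying idea is right. The paper's implementation is leaner, however, and sidesteps the ``main obstacle'' you identify: rather than first constructing a model surface $P$ (a strip with a Cantor set at one end, boundary fixed) and proving $P$ absorbs infinitely many copies of itself, it simply observes that each $g\in\Gamma_{(r)}$ is supported in some proper dividing disk $D_0$ disjoint from $r$, and that one can pick $h\in\Gamma_{(r)}$ with $D_n:=h^n(D_0)$ pairwise disjoint and shrinking to a point; then $a(g)=\prod_{n\ge 0} h^n g h^{-n}$ is a genuine element of $\Gamma_{(r)}$ and $g=[a(g),h]$, with no global model surface or homeomorphism-type verification needed, and no worry about collar-respecting absorption since everything is happening inside small disks far from $r$. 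For acyclicity, the paper likewise applies Mather's argument cycle-by-cycle --- given a cycle $\sigma$ in the bar complex involving finitely many elements, all supported in a single $D_0$, one gets $[\sigma]=[a(\sigma)]-h_*[a(\sigma)]h_*^{-1}=0$ --- rather than invoking Berrick's binate-groups theorem; these are formalizations of the same swindle, but the direct version keeps the proof self-contained and avoids having to verify the binate/dissipated axioms for a single ambient group.
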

\begin{proof}

This is proved by Mather's suspension argument \cite{Mather}. We explain.

	Any $g\in \Gamma_{(r)}$ is supported in some proper dividing disk $D_0$ whose complement contains $r$. Then there is some $h\in \Gamma_{(r)}$ such that the sequence of disks $D_n\defeq h^n(D_0)$ are pairwise disjoint and converging to a point. See Figure \ref{fig: suspension}. It follows that $g_n=h^n g h^{-n}$ is supported in $D_n$ and any $g_i,g_j$ commute due to disjoint supports. Moreover, the infinite product $a(g)\defeq \prod_{n=0}^{\infty} g_n$ is a well-defined element in $\Gamma_{(r)}$. Then $g=a(g)ha(g)^{-1}h^{-1}$ is a commutator.
	
	Mather's suspension argument \cite{Mather} shows that $\Gamma_{(r)}$ is acyclic. We give a sketch. Each homology class $[\sigma]\in H_k(\Gamma_{(r)};\Z)$ is represented by some cycle $\sigma$ in the bar complex for $B\Gamma_{(r)}$, which only involves finitely many elements in $\Gamma_{(r)}$. Hence there is some disk $D_0$ as above containing the support of all these elements. Replacing each element $g$ involved in the cycle $\sigma$ by $a(g)$ defined using some $h\in \Gamma_{(r)}$ as above results in another cycle $a(\sigma)$. Then one can easily check that $[\sigma]=[a(\sigma)]-h_* [a(\sigma)] h_*^{-1}$ which must vanish since any inner automorphism induces the identity map on group homology.
\end{proof}

\begin{figure}
\labellist
\small 

\pinlabel $\infty$ at 130 125
\pinlabel $r$ at 200 40
\pinlabel $D_0$ at 8 0
\pinlabel $D_1$ at 88 5
\pinlabel $D_2$ at 125 8
\pinlabel $p$ at 145 10
\pinlabel $h$ at 74 47
\pinlabel $h$ at 116 42
\endlabellist
\centering
\includegraphics[scale=1]{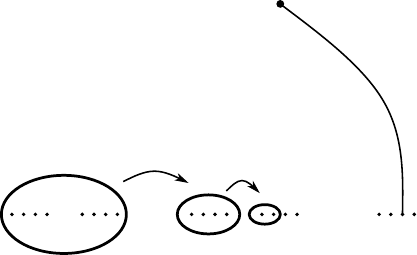}
\caption{A sequence of disjoint disks $D_n$ converging to some point $p$ in the Cantor set such that $h(D_n)=D_{n+1}$ for some element $h\in \Gamma_{(r)}$}\label{fig: suspension}
\end{figure}

\begin{lemma}\label{lemma: Gamma_(R) fix points}
Any action of $\Gamma_{(r)}$ on a circle has a global fixed point.
\end{lemma}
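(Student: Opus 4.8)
The plan is to reduce this to Proposition~\ref{Euler_prop} applied to the group $\Gamma_{(r)}$, using the properties established in Lemma~\ref{lemma: Gamma_(r) uniformly perfect and acyclic}. First I would observe that since every element of $\Gamma_{(r)}$ is a commutator, $\Gamma_{(r)}$ is perfect; hence any homomorphism $\Gamma_{(r)} \to \Homeorm(S^1)$ automatically has image in the orientation-preserving subgroup $\Homeorm^+(S^1)$, because the quotient $\Homeorm(S^1)/\Homeorm^+(S^1) \cong \Z/2$ is abelian and receives only the trivial map from a perfect group. So it suffices to treat actions by orientation-preserving homeomorphisms.

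Next I would record that $\Gamma_{(r)}$ is uniformly perfect — indeed of commutator width $1$ by the first part of Lemma~\ref{lemma: Gamma_(r) uniformly perfect and acyclic} — and that it is acyclic, so in particular $\widetilde H_1(\Gamma_{(r)};\Z) = \widetilde H_2(\Gamma_{(r)};\Z) = 0$. By the universal coefficient theorem, the exact sequence
\[
0 \to \mathrm{Ext}\big(H_1(\Gamma_{(r)};\Z),\Z\big) \to H^2(\Gamma_{(r)};\Z) \to \Hom\big(H_2(\Gamma_{(r)};\Z),\Z\big) \to 0
\]
then forces $H^2(\Gamma_{(r)};\Z) = 0$.

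Finally I would invoke Proposition~\ref{Euler_prop}: a uniformly perfect group $G$ with $H^2(G;\Z)=0$ has a global fixed point for every action $G \to \Homeorm^+(S^1)$. Applying this to $G = \Gamma_{(r)}$ and the (necessarily orientation-preserving) action in question produces the desired global fixed point, completing the proof.

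I do not expect a genuine obstacle here: the lemma is essentially a packaging of Lemma~\ref{lemma: Gamma_(r) uniformly perfect and acyclic} together with Proposition~\ref{Euler_prop}. The only points requiring a moment's care are the reduction to the orientation-preserving case (handled by perfectness) and the passage from acyclicity to the vanishing of $H^2(-;\Z)$ (handled by universal coefficients, using that both $H_1$ and $H_2$ vanish).
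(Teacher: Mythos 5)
Your proof is correct and follows the same route as the paper: apply Proposition~\ref{Euler_prop} to $\Gamma_{(r)}$, using Lemma~\ref{lemma: Gamma_(r) uniformly perfect and acyclic} for uniform perfectness and (via the universal coefficient theorem) vanishing of $H^2$. The paper's proof is more terse, but your added details — the reduction to orientation-preserving actions via perfectness, and the explicit universal-coefficients step passing from acyclicity to $H^2(\Gamma_{(r)};\Z)=0$ — are exactly the points the paper leaves implicit.
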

\begin{proof}
We have shown that $H^2(\Gamma_{(r)})=0$ and every element of $\Gamma_{(r)}$ is a commutator;
thus this is a special case of Proposition~\ref{Euler_prop}.
\end{proof}

\begin{lemma}\label{lemma: gen Gamma_r from Gamma_(r)}
	$\Gamma_r$ is generated by $\Gamma_{(r)}$ and $\Gamma_{(s)}\cap\Gamma_r$ for any ray $s$ with
	$\rayend(s)\ne\rayend(r)$.
\end{lemma}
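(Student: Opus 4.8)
The plan rests on two observations. First, $\Gamma_{(r)}$ is normal in $\Gamma_r$: conjugating a homeomorphism that is the identity near $r$ by an element of $\Gamma_r$ --- which preserves $r$ --- produces another such homeomorphism. So it suffices to show that $\Gamma_{(s)}\cap\Gamma_r$ surjects onto the quotient $\Gamma_r/\Gamma_{(r)}$; this gives $\Gamma_r=(\Gamma_{(s)}\cap\Gamma_r)\cdot\Gamma_{(r)}$ and hence the lemma. Second, since $\rayend(s)\ne\rayend(r)$ and $s$ is a proper simple ray, the point $\rayend(r)$ does not lie in the closure of $s$ in $S^2$, so we may choose a disk $D\subset S^2$ with $\rayend(r)$ in its interior, with $\partial D$ a circle disjoint from the Cantor set, with $\infty$, $\rayend(s)$ and $s$ all in the exterior of $D$, and with $r$ meeting $\partial D$ in a single point; moreover $D$ may be taken arbitrarily small, since the clopen pieces of the Cantor set containing $\rayend(r)$ form a neighborhood basis of $\rayend(r)$. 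Any mapping class represented by a homeomorphism that is supported in $D$ and fixes $r$ then lies in $\Gamma_{(s)}\cap\Gamma_r$: such a homeomorphism is disjoint from $s$ (hence preserves $s$ and is the identity near $\rayend(s)$, so lies in $\Gamma_{(s)}$) and preserves $r$. Writing $\Gamma_D\le\Gamma_r$ for this subgroup, the lemma reduces to the \emph{fragmentation} statement $\Gamma_r=\langle\Gamma_{(r)},\Gamma_D\rangle$.

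To prove the fragmentation, take $g\in\Gamma_r$ and represent it by a homeomorphism $\phi$ fixing $r$ pointwise and, after a further isotopy, equal to the identity near $\infty$. By continuity of $\phi$ at $\rayend(r)$ there is a smaller disk $D'\subseteq D$ around $\rayend(r)$, meeting $r$ in one point, with $\phi$ carrying the clopen piece $D'\cap K$ into $D\cap K$. A change-of-coordinates step --- two disks about $\rayend(r)$ that cut off the same clopen piece of $K$ and meet $r$ in one point are ambiently isotopic rel $r$ --- then lets me isotope $\phi$ rel $r$ so that $\phi(D')\subseteq D$ and, in addition, so that $\phi$ is the identity on a neighborhood of the closed arc $\overline{r\setminus D'}$ from $\infty$ to $\partial D'$; here one uses that this arc is disjoint from the Cantor set, so that the restriction of $\phi$ to such a neighborhood is a homeomorphism of a disk fixing a properly embedded arc and the identity near the $\infty$-end, hence isotopic rel that arc to the identity. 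Now let $d$ be the homeomorphism that agrees with $\phi$ on $D'$, is extended across the annulus $D\setminus D'$ by a homeomorphism onto $D\setminus\phi(D')$ agreeing with $\phi$ on $\partial D'$, with the identity on $\partial D$, and with the identity on $r\cap(D\setminus D')$, and is the identity outside $D$; such an extension exists because $\Homeorm^+(S^1)$ is connected and any two nonempty clopen subsets of a Cantor set are homeomorphic. Then $d$ is supported in $D$ and fixes $r$, so $[d]\in\Gamma_D$, while $n\defeq d^{-1}\phi$ is the identity on a neighborhood of all of $r$, so $[n]\in\Gamma_{(r)}$. Hence $g=[d][n]\in\langle\Gamma_{(r)},\Gamma_D\rangle$, which completes the argument.

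I expect the main difficulty to lie in the normalizations of $\phi$ in the second paragraph. A general element of $\Gamma_r$ may permute the Cantor set arbitrarily away from $\rayend(r)$, so no disk about $\rayend(r)$ is invariant and the image of a small disk need not be small; the crux is that after first shrinking to $D'$ using only continuity of $\phi$ at $\rayend(r)$, and then re-normalizing $\phi$ by an isotopy rel $r$, one can arrange $\phi(D')\subseteq D$ and strip $\phi$ of everything outside $D$ except a germ concentrated near $\rayend(r)$. This is a change-of-coordinates/fragmentation argument in the spirit of Mather's suspension trick used in Lemma~\ref{lemma: Gamma_(r) uniformly perfect and acyclic}; the disjointness $D\cap s=\emptyset$, available for free precisely because $\rayend(s)\ne\rayend(r)$, is exactly what forces the surviving germ into $\Gamma_{(s)}\cap\Gamma_r$. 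Note that one never needs $r$ and $s$ to be realized disjointly --- in general that is impossible.
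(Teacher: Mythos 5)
Your proof is correct and follows the same fragmentation idea as the paper's (which simply asserts the decomposition $g = g_1 g_2$ with $g_1 \in \Gamma_{(r)}$ and $g_2$ supported near $\rayend(r)$, without detail). You have filled in the change-of-coordinates and disk-extension work that the paper leaves implicit, including the crucial point that $\rayend(r)$ lies outside the closure of $s$ in $S^2$ exactly because $\rayend(s)\ne\rayend(r)$, so a supporting disk for the local piece can be chosen disjoint from $s$.
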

\begin{proof}
	Simply note that any $g\in \Gamma_r$ can be decomposed as $g=g_1 g_2$ with $g_1\in \Gamma_{(r)}$ and $g_2$ supported in any small neighborhood of the endpoint of $r$.
\end{proof}

\begin{lemma}
	For any action of $\Gamma$ on the circle, every $\Gamma_r$ has a global fixed point.
\end{lemma}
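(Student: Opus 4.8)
The plan is to fix a ray $r \in R$ and an action $\rho: \Gamma \to \Homeorm^+(S^1)$, and to produce a global fixed point for $\Gamma_r$ by combining the previous two lemmas with a connectedness/unique-fixed-point trick. By Lemma~\ref{lemma: Gamma_(R) fix points}, the subgroup $\Gamma_{(r)}$ has a nonempty fixed set $\mathrm{Fix}(\Gamma_{(r)}) \subset S^1$. By Lemma~\ref{lemma: gen Gamma_r from Gamma_(r)}, $\Gamma_r$ is generated by $\Gamma_{(r)}$ together with $\Gamma_{(s)} \cap \Gamma_r$ for any ray $s$ with $\rayend(s) \ne \rayend(r)$; and $\Gamma_{(s)} \cap \Gamma_r$ sits inside $\Gamma_{(s)}$, which also has a nonempty fixed set by the same lemma. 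So the first thing I would do is reduce the problem to showing that $\mathrm{Fix}(\Gamma_{(r)})$ is actually preserved by (and contains a point fixed by) the larger group $\Gamma_r$; equivalently, that $\mathrm{Fix}(\Gamma_{(r)}) \cap \mathrm{Fix}(\Gamma_{(s)}) \ne \emptyset$ and that this intersection is $\Gamma_r$-invariant.

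The key observation to exploit is that $\Gamma_{(r)}$ and $\Gamma_{(s)}$ are \emph{not} independent: one can choose a \emph{third} ray $t$ with $\rayend(t)$ distinct from both $\rayend(r)$ and $\rayend(s)$, and then $\Gamma_{(t)}$ contains elements that conjugate $\Gamma_{(r)}$-type data to $\Gamma_{(s)}$-type data, or more usefully, $\Gamma_{(r)}$, $\Gamma_{(s)}$, $\Gamma_{(t)}$ pairwise generate large overlapping subgroups. Concretely, I would argue that $\Gamma_{(r)} \cap \Gamma_{(s)}$ is itself uniformly perfect and acyclic by the very same Mather suspension argument as in Lemma~\ref{lemma: Gamma_(r) uniformly perfect and acyclic} (its elements are supported in proper dividing disks disjoint from both $r$ and $s$, and one can still find $h$ producing a shrinking sequence of disjoint disks), hence has a global fixed point. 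Since $\Gamma_{(r)} \cap \Gamma_{(s)}$ is \emph{normal} in neither but is acted on by conjugation, the cleaner route is: show $\mathrm{Fix}(\Gamma_{(r)})$ and $\mathrm{Fix}(\Gamma_{(s)})$ are each closed nonempty invariant-under-their-own-group sets, and that $\Gamma_{(r)}$ acts on $\mathrm{Fix}(\Gamma_{(s)})$ in a way that forces a common fixed point — e.g. because the "return" element $h$ used in the suspension argument conjugates one fixed set toward the other, and a limiting argument pins a point down. The degenerate-action subtlety (if some $\Gamma_{(r)}$ acts with all of $S^1$ fixed the statement is trivial) should be handled first to avoid edge cases.

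An alternative, possibly slicker, line is purely group-theoretic: show directly that $\Gamma_r$ itself is uniformly perfect and that $H^2(\Gamma_r;\Z)$ vanishes (or at least that the relevant Euler class vanishes), then apply Proposition~\ref{Euler_prop} verbatim. Uniform perfectness of $\Gamma_r$ should follow by writing $g = g_1 g_2$ as in Lemma~\ref{lemma: gen Gamma_r from Gamma_(r)} and expressing each factor as a (single) commutator using the suspension trick applied to disks disjoint from $r$ on one side and in a neighborhood of $\rayend(r)$ on the other. The cohomological vanishing is the part I would expect to be genuinely delicate — $\Gamma_r$ is not acyclic (it is close to a mapping class group of a once-punctured-twice infinite-type surface and can carry $H^2$), so one likely cannot kill all of $H^2$; instead one must show the \emph{specific} pulled-back class $e_\rho$ restricted to $\Gamma_r$ is zero, presumably by restricting further to $\Gamma_{(r)}$ (where $H^2 = 0$) and to $\Gamma_{(s)} \cap \Gamma_r$ and using a Mayer--Vietoris / generation argument to conclude $e_\rho|_{\Gamma_r}$ vanishes.

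The main obstacle, in either approach, is the same: passing from "each of the two generating subgroups $\Gamma_{(r)}$ and $\Gamma_{(s)} \cap \Gamma_r$ has a fixed point" to "the group they generate has a common fixed point." Two subgroups with fixed points need not generate a group with a fixed point in general, so the argument must genuinely use extra structure — either the acyclicity of well-chosen intersections (to kill the relevant bounded/ordinary Euler class of $\Gamma_r$ and invoke Ghys), or an explicit dynamical argument that the fixed sets are forced to overlap because of the conjugating elements living in a third subgroup $\Gamma_{(t)}$. I expect the cleanest writeup isolates a small lemma: \emph{if a group $G$ is generated by subgroups $A, B$ with $A \cap B$ acyclic and uniformly perfect, and $G$ is uniformly perfect with $H^2(G)$ built from $H^2(A), H^2(B)$ via the generation, then any $G$-action on $S^1$ has a global fixed point} — and then verifies the hypotheses for $G = \Gamma_r$, $A = \Gamma_{(r)}$, $B = \Gamma_{(s)} \cap \Gamma_r$ using the Mather machinery already set up.
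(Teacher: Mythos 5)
Your proposal correctly sets up the problem (reduce to the generators $\Gamma_{(r)}$ and $\Gamma_{(s)}\cap\Gamma_r$ given by Lemma~\ref{lemma: gen Gamma_r from Gamma_(r)}, each of which has a fixed point) and correctly identifies the obstacle: two subgroups each having fixed points does not in general give a common fixed point for the group they generate. But the proposal then goes looking for heavy machinery (acyclicity and uniform perfectness of $\Gamma_{(r)}\cap\Gamma_{(s)}$, or a computation of $H^2(\Gamma_r)$ and vanishing of the restricted Euler class) and never lands on an actual argument; you explicitly say the cohomological route would be ``genuinely delicate'' and leave both alternatives speculative. This is a genuine gap, because the missing ingredient is much more elementary and you walk right past it.

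The key structural fact the paper uses is that $\Gamma_{(r)}$ is \emph{normal} in $\Gamma_r$ (indeed $\Gamma_r$ is its normalizer): if $g\in\Gamma_{(r)}$ is the identity near $\rayend(r)$ and $h\in\Gamma_r$, then $hgh^{-1}$ is the identity near $\rayend(r)$ too. Normality immediately gives that the whole group $\Gamma_r$ --- not just $\Gamma_{(r)}$ --- preserves the closed set $F_r=\mathrm{Fix}(\Gamma_{(r)})$, and therefore permutes its complementary intervals. Now set $F_s=\mathrm{Fix}(\Gamma_{(s)})$, also nonempty. If $F_r\cap F_s\neq\emptyset$, any point there is fixed by both generating subgroups, hence by $\Gamma_r$. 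If $F_r\cap F_s=\emptyset$, pick a point $x\in F_s$; it lies in some complementary interval $I$ of $F_r$. The subgroup $\Gamma_{(s)}\cap\Gamma_r$ fixes $x$ and permutes the complementary intervals of $F_r$ (since it sits in $\Gamma_r$), so it must fix $I$ setwise, hence fixes $\partial I$. But $\partial I\subset F_r$ is already fixed by $\Gamma_{(r)}$, so $\partial I$ is fixed by both generating subgroups, hence by $\Gamma_r$. Your write-up gets close in spirit --- you mention that ``$\Gamma_{(r)}$ acts on $\mathrm{Fix}(\Gamma_{(s)})$'' and that conjugation should force overlap --- but $\Gamma_{(r)}$ does \emph{not} normalize $\Gamma_{(s)}$, so that particular action doesn't exist; the normality you actually need (of $\Gamma_{(r)}$ inside $\Gamma_r$) is never stated, and without it the short dynamical argument above is unavailable.
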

\begin{proof}
	First note that $\Gamma_{(r)}$ is normal in $\Gamma_r$ (actually, $\Gamma_r$ is
	the normalizer of $\Gamma_{(r)}$). 
	In fact, if $g\in \Gamma_{(r)}$ is the identity in some neighborhood $U$ of 
	$\rayend(r)$ and $h\in \Gamma_r$, then $hgh^{-1}$ is the identity on $h(U)$, which is 
	also a neighborhood of $\rayend(r)$. 
	
	Fix any ray $s$ with $\rayend(s) \ne \rayend(r)$. 
	Let $F_r$ and $F_s$ be the set of points on the circle fixed by $\Gamma_{(r)}$ and $\Gamma_{(s)}$ respectively. 
	These are nonempty closed sets by Lemma~\ref{lemma: Gamma_(R) fix points}. Lemma~\ref{lemma: gen Gamma_r from Gamma_(r)} implies that $\Gamma_r$ fixes any point in $F_r\cap F_s$. So we are left with the case where $F_r$ and $F_s$ are disjoint. $\Gamma_r$ permutes $F_r$ and thus its complementary intervals since $\Gamma_{(r)}$ is normal in $\Gamma_r$. It follows that $\Gamma_{(s)}\cap \Gamma_r$ fixes the boundary points of any complementary interval of $F_r$ that intersects $F_s$. Hence such boundary points would be fixed by $\Gamma_r$.
\end{proof}

\begin{lemma}\label{lemma: gen Gamma by Gamma_r}
	If $r,s\in R$ have distinct endpoints, then $\Gamma_r$ and $\Gamma_s$ generate $\Gamma$.
\end{lemma}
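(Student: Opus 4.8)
The plan is to show that $\Gamma_r$ and $\Gamma_s$ together can produce any mapping class by a ``reduce the complexity of the ray'' argument: given $g \in \Gamma$, the ray $gr$ is some short ray, and I want to move it back to $r$ using elements of $\langle \Gamma_r, \Gamma_s\rangle$; once $gr$ is brought to $r$, the resulting composite lies in $\Gamma_r$, and we are done. Since $\Gamma$ acts transitively on $R$, it suffices to show that $\langle \Gamma_r, \Gamma_s \rangle$ already acts transitively on $R$ — then for any $g$ pick $k$ in the subgroup with $k(gr) = r$, so $kg \in \Gamma_r$ and $g = k^{-1}(kg) \in \langle \Gamma_r, \Gamma_s\rangle$. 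So the real content is transitivity of $\langle\Gamma_r,\Gamma_s\rangle$ on $R$.

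To prove transitivity on $R$, the key geometric input is that $\Gamma_{(s)} \cap \Gamma_r$ (mapping classes supported away from $\rayend(s)$ and fixing $r$) is already fairly large, and symmetrically for $\Gamma_{(r)} \cap \Gamma_s$. First I would observe that, because $\rayend(r) \neq \rayend(s)$, we may choose disjoint embedded representatives of $r$ and $s$, and a proper dividing disk $D$ whose interior contains $\rayend(s)$ (hence a neighborhood of $s$ near its endpoint) and whose exterior contains $r$ entirely along with $\rayend(r)$; the mapping class group of the exterior piece (a sphere-minus-Cantor-set with a hole, fixing that boundary) maps into $\Gamma_{(s)}$, and inside it we have lots of room to move rays that avoid the disk. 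The crucial claim is: given any short ray $t \in R$, there is an element of $\langle \Gamma_r, \Gamma_s\rangle$ carrying $t$ to $r$. I would argue this by first using an element of $\Gamma$ — but realized as a product of pieces supported near $\rayend(r)$ versus away from it — to push $t$ into a ``standard position'' relative to $r$ and $s$; concretely, one can first apply something in $\Gamma_{(r)} \cap \Gamma_s$ (supported near $\rayend(r)$, so fixing $s$) to disentangle $t$ from $r$ near $\rayend(r)$, making $t$ disjoint from $r$; then $t$ lives in the complement of a neighborhood of $r$, which is a surface on which $\Gamma_{(r)} \supseteq$ a big subgroup acts, and there one can move $t$ to agree with $s$; finally apply the transitivity already available ``inside $\Gamma_s$'' plus a piece of $\Gamma_r$ to finish. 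The cleanest packaging: show $\langle \Gamma_r,\Gamma_s\rangle$ acts transitively on the set of \emph{pairs} (or at least on $R$) by a ping-pong-style alternation between ``fix $r$, move freely away from $\rayend(r)$'' and ``fix $s$, move freely away from $\rayend(s)$'', using that any $g\in\Gamma$ is a product $g = g_1 g_2$ where $g_1$ is supported in a small disk about $\rayend(r)$ (so $g_1 \in$ something conjugate into $\Gamma_{(s)}\cap\Gamma_r$ after an initial adjustment) and $g_2$ is supported away from $\rayend(r)$ (so $g_2 \in \Gamma_{(r)}$), mirroring the decomposition used in Lemma \ref{lemma: gen Gamma_r from Gamma_(r)}.

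I expect the main obstacle to be the ``disentangling'' step: moving an arbitrary short ray $t$ to be simultaneously disjoint from $r$ using only the constrained subgroups, and controlling how $t$ winds around the Cantor set near $\rayend(r)$ and $\rayend(s)$. One has to be careful that the change-of-coordinates moves used are each genuinely supported either away from $\rayend(r)$ (to lie in $\Gamma_{(r)}$, hence in $\Gamma_s$ if additionally chosen to fix $s$ — or at least in $\Gamma_r$ via $\Gamma_{(r)}\le\Gamma_r$) or away from $\rayend(s)$; the ``change of coordinates principle'' for big mapping class groups does give transitivity on embedded rays, but making it respect these support constraints requires choosing the intermediate surfaces and disks with care. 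A safe route is to fix a single auxiliary ray $s'$ with $\rayend(s')\notin\{\rayend(r),\rayend(s)\}$ and show every $t\in R$ can be sent to $s'$ by an element supported away from one of the three endpoints, then chain through; but the honest statement only needs two rays, so I would instead exploit that the complement of (a neighborhood of $r$) $\cup$ (a neighborhood of $s$) is connected and contains a Cantor set, and run the change-of-coordinates argument there, inside a subgroup that fixes both $r$ and $s$, to reduce to the case already handled — this reduces the whole lemma to the transitivity of a big mapping class group of a planar surface on its embedded rays, which is standard.
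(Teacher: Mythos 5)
Your overall strategy is right: reduce to showing that $\Gamma(r,s)\defeq\langle\Gamma_r,\Gamma_s\rangle$ acts transitively on $R$, and first pass to the case where $r$ and $s$ are disjoint using a mapping class supported near $\rayend(r)$ that fixes $s$. Both of these steps match the paper's proof. However, the heart of the matter --- proving transitivity --- is where your proposal has a genuine gap, and you flagged it yourself. The ``disentangling'' step (making an arbitrary $t$ disjoint from $r$ via an element of $\Gamma_{(r)}\cap\Gamma_s$ or similar) is not justified, and the proposed fallback via the decomposition $g=g_1g_2$ fails in general: for $g\in\Gamma$ that does not fix $\rayend(r)$, there is no reason to have $g=g_1g_2$ with $g_1$ supported near $\rayend(r)$ and $g_2$ supported away from $\rayend(r)$; that decomposition (used in Lemma~\ref{lemma: gen Gamma_r from Gamma_(r)}) is only available for $g\in\Gamma_r$. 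Moreover, even when such a decomposition exists, $g_2$ being supported away from $\rayend(r)$ does not put it in $\Gamma_{(r)}$, since $\Gamma_{(r)}$ requires fixing the entire ray $r$, not merely its endpoint. So the ping-pong alternation you sketch does not quite close.

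The paper's proof supplies exactly the missing idea: invoke the connectedness of the Ray Graph, and induct on the length of a minimal chain $x=x_0,x_1,\dots,x_n=s$ in which consecutive rays are disjoint in the interior. The engine of the induction is the change-of-coordinates fact that $\Gamma_r$ acts transitively on the set of rays disjoint from $r$ (and likewise for $\Gamma_s$). The base case --- $x$ disjoint from $s$ --- is handled by first using $\Gamma_s$ to make $x$ disjoint from both $r$ and $s$, then $\Gamma_r$ to move $x$ to $s$; the inductive step moves $x_1$ to $s$ and then the image of $x$ is adjacent to $s$, reducing to the base case. This Ray Graph induction is the piece your proposal is missing, and without it the transitivity claim is not established; the rest of your structure (reduction to transitivity, reduction to disjoint $r,s$) is consistent with the paper's argument.
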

\begin{proof}
	Denote by $\Gamma(r,s)$ the subgroup generated by $\Gamma_r$ and $\Gamma_s$. 
	It suffices to show that $\Gamma(r,s)$ acts transitively on $R$. Since $r$ and $s$ have distinct endpoints, there is a mapping class $g$ supported in a small neighborhood of $\rayend(r)$ so that $g\in \Gamma_s$ and $r'\defeq gr$ is disjoint from $r$ (see Figure \ref{fig: localchange}). Then $\Gamma_{r'}=g\Gamma_r g^{-1}\subset \Gamma(r,s)$ and thus $\Gamma(r,r')\subset \Gamma(r,s)$. So it suffices to prove the lemma with the additional assumption that $r$ and $s$ are disjoint.
	
	Now we show that any short ray $x$ can be taken to $s$ by elements in $\Gamma(r,s)$. 
	This will evidently complete the proof.
	We show this by induction on the length of a minimal sequence
	$x=x_0,x_1,\cdots,x_n=s$ where adjacent rays are disjoint in the interior (i.e. they
	may share the endpoint). 
	Such a minimal number exists by the connectedness of the Ray
	Graph; see e.g.\/ \cite{Calegari_blog} or \cite{Bavard_hyperbolic} for details.
	
	Note that every $\Gamma_r$ is transitive on the set of rays disjoint from $r$.
	Thus if $x$ is disjoint from $s$, we can apply an element of $\Gamma_s$ to make it
	disjoint from both $r$ and $s$, and then apply an element of $\Gamma_r$ to move
	it to $s$. This proves the base case since any $x$ disjoint from $s$ in the interior
	can be made disjoint from $s$ by applying an element of $\Gamma_r$ supported in a 
	small neighborhood of $\rayend(s)$ similar to the one in Figure \ref{fig: localchange}.
	
	Now we suppose we have a sequence $x=x_0,x_1,\cdots,x_n = s$ and suppose
	$g \in \Gamma(r,s)$ takes $x_1$ to $s$. Then $gx$ is disjoint from $s$ in the interior, 
	so by the base case there is an $h \in \Gamma(r,s)$ with $hgx=s$. This completes the proof.
\end{proof}

\begin{figure}
	\labellist
	\small 
	
	\pinlabel $\infty$ at 130 127
	\pinlabel $s$ at 16 50
	\pinlabel $D$ at 86 5
	\pinlabel $r$ at 98 42
	\pinlabel $r'$ at 112 43
	\endlabellist
	\centering
	\includegraphics[scale=1]{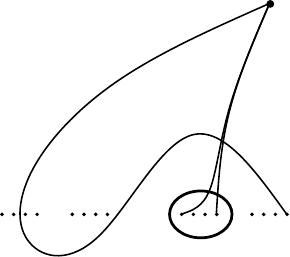}
	\caption{A mapping class supported in a small disk $D$ taking $r$ to a disjoint ray $r'$}\label{fig: localchange}
\end{figure}

\begin{prop}\label{prop: gen Gamma by torsion}
	$\Gamma$ is generated by $2$-torsion.
\end{prop}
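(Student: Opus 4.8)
The plan is to show every element of $\Gamma$ is a product of involutions, using the fact (established in the excerpt) that $\Gamma$ is generated by the stabilizers $\Gamma_r$ of short rays together with the highly flexible structure of $\Gamma_{(r)}$. First I would reduce to a local statement: by Lemma~\ref{lemma: gen Gamma by Gamma_r} (and the transitivity of $\Gamma$ on rays) it suffices to show that each $\Gamma_r$ is generated by $2$-torsion, and by Lemma~\ref{lemma: gen Gamma_r from Gamma_(r)} it suffices to show that $\Gamma_{(r)}$ is generated by $2$-torsion, since the ``local'' factor $\Gamma_{(s)}\cap\Gamma_r$ is a conjugate of a subgroup of the same shape. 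So the whole problem collapses to: \emph{every element of $\Gamma_{(r)}$ is a product of involutions in $\Gamma$.}

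For this I would run a variant of Mather's infinite-swindle argument already used in the proof of Lemma~\ref{lemma: Gamma_(r) uniformly perfect and acyclic}. Given $g\in\Gamma_{(r)}$, it is supported in a proper dividing disk $D_0$ disjoint from $r$, and there is $h\in\Gamma_{(r)}$ with the disks $D_n:=h^n(D_0)$ pairwise disjoint and converging to a point. The element $a(g):=\prod_{n\ge 0} h^n g h^{-n}$ is a well-defined element of $\Gamma_{(r)}$ with $g\cdot a(g) = h\, a(g)\, h^{-1}$, i.e.\ $g = \bigl(h\,a(g)\,h^{-1}\bigr)\,a(g)^{-1}$. Now I use a double-coset/ping-pong trick: choose a second sequence of disjoint disks, alternately nested, so that one can find an involution $\iota$ swapping pairs of supports, giving $a(g)$ (or a suitable product) as a product of $\iota$ with a conjugate of $\iota$; this is the standard fact that, in a group of homeomorphisms of a disk where one can arrange infinitely many disjoint copies of any compactly supported configuration, the infinite product of conjugates of a compactly supported element $f$ equals a commutator of two \emph{involutions} — one conjugates the odd-indexed copies to the even-indexed ones, the other shifts. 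Concretely, if $\iota$ is the involution realizing the ``pairing'' of the $D_n$ and $\iota'$ the one realizing the ``shift'' on a doubled sequence, one gets an identity of the form $a(g) = \iota\,(\iota'\iota\iota')$ or similar, expressing $a(g)$ as a product of (at most four) involutions. Then $g$, being a product of $a(g)$-type elements and their conjugates by $h$, is itself a product of involutions.

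The point requiring care — and the step I expect to be the main obstacle — is arranging the two auxiliary involutions to actually lie in $\Gamma_{(r)}$ (so that they are genuine mapping classes of $\Omega$, fixing $r$ and a neighborhood of $\mathrm{end}(r)$), while still performing the necessary swaps and shifts of the disk configuration. The disks $D_n$ accumulate at a point $p$ in the Cantor set, so the swapping/shifting homeomorphisms must extend continuously across $p$; this forces one to be careful about orientations and about how the Cantor set sits inside each $D_n$. One clean way to handle this: build the involutions by first producing them on a single model disk-with-Cantor-set (where reflections through a diameter give plenty of involutions permuting sub-configurations) and then ``suspending'' this model pattern along the sequence $D_n$, letting the common accumulation point $p$ be a fixed point of the involutions. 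As long as the model configuration is chosen symmetrically, the suspended maps are well-defined elements of $\Gamma_{(r)}$, and the Mather identity then expresses $g$ as an explicit bounded-length product of $2$-torsion elements. One then remarks that this also reproves uniform perfectness and refines it: $\Gamma$ is generated by involutions, in fact every element is a product of a bounded number of them.
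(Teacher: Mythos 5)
Your reduction is exactly the paper's: use Lemma~\ref{lemma: gen Gamma by Gamma_r} and Lemma~\ref{lemma: gen Gamma_r from Gamma_(r)} to reduce to showing that every element supported in a proper dividing disk lies in the subgroup $\Gamma_2$ generated by $2$-torsion, and then run a Mather-style infinite swindle. You also correctly flag where the real work is. But the proposal stops precisely at that point, and the step you wave at is a genuine gap: the assertion that ``the infinite product of conjugates of a compactly supported element $f$ equals a commutator of two involutions'' is not a standard fact, and you give no construction of $\iota,\iota'$ nor a verification of any identity like $a(g)=\iota(\iota'\iota\iota')$. As written, this is a placeholder, not an argument.

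For comparison, the paper resolves the obstacle by an explicit construction rather than by trying to force $a(g)$ itself into a product of involutions. It builds a Cantor set with a genuine $\pi$-rotation $r$ (so $r^2=\mathrm{id}$) and a shift $h$ commuting with $r$, and observes that for the infinite product $x_g=\prod_{n\ge0}h^ngh^{-n}$, the element $a_g:=[x_g,r]=(x_g r x_g^{-1})\cdot r$ is \emph{automatically} a product of two conjugates of the involution $r$ — no auxiliary involutions need to be built. It then introduces a carefully designed element $b$ acting as $r$ on one family of disks and as $rh$ on the rotated family, and verifies the telescoping identity $g=a_g\cdot(ba_gb^{-1})$, after which normality of $\Gamma_2$ finishes the argument. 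So the key algebraic move is ``commutator with a fixed involution is a product of two conjugates of that involution'' plus the identity $g=a_g(ba_gb^{-1})$, not a direct decomposition of $a(g)$. You should also note that your worry about making the involutions lie in $\Gamma_{(r)}$ is misplaced: the goal is only to show $\Gamma_{(r)}\subseteq\Gamma_2$, and $\Gamma_2$ is generated by $2$-torsion of $\Gamma$ at large; the involutions and the conjugating element $b$ are free to move $r$ and its endpoint, and in the paper they do.
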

\begin{proof}
	Let $\Gamma_2$ be the subgroup generated by $2$-torsion. Then $\Gamma_2$ is normal in $\Gamma$. In particular, any commutator $[a,b]=a(ba^{-1}b^{-1})\in \Gamma_2$ if $a\in \Gamma_2$.
	
	Construct a Cantor set $K$ in the plane in the following manner.
	
	Take a closed disk $D_0$ with small radius centered at a point on the positive side of the $x$-axis. Let $h_0$ be a shrinking dilation centered at the origin such that $D_1\defeq h_0 D_0$ is disjoint from $D_0$. Put a Cantor set $K_0$ in the interior of $D_0$ and let $D_n\defeq h_0^n D_0$, $K_n\defeq h_0^n K_0$. Let $r$ be the rotation by $\pi$ centered at the origin and $D'_n\defeq r D_n$, $K'_n\defeq r K_n$. Then the union of all $K_n,K'_n$ and the origin is a Cantor set $K$. See Figure \ref{fig: genbytor}.
	
	In this way, the homeomorphisms $h_0$ and $r$ commute and $r^2=id$. One can modify $h_0$ to a homeomorphism $h$ that preserves $K$, still commutes with $r$, and agrees with $h_0$ on $D_n,D'_n$ for all $n\ge 1$. Considered $r$ and $h$ as elements in $\Gamma$. There is another $b\in \Gamma$ satisfying the following properties:
	\begin{enumerate}
		\item $b|_{D_n}=r$ for all $n\ge 1$;
		\item $b|_{D'_n}=rh=hr$ for all $n\ge 1$;
		\item $b|_{D_0}=id$;
		\item $b(K'_0)=K'_0\cup K_1$,
	\end{enumerate}
	where the last two bullets ensure that $b$ preserves the Cantor set $K$.
	
	Let $g\in \Gamma$ be any element supported in a proper dividing disk $D$, which we may assume to be $D_1$ by conjugation. Then $h^n g h^{-n}$ is supported in $D_{n+1}$ and the infinite product $x_g=\prod_{n\ge 0} h^n g h^{-n}\in \Gamma$ is well-defined. 
	Now $$a_g\defeq[x_g,r]=\prod_{n\ge 0} h^n g h^{-n}\cdot \prod_{n\ge 0} rh^n g^{-1} h^{-n} r^{-1}$$ is an element in $\Gamma_2$ since $r$ is $2$-torsion.
	
	Note that $bh^n g h^{-n}b^{-1}=rh^n g h^{-n}r^{-1}$ since $b|_{D_{n+1}}=r$ for all $n\ge 0$. Similarly $brh^n g h^{-n}r^{-1}b^{-1}=h^{n+1} g h^{-(n+1)}$ since $b|_{D'_{n+1}}=rh$ for all $n\ge 0$. Thus
	$$ba_gb^{-1}=\prod_{n\ge 0} bh^n g h^{-n}b^{-1}\cdot \prod_{n\ge 0} brh^n g^{-1} h^{-n} r^{-1}b^{-1}=\prod_{n\ge 0} rh^n g h^{-n}r^{-1} \cdot \prod_{n\ge 0} h^{n+1}g^{-1}h^{-(n+1)},$$
	and $g=a_g (ba_g b^{-1})$ lies in $\Gamma_2$ since $a_g$ does.
	
	In particular, any $g\in \Gamma_{(r)}$ for any ray $r\in R$ is of the form above. Hence $\Gamma_{(r)}$ is contained in $\Gamma_2$. Applying this to $r$ and $s$ with distinct endpoints, Lemma \ref{lemma: gen Gamma_r from Gamma_(r)} implies $\Gamma_r\subset \Gamma_2$. Then we conclude $\Gamma_2=\Gamma$ by Lemma \ref{lemma: gen Gamma by Gamma_r}.
\end{proof}

In the proof above, we actually proved the stronger statement that $\Gamma$ is \emph{normally} generated by a single $2$-torsion. 
In an up-comping paper \cite{Calegari_Chen}, we show that $\Gamma$ is the normally generated by any element that does not fix the Cantor set pointwise.

\begin{figure}
	\labellist
	\small 
	
	\pinlabel $O$ at 152 38
	\pinlabel $D'_0$ at 0 55
	\pinlabel $D'_1$ at 75 48
	\pinlabel $D'_2$ at 115 45
	\pinlabel $D_0$ at 305 55
	\pinlabel $D_1$ at 227 48
	\pinlabel $D_2$ at 180 45
	\endlabellist
	\centering
	\includegraphics[scale=1]{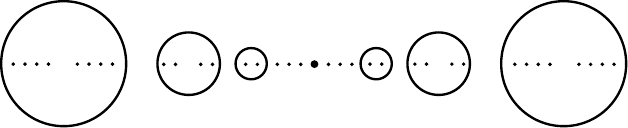}
	\caption{A specific symmetric configuration of the Cantor set and disks $D_n$, $D'_n$}\label{fig: genbytor}
\end{figure}

\begin{corr}
	Any homomorphism $\rho: \Gamma\to \Homeorm^+([0,1])$ is trivial. Thus a nontrivial homomorphism $\rho: \Gamma\to \Homeorm^+(S^1)$ has no global fixed point.
\end{corr}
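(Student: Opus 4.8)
The plan is to deduce both claims from Proposition~\ref{prop: gen Gamma by torsion}, which tells us that $\Gamma$ is generated by elements of order $2$. First I would observe that $\Homeorm^+([0,1])$ is torsion-free: an orientation-preserving homeomorphism of $[0,1]$ fixes the two endpoints, and if $f^2 = \mathrm{id}$ then on each complementary interval of the fixed-point set $\mathrm{Fix}(f)$ the map $f$ is an orientation-preserving involution of an open interval without fixed points, which is impossible (such an interval cannot support a nontrivial finite-order orientation-preserving homeomorphism, since the induced order-preserving involution of $\R$ would have a fixed point by the intermediate value theorem). Hence every $2$-torsion element of $\Gamma$ maps to the identity in $\Homeorm^+([0,1])$. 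Since these elements generate $\Gamma$, any homomorphism $\rho:\Gamma\to\Homeorm^+([0,1])$ is trivial.

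Next I would derive the second statement. Suppose $\rho:\Gamma\to\Homeorm^+(S^1)$ has a global fixed point $p\in S^1$. Then the orbit of $p$ is $\{p\}$, and $\rho$ factors through the stabilizer of $p$; cutting the circle open at $p$ identifies this stabilizer with $\Homeorm^+([0,1])$ (the homeomorphisms fixing $p$ and preserving orientation are exactly those arising from homeomorphisms of $[0,1]$ fixing both endpoints). Thus $\rho$ is really a homomorphism $\Gamma\to\Homeorm^+([0,1])$, which we have just shown must be trivial. Contrapositively, a nontrivial $\rho:\Gamma\to\Homeorm^+(S^1)$ can have no global fixed point.

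The argument is essentially immediate once Proposition~\ref{prop: gen Gamma by torsion} is in hand, so there is no real obstacle; the only point requiring a moment's care is the torsion-freeness of $\Homeorm^+([0,1])$ (equivalently, of $\Homeorm^+(\R)$), which follows from the order-theoretic rigidity of orientation-preserving maps of an interval. One could alternatively phrase this using the well-known fact that an amenable — indeed left-orderable — target admits no nontrivial finite subgroups; but the direct interval argument is cleaner and self-contained. I would present it in the two short paragraphs above.
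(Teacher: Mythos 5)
Your proof is correct and follows the same route as the paper: both deduce triviality from Proposition~\ref{prop: gen Gamma by torsion} together with the torsion-freeness of $\Homeorm^+([0,1])$, and both obtain the second claim by cutting the circle at a global fixed point. The only difference is that you spell out the (standard) argument that $\Homeorm^+([0,1])$ is torsion-free, which the paper simply asserts.
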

\begin{proof}
	The first assertion immediately follows from Proposition \ref{prop: gen Gamma by torsion} by noticing that the group $\Homeorm^+([0,1])$ is torsion-free. Any action on the circle with a global fixed point gives rise to an action on the interval by cutting at some fixed point.
\end{proof}

From now on, fix an arbitrary action $\rho: \Gamma\to \Homeorm^+(S^1)$ without global fixed points. Fix some $r_0\in R$ and let $P(r_0)\in S^1$ be a fixed point of $\Gamma_{r_0}$. Any $r\in R$ can be written as $r=gr_0$ for some $g\in \Gamma$. Let $P(r)\defeq g P(r_0)$, which does not depend on the choice of $g$ and is stabilized by $\Gamma_r$. Then $P:R\to S^1$ is $\Gamma$-equivariant.

\begin{lemma}\label{corr: distinct image}
	If $r,s\in R$ have distinct endpoints, then $P(r)$ and $P(s)$ are distinct.
\end{lemma}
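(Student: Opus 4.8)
I want to show that if $r,s\in R$ have distinct endpoints, then $P(r)\neq P(s)$. Suppose for contradiction that $P(r)=P(s)=:x$. Since $x$ is fixed by both $\Gamma_r$ and $\Gamma_s$, and these two subgroups generate $\Gamma$ by Lemma~\ref{lemma: gen Gamma by Gamma_r}, the point $x$ would be a global fixed point for the $\Gamma$-action. But we are working with an action $\rho$ that has no global fixed point (this is the standing assumption in force from the preceding corollary onward). This is a contradiction, so $P(r)\neq P(s)$.

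That is the whole argument in one line, so the real content is just assembling the pieces already in place. Let me spell out what needs checking. First, $P$ is well-defined and $\Gamma$-equivariant: this is exactly the paragraph just before the lemma, using that $\Gamma$ acts transitively on $R$ and that $P(r_0)$ is a $\Gamma_{r_0}$-fixed point. Second, $P(r)$ is stabilized by $\Gamma_r$: if $r=gr_0$ then $\Gamma_r=g\Gamma_{r_0}g^{-1}$, and $\Gamma_r\cdot P(r)=g\Gamma_{r_0}g^{-1}\cdot gP(r_0)=g\Gamma_{r_0}P(r_0)=gP(r_0)=P(r)$; independence of the choice of $g$ follows because if $g_1r_0=g_2r_0$ then $g_2^{-1}g_1\in\Gamma_{r_0}$ fixes $P(r_0)$, so $g_1P(r_0)=g_2P(r_0)$. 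Third, the generation statement: Lemma~\ref{lemma: gen Gamma by Gamma_r} applies precisely because $r,s$ have distinct endpoints.

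There is essentially no obstacle here — the lemma is a short corollary of Lemma~\ref{lemma: gen Gamma by Gamma_r} together with the no-global-fixed-point hypothesis. If I wanted to be careful about one subtle point, it would be making sure the standing hypothesis "$\rho$ has no global fixed point" is genuinely available: the corollary preceding this lemma establishes that any nontrivial $\rho:\Gamma\to\Homeorm^+(S^1)$ has no global fixed point, and the sentence "From now on, fix an arbitrary action $\rho:\Gamma\to\Homeorm^+(S^1)$ without global fixed points" makes this explicit. So the contradiction is legitimate. The only thing to phrase cleanly is that a point fixed by both a generating pair of subgroups is fixed by the whole group, which is immediate since the stabilizer of $x$ in $\Homeorm^+(S^1)$ is a subgroup containing $\Gamma_r\cup\Gamma_s$ and hence all of $\langle\Gamma_r,\Gamma_s\rangle=\Gamma$.

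\begin{proof}
Recall that the action $\rho$ has no global fixed point. Suppose, for contradiction, that $P(r)=P(s)=:x$. By construction $x$ is fixed by $\Gamma_r$, and likewise by $\Gamma_s$. The set of elements of $\Gamma$ fixing $x$ is a subgroup of $\Gamma$ containing both $\Gamma_r$ and $\Gamma_s$, hence contains the subgroup they generate. Since $r$ and $s$ have distinct endpoints, Lemma~\ref{lemma: gen Gamma by Gamma_r} shows that $\Gamma_r$ and $\Gamma_s$ generate $\Gamma$, so every element of $\Gamma$ fixes $x$. This contradicts the assumption that $\rho$ has no global fixed point. Therefore $P(r)\ne P(s)$.
\end{proof}
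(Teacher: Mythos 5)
Your proof is correct and is essentially identical to the paper's: assume $P(r)=P(s)$, note the common point would be fixed by $\Gamma_r$ and $\Gamma_s$, invoke Lemma~\ref{lemma: gen Gamma by Gamma_r} to conclude it is a global fixed point, and contradict the standing no-global-fixed-point hypothesis.
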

\begin{proof}
	If $P(r)=P(s)$ then its stabilizer contains both $\Gamma_r$ and $\Gamma_s$ which generate $\Gamma$ by Lemma \ref{lemma: gen Gamma by Gamma_r}. This contradicts our assumption that $\rho$ has no global fixed points.
\end{proof}

\begin{lemma}\label{lemma: p preserves order on disjoint triples}
	By appropriately choosing the orientation of $S^1$ on which $\rho(\Gamma)$ acts, the triple $(P(r_1),P(r_2),P(r_3))$ is positively oriented whenever $(r_1,r_2,r_3)$ is, for any disjoint $r_1,r_2,r_3\in R$ with distinct endpoints.
\end{lemma}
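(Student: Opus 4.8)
The plan is to package the claim into a single sign and show that sign is constant. Given a triple $(r_1,r_2,r_3)$ of pairwise disjoint short rays with pairwise distinct endpoints, Lemma~\ref{corr: distinct image} shows $P(r_1),P(r_2),P(r_3)$ are pairwise distinct, so the cyclic orientation $\epsilon(r_1,r_2,r_3)\in\{+,-\}$ of the ordered triple $(P(r_1),P(r_2),P(r_3))$ is well defined. Write $\mathcal{T}$ for the set of such triples that are positively oriented as triples of points of $S^1_C$ (equivalently, in the cyclic order in which the rays emanate from $\infty$); the goal is to show that, after possibly reversing the orientation of the circle on which $\rho(\Gamma)$ acts, $\epsilon\equiv+$ on $\mathcal{T}$.

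First I would note that $\epsilon$ is $\Gamma$-invariant on $\mathcal{T}$. Indeed, for $g\in\Gamma$ we have $P(gr_i)=\rho(g)P(r_i)$ by equivariance, and $\rho(g)\in\Homeorm^+(S^1)$ preserves cyclic orientation, so $\epsilon(gr_1,gr_2,gr_3)=\epsilon(r_1,r_2,r_3)$; moreover $\Gamma$ acts on $S^1_C$ by orientation-preserving homeomorphisms, so $(gr_1,gr_2,gr_3)\in\mathcal{T}$ whenever $(r_1,r_2,r_3)\in\mathcal{T}$. Thus $\epsilon$ is a $\Gamma$-invariant function $\mathcal{T}\to\{+,-\}$.

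Next I would show that $\Gamma$ acts transitively on $\mathcal{T}$, which then forces $\epsilon$ to be constant. This is a change-of-coordinates statement: for $(r_1,r_2,r_3)\in\mathcal{T}$, the set $r_1\cup r_2\cup r_3\cup\{\infty\}$ is a tripod in $S^2$ with central vertex $\infty$ and three leaves on the Cantor set; its complement is an open disk, and the Cantor set minus the three endpoints lies inside this disk, accumulating onto exactly the three leaves. Cutting along the tripod and using that any two ``closed disks with three marked boundary points carrying an interior Cantor set that accumulates onto exactly those points'' are homeomorphic, one sees that any element of $\mathcal{T}$ can be carried to any other (with matching labels and ray orientations, since both triples are positively oriented) by a mapping class. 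A variant that leans only on facts about the subgroups $\Gamma_r$: it suffices to check that $\epsilon(r_1,r_2,r_3)$ is unchanged when $r_3$ is replaced by any $r_3'$ disjoint from $r_1,r_2$ leaving $\infty$ in the same \emph{slot} between $r_1$ and $r_2$ (and symmetrically for the other two entries) --- this holds because $\Gamma_{r_1}\cap\Gamma_{r_2}$ fixes $P(r_1)$ and $P(r_2)$ and, being orientation-preserving, preserves each of the two arcs of $S^1\setminus\{P(r_1),P(r_2)\}$, while some $h\in\Gamma_{r_1}\cap\Gamma_{r_2}$ carries $r_3$ to $r_3'$ by change of coordinates on the relevant side, so $P(r_3)$ and $P(r_3')$ lie in the same arc; combining this with $\Gamma$-invariance and transitivity of $\Gamma$ on $R$ again pins $\epsilon$ down to a constant.

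Being constant on $\mathcal{T}$, $\epsilon$ is identically $+$ or identically $-$; in the latter case, reversing the orientation of the target circle keeps $\rho(\Gamma)$ inside $\Homeorm^+$ for the new orientation and turns $\epsilon$ into the constant $+$, which is the assertion. I expect the main obstacle to be the change-of-coordinates input --- verifying that the relevant infinite-type configurations (a plane-minus-Cantor-set cut along a tripod, or along two rays) have a unique topological type, so that $\Gamma$, resp.\ $\Gamma_{r_1}\cap\Gamma_{r_2}$, is transitive on the relevant families of rays. Everything else is formal.
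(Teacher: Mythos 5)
Your proof is correct and is essentially the same argument the paper gives: fix a reference positively oriented disjoint triple, orient the target circle so that its $P$-image is positively oriented, and then use transitivity of $\Gamma$ on positively oriented disjoint triples together with $\Gamma$-equivariance of $P$ and the fact that $\Gamma$ acts by orientation-preserving homeomorphisms on both circles. The paper asserts the transitivity without comment, whereas you spell out the change-of-coordinates argument (cutting along the tripod $r_1\cup r_2\cup r_3\cup\{\infty\}$ to get a disk whose interior Cantor set accumulates onto exactly the three marked boundary points), which is the right justification.
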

\begin{proof}
	Pick some positively oriented triple $(r_1,r_2,r_3)$ as above. Then $P(r_1),P(r_2),P(r_3)$ are distinct by Lemma \ref{corr: distinct image}. Choose the orientation on $S^1$ to make $(P(r_1),P(r_2),P(r_3))$ positively oriented. For any other such triple $(r'_1,r'_2,r'_3)$ with positive orientation, there is some $g\in \Gamma$ such that $gr_i=r'_i$ for $i=1,2,3$. Then $$(P(r'_1),P(r'_2),P(r'_3))=(gP(r_1),gP(r_2),gP(r_3))$$ is also positively oriented.
	
\end{proof}

In the sequel, fix the orientation provided by Lemma \ref{lemma: p preserves order on disjoint triples}. 

We now describe an increasing filtration of $R$ by Cantor sets. This filtration depends
on the choice of an embedded circle $\gamma$ in the plane containing the Cantor set; we
call such a $\gamma$ an {\em equator}. Observe that an equator inherits a canonical 
orientation as the boundary of the complementary region containing infinity.

For any equator $\gamma$ we define the {\em $\gamma$-length} of a short ray $r$ to be
the minimal number of transverse intersections of $r$ with $\gamma$. 
Note that we do not count $\rayend(r)\in\gamma$ as an intersection, and 
that the $\gamma$-length might be infinite. Let $R_n(\gamma) \subset R$ denote the set of
short rays with $\gamma$-length $\le n$. Thus, for example, $R_0(\gamma)$ consists of
rays going straight from infinity to some point on the Cantor set without crossing $\gamma$.
The canonical orientation on $\gamma$ induces a cyclic ordering on $R_0(\gamma)$, which
agrees with the circular order on $R$ induced by its inclusion in the simple circle.

\begin{corr}\label{corr: p preserves order, simple version}
	For any set $T$ of mutually disjoint rays with distinct endpoints, the cyclic order on $T$ agrees with
	the cyclic order on $P(T)$. In particular, this holds with $T=R_0(\gamma)$ for equator $\gamma$ as above.
\end{corr}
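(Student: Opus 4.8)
The plan is to bootstrap from Lemma~\ref{lemma: p preserves order on disjoint triples}, which already handles triples of mutually disjoint rays with distinct endpoints, to an arbitrary finite subset $T$ of mutually disjoint rays with distinct endpoints by showing that the cyclic order is determined triple-by-triple. Recall that a cyclic order on a set is completely encoded by its restriction to all triples, subject to the cocycle (cyclic-order) condition; so it suffices to check that for every triple $\{r_1,r_2,r_3\}\subset T$, the triple $(P(r_1),P(r_2),P(r_3))$ is positively oriented exactly when $(r_1,r_2,r_3)$ is. But any three rays drawn from $T$ are themselves mutually disjoint with distinct endpoints, so this is precisely Lemma~\ref{lemma: p preserves order on disjoint triples} applied to that triple (using the orientation on $S^1$ fixed after that lemma). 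Hence the cyclic order on $T$ agrees with the cyclic order on $P(T)$ for every finite such $T$, and therefore for arbitrary $T$, since cyclic order is a property of finite subsets.

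First I would note that $P$ is injective on $T$: distinct rays in $T$ have distinct endpoints by hypothesis, so $P$ separates them by Lemma~\ref{corr: distinct image}. Thus $P(T)$ is genuinely a subset of $S^1$ of the same cardinality, and it makes sense to compare cyclic orders. Next I would recall the combinatorial fact that two cyclically ordered sets related by a bijection have the same cyclic order if and only if every $3$-element subset is carried to a $3$-element subset with the matching orientation; this reduces the claim to triples. Then I would invoke Lemma~\ref{lemma: p preserves order on disjoint triples} verbatim on each triple from $T$.

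For the last sentence of the statement, I would simply observe that $R_0(\gamma)$ consists of rays going straight from infinity to points on the Cantor set without crossing the equator $\gamma$; such rays are mutually disjoint except possibly at their endpoints, and they have distinct endpoints (two rays in $R_0(\gamma)$ with the same endpoint would coincide). So $T=R_0(\gamma)$ satisfies the hypotheses, and the general statement applies. Combined with the remark, already recorded in the text, that the canonical orientation on $\gamma$ induces on $R_0(\gamma)$ precisely the circular order coming from its inclusion in the simple circle, this shows $P$ restricted to $R_0(\gamma)$ intertwines the simple-circle order with the order on $P(R_0(\gamma))\subset S^1$.

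There is essentially no obstacle here; the content is entirely in Lemma~\ref{lemma: p preserves order on disjoint triples}, and this corollary is just the observation that agreement on all triples implies agreement of cyclic orders. The only thing to be slightly careful about is that the orientation of $S^1$ must be the one fixed after Lemma~\ref{lemma: p preserves order on disjoint triples}, so that all triples are compared with a consistent sign; once that is in place the argument is immediate.
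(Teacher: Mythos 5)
Your argument is exactly the paper's: a cyclic order is determined by its restriction to triples, and Lemma~\ref{lemma: p preserves order on disjoint triples} gives agreement on every triple from $T$. The extra remarks on injectivity via Lemma~\ref{corr: distinct image} and on $T=R_0(\gamma)$ are correct but just spell out what the paper leaves implicit.
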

\begin{proof}
	A cyclic order on a set is determined by its restriction to all triples. By Lemma 
	\ref{lemma: p preserves order on disjoint triples}, the cyclic orders on $T$ and $P(T)$ agree on triples, 
	so they are equal.
\end{proof}

\begin{lemma}
	For any equator $\gamma$ and for any finite $n$ the set $R_n(\gamma)$ 
	is a Cantor set in both the conical circle and the simple circle.
\end{lemma}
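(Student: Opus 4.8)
The plan is to show that $R_n(\gamma)$ is compact, totally disconnected, and perfect (nonempty without isolated points) in either circle, and observe that the two topologies coincide on this set since the quotient map $S^1_C \to S^1_S$ collapses intervals that are disjoint from $R\cup X \supset R_n(\gamma)$. For compactness, the key point is that $R_n(\gamma)$ is closed in $S^1_C$: if a sequence of short rays of $\gamma$-length $\le n$ converges to a geodesic ray $\delta$, then $\delta$ is simple (the simple set is closed by the first lemma), has an endpoint on the Cantor set rather than recurring (a long ray has infinitely many essential self-approaches, forcing $\gamma$-length to blow up, and a lasso is isolated in the simple set), and its $\gamma$-length is at most $n$ since $\gamma$-length is lower semicontinuous—an intersection pattern realizing a smaller number survives to the limit, or more carefully, the minimal intersection number cannot jump up under a limit of rays all realizing $\le n$. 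So $R_n(\gamma)$ is a closed subset of the compact set $S^1_C$, hence compact.

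For total disconnectedness: $R_n(\gamma)\subset R\cup X$, which is already totally disconnected since it is a Cantor set by Theorem~\ref{theorem:conical_action}; any subspace of a totally disconnected space is totally disconnected. It remains to prove $R_n(\gamma)$ is perfect, which is the substantive step. I would argue that no short ray $r$ of $\gamma$-length $\le n$ is isolated in $R_n(\gamma)$ by exhibiting nearby short rays of $\gamma$-length $\le n$ converging to $r$. The idea: let $k = \rayend(r)$, and let $K$ be a proper clopen subset of the Cantor set containing $k$ that is ``small''—in particular contained in a disk disjoint from all but the final arc of $r$, so that a homeomorphism supported in that disk does not change the $\gamma$-length of $r$ and fixes $r$ outside a neighborhood of $k$. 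Shrinking $K$ homothetically toward $k$ via mapping classes $\phi_j$ supported in that disk, and applying them to $r$ itself after a small ambient modification near $k$ that redirects $r$ to a different point of $K$, produces distinct short rays $r_j \ne r$ with $\gamma$-length $\le n$ and $r_j \to r$ in $S^1_C$. This is essentially the construction already used in the proof that $R$ is in the closure of every orbit (Figure~\ref{fig: shrink}), localized so as not to increase $\gamma$-length.

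The main obstacle is the perfectness argument: one must produce the approximating rays while simultaneously controlling two things—that the $\gamma$-length does not increase (which forces the modification to be supported away from $\gamma$, i.e., in a small disk near $\rayend(r)$ lying entirely on the infinity-side or a single complementary side of $\gamma$), and that the new rays are genuinely distinct from $r$ as isotopy classes and genuinely converge in the conical circle. Controlling convergence in $S^1_C$ is where I would invoke that modifications supported in a small disk near a Cantor point move the corresponding geodesic ray an arbitrarily small amount in $S^1_C$—the same continuity principle underlying Figure~\ref{fig: shrink} and the Countable Orbit Theorem. Once perfectness is established, I would close by noting that since $R_n(\gamma) \subset R\cup X$ and the collapsing map $S^1_C \to S^1_S$ is injective on $R\cup X$ and a homeomorphism onto its image there, $R_n(\gamma)$ with its subspace topology from $S^1_S$ is homeomorphic to $R_n(\gamma)$ in $S^1_C$; being compact, nonempty, totally disconnected and perfect, it is a Cantor set in both.
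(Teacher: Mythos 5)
Your overall strategy — show that $R_n(\gamma)$ is closed, totally disconnected, and perfect as a subset of the Cantor set $R\cup X$ in $S^1_C$, then push forward under the quotient to $S^1_S$ — matches the paper. Closedness, total disconnectedness, and the quotient step are all handled correctly. The problem is in the perfectness argument, which is where the real content lies.

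Your approximating rays are produced by applying mapping classes $\phi_j$ supported in a small disk $D$ near $\rayend(r)$, after a preliminary redirection of $r$ to another Cantor point in $K$. There is a genuine gap here: you claim a homeomorphism supported in $D$ ``does not change the $\gamma$-length of $r$,'' but this is false in general and is not established by what you say. The $\gamma$-length of $\phi_j(r)$ equals the $\phi_j^{-1}(\gamma)$-length of $r$; the equator $\phi_j^{-1}(\gamma)$ agrees with $\gamma$ outside $D$, but inside $D$ it can cross the final arc of $r$ arbitrarily many times, since $\phi_j$ need not preserve $\gamma$. Your proposed fix — taking the support ``away from $\gamma$, i.e.\ in a small disk lying entirely on one side of $\gamma$'' — is incompatible with the rest of the construction: since the Cantor set is contained in $\gamma$, any disk containing both $\rayend(r)$ and other Cantor points (which you need in order to redirect $r$) necessarily crosses $\gamma$. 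So the mechanism by which you control $\gamma$-length is not available, and the step asserting $\gamma\text{-length}(r_j)\le n$ is unjustified.

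The paper avoids this entirely by not using mapping classes at all for this step. It observes that the $n$ intersections of $r$ with $\gamma$ cut $r$ into $n+1$ essential arcs lying on alternating sides of $\gamma$, and it modifies only the last arc: slide $\rayend(r)$ along $\gamma$ to nearby Cantor points and replace the final arc (staying on the same side of $\gamma$, adding no crossings). This is a direct geometric perturbation of the curve itself, it visibly has the same $\gamma$-length $n$, and the resulting rays converge to $r$ in $S^1_C$. If you want to salvage your version, you should drop the conjugating homeomorphisms $\phi_j$ and instead directly reroute the last arc, as the paper does — that is essentially the special case of your construction in which the ``ambient modification'' is all that is needed, and it sidesteps the $\gamma$-length issue.
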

\begin{proof}
	We prove the lemma in the conical circle by showing that $R_n(\gamma)$ is a perfect subset of the Cantor set 
	$R\cup X$. First, note that $R_n(\gamma)$ is closed since long rays intersect $\gamma$ infinitely many
	times, and since having at least $n+1$ transverse intersections with $\gamma$ is an open condition. Second, a 
	short ray $r$ of $\gamma$-length $n$ is a limit of short rays of the same length. To see this, note that 
	the intersections with $\gamma$ cut $r$ into $n+1$ essential arcs on alternating sides of 
	$\gamma$. Perturbing the last arc by moving	$\rayend(r)$ to nearby points in the Cantor set 
	along $\gamma$ produces a sequence of short rays with the same $\gamma$-length 
	converging to $r$; see Figure \ref{fig: limit_same_len}. Now, the quotient map 
	from the conical circle to the simple circle is injective on $R$ and therefore 
	the image of the compact subset $R_n(\gamma)$ in the simple circle is also a Cantor set.
\end{proof}

\begin{figure}
	\labellist
	\small 
	
	\pinlabel $\infty$ at 158 165
	\pinlabel $\gamma$ at 8 165
	\pinlabel $r$ at 195 18
	\endlabellist
	\centering
	\includegraphics[scale=0.5]{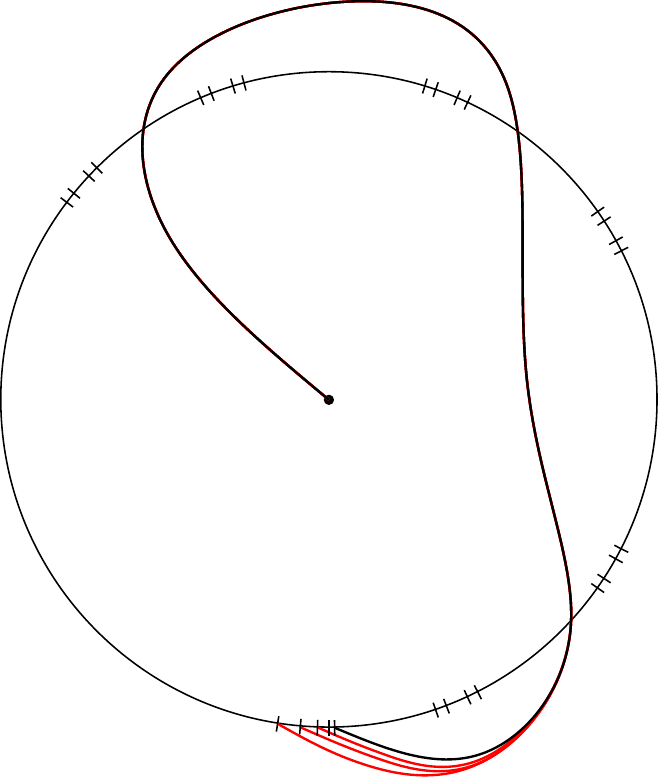}
	\caption{A sequence of short rays of the same $\gamma$-length limiting to $r$}\label{fig: limit_same_len}
\end{figure}

\begin{lemma}
	Any ray of $\gamma$-length $n$ is the limit on both sides of rays of $\gamma$-length $n+1$.
\end{lemma}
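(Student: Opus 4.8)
The plan is to produce the required approximants by modifying $r$ only inside a small disk around its endpoint $\rayend(r)$, where I will insert a single extra \emph{essential} crossing with $\gamma$ and steer the modification to whichever side of $r$ I want; the same pattern lies behind Figure~\ref{fig: limit_same_len} for the preceding lemma, only now the crossing count goes up by one instead of staying fixed.

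First I would fix a representative of $r$ meeting $\gamma$ transversely in exactly $n$ points $p_1,\dots,p_n$ (ordered along $r$ from $\infty$, necessarily essential), which, exactly as in the proof of the preceding lemma, cut $r$ into $n+1$ essential arcs $a_0,\dots,a_n$ on alternating sides of $\gamma$; the last arc $a_n$ runs from $p_n$ to $k\defeq\rayend(r)$ inside one complementary region $U$ of $\gamma$ in $S^2$. Then I would choose a small disk $B$ centred at $k$ that meets $\gamma$ in a single sub-arc through $k$, is disjoint from $p_1,\dots,p_n$ and from $a_0,\dots,a_{n-1}$, and meets $r$ only in a terminal sub-arc of $a_n$. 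Since the Cantor set is perfect, on $\gamma$ arbitrarily close to $k$ there is a point $k^{+}$ of the Cantor set and a further Cantor point $m^{+}$ lying strictly between $k$ and $k^{+}$ along $\gamma$, and symmetrically $m^{-},k^{-}$ on the other side of $k$. (If $k$ is an endpoint of a gap of $\gamma$ on one side, the matching modification is instead routed across that gap, with $k^{\mp}$ taken just beyond its far endpoint; this changes nothing essential, so I would suppress it.)

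Next I would define $r^{+}$ to agree with $r$ outside $B$ and, inside $B$, to leave $a_n$ just before $k$, cross $\gamma$ once transversely at a point $q^{+}$ strictly between $k$ and $m^{+}$, then run through the other half-disc on the far side of $m^{+}$ and terminate at $k^{+}$; build $r^{-}$ symmetrically from $q^{-},m^{-},k^{-}$. Each $r^{\pm}$ is, up to isotopy, an embedded proper ray to a Cantor point — hence a short ray — since inside $B$ one has merely swapped one embedded terminal arc for another with the same entry point. Shrinking $B$ (hence $k^{\pm}$) forces $r^{\pm}$ to agree with $r$ for as long as we please before diverging, so $r^{\pm}\to r$ in $S^1_C$, and hence in $S^1_S$. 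By construction $r^{+}$ peels away from $r$ toward the side of $k$ on which $k^{+}$ lies and $r^{-}$ toward the opposite side; since the circular order on short rays sharing a long common initial segment is read off from the direction in which they diverge along $\gamma$ near their common endpoint, $r^{+}$ and $r^{-}$ lie on opposite sides of $r$.

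Finally I would check that $\gamma$-length$(r^{\pm})=n+1$. The representative above crosses $\gamma$ only at $p_1,\dots,p_n,q^{\pm}$, so the length is at most $n+1$; for the reverse inequality, $p_1,\dots,p_n$ stay essential (they, and $a_0,\dots,a_{n-1}$, were untouched), and the new crossing $q^{\pm}$ is essential because undoing it would demand an innermost-bigon isotopy pushing the terminal sub-arc of $r^{\pm}$ across the short sub-arc of $\gamma$ from $q^{\pm}$ to $k^{\pm}$ — but that sub-arc contains $m^{\pm}$, a Cantor point, which such a bigon cannot avoid. The main obstacle is precisely this last point: turning ``a Cantor point trapped between the terminal point of the ray and the rest of the ray on one side of $\gamma$ obstructs reducing crossings with $\gamma$'' into a clean argument (e.g.\ via a bigon-removal normal form for arcs meeting $\gamma$), together with the small amount of extra bookkeeping needed when $\rayend(r)$ is a gap endpoint.
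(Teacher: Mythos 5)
The main idea — modify $r$ only near $\rayend(r)$, insert one new crossing with $\gamma$, and use a Cantor point trapped between the new crossing and the new terminal point to block an isotopy removing it — is the same as in the paper, and the non-gap case (Cantor points accumulating on $k=\rayend(r)$ from both sides) goes through essentially as the paper's construction does (the paper keeps the terminal point at $k$ and routes through a nearby gap, you move the terminal point to nearby Cantor points $k^\pm\to k$; both work).

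However, your handling of the case where $k$ is an endpoint of a gap $I$ has a genuine bug. You take $k^-$ ``just beyond the far endpoint'' of $I$, so $k^-$ is bounded away from $k$. But then shrinking $B$ does not change the proper isotopy class of $r^-$ at all: the crossing pattern with $\gamma$ and the terminal Cantor point $k^-$ are unchanged, and merely sliding the departure point of $r^-$ from $r$ further along $r$ is an isotopy. So the geodesic representatives $r^-$ you produce form a \emph{constant} sequence in $S^1_C$, not one converging to $r$. (Worse, for small $B$ the point $k^-$ isn't even in $B$, so ``agree with $r$ outside $B$'' can't be satisfied.) The point you need, and which the paper supplies, is that on the gap side the approximants must have their \emph{terminal point} move towards $k$; since this is impossible from the $I$ side (the nearest Cantor point is the far endpoint of $I$), the paper instead sends the ray across $\gamma$ through $I$ and then terminates it at a Cantor point $p$ on the \emph{opposite} side of $k$, with $p\to k$; the new crossing stays essential because the short arc of $\gamma$ from the crossing to $p$ contains $k$ itself. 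Your variant cannot be repaired by routing entirely within the gap side. Separately, your claim that $p_1,\dots,p_n$ ``stay essential because they were untouched'' is not immediate — essentiality is a global property of the arc — but this is a shared informality with the paper and is a much smaller concern than the gap-endpoint error.
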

\begin{proof}
	Let $r$ be any ray of $\gamma$-length $n$. Let $\alpha_n$ be the segment of $r$ connecting its last 
	transverse intersection with $\gamma$ to $\rayend(r)$. The rays we present below can be straightened
	to geodesics up to isotopy, which does not affect the intersection patterns since geodesics minimize
	intersection numbers.
	
	If $\rayend(r)$ is not the boundary of any complementary interval of the Cantor set in $\gamma$, then 
	there are sequences of complementary intervals limiting to $\rayend(r)$ from both sides. In this case,
	we can replace $\alpha_n$ by an arc that runs through a nearby complementary interval $I$ and then limits to
	$\rayend(r)$ without further intersecting $\gamma$; see the blue rays in Figure 
	\ref{fig: limit_longer_len}. Letting $I$ limit to $\rayend(r)$ from the left or
	right creates sequences of rays of $\gamma$-length $n+1$ limiting to $r$ on both sides.
	
	If $\rayend(r)$ is the boundary of some complementary interval $I$, then the construction above can be 
	done only from the side not witnessing $I$. For the other side, we can replace $\alpha_n$ by an arc 
	running through $I$ and then ending on a point $p$ near $\rayend(r)$ in the Cantor set without further
	intersecting $\gamma$; see the red rays in Figure~\ref{fig: limit_longer_len}. Letting $p$ 
	converge to $\rayend(r)$, we obtain the desired sequence of rays of $\gamma$-length $n+1$.
	
	\begin{figure}
		\labellist
		\small 
		
		\pinlabel $\infty$ at 158 165
		\pinlabel $\gamma$ at 8 165
		\pinlabel $r$ at 192 18
		\endlabellist
		\centering
		\includegraphics[scale=0.5]{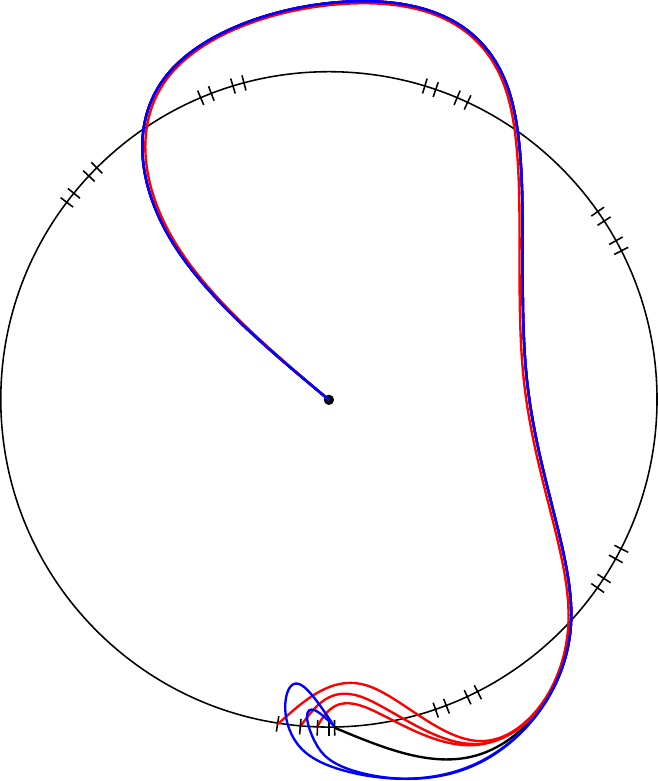}
		\caption{Two sequences of short rays, in blue and red, of $\gamma$-length $4$ limiting to $r$,
			which has $\gamma$-length $3$, from two sides}\label{fig: limit_longer_len}
	\end{figure}
\end{proof}

\begin{corr}\label{corr: length of r+-}
	For any complementary interval $(r_-,r_+)$ of $R_n(\gamma)$ in the simple circle, the rays $r_-$ and 
	$r_+$ have $\gamma$-length exactly $n$.
\end{corr}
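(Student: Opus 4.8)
The plan is to derive this immediately from the previous lemma (any ray of $\gamma$-length $m$ is a limit on both sides of rays of $\gamma$-length $m+1$) together with the observation that a complementary interval of $R_n(\gamma)$ contains no point of $R_n(\gamma)$. Since $r_-$ and $r_+$ are the endpoints of a complementary interval of $R_n(\gamma)$, they belong to $R_n(\gamma)$, so each has $\gamma$-length at most $n$; it therefore suffices to rule out $\gamma$-length strictly less than $n$.

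I would argue by contradiction. Suppose $r_-$ has $\gamma$-length $m$ with $m<n$ (the argument for $r_+$ is symmetric). By the previous lemma, $r_-$ is a limit of rays of $\gamma$-length $m+1$ from both sides; in particular there is a sequence of such rays converging to $r_-$ from the side on which the interval $(r_-,r_+)$ lies. Because $m+1\le n$, every one of these rays lies in $R_n(\gamma)$. A sequence converging to $r_-$ from the $(r_-,r_+)$-side is eventually contained in the open interval $(r_-,r_+)$, so $(r_-,r_+)$ meets $R_n(\gamma)$, contradicting that it is a complementary interval. Hence $r_-$, and likewise $r_+$, has $\gamma$-length exactly $n$.

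The only point that needs a little care — and the closest thing to an obstacle here — is that the limit statement and the notion of ``side'' are being used in the simple circle, whereas the previous lemma was most naturally proved among geodesics, i.e.\ in the conical circle. This is not a genuine difficulty: $R$ embeds in $S^1_S$ and the quotient map $S^1_C\to S^1_S$ is monotone, so a sequence of short rays converging to a short ray on a prescribed side of $S^1_C$ also converges on the same side of $S^1_S$ (the points involved stay distinct because the quotient is injective on $R$). With this remark the previous lemma applies verbatim in $S^1_S$, and the argument above goes through.
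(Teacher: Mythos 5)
Your proof is correct and is exactly the argument the authors intend: the paper states this as an unproved corollary of the preceding lemma (a ray of $\gamma$-length $m$ is a two-sided limit of rays of $\gamma$-length $m+1$), and your contradiction argument is precisely how that lemma yields the statement. Your closing remark about passing from the conical circle to the simple circle via the monotone quotient that is injective on $R$ is a reasonable point of care, but it is not an issue the paper addresses explicitly.
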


It follows that if $r$ has $\gamma$-length $>n$ then there are two rays $r^n_-,r^n_+$ 
of $\gamma$-length $n$ associated to $r$ so that $(r^n_-,r^n_+)$ is the complementary 
interval of $R_n(\gamma)$ containing $r$, and which satisfy the following properties:
\begin{enumerate}
	\item $(r^n_-,r,r^n_+)$ is positively oriented on the simple circle; and
	\item $(r^n_-,s,r^n_+)$ is negatively oriented for any $s\in R_n(\gamma)$ other than $r^n_{\pm}$.
\end{enumerate}

\begin{lemma}\label{lemma: same first intersection}
	For any $n\ge0$ and any complementary interval $(r_-,r_+)$ of $R_n(\gamma)$, there is a complementary
	interval $I$ of the Cantor set in $\gamma$ such that all rays $r\in(r_-,r_+)$ first intersect 
	with $\gamma$ at some point in $I$.
\end{lemma}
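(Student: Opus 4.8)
The plan is to induct on $n$. For $n=0$, note that a ray in $R_0(\gamma)$ goes straight from $\infty$ to its endpoint in the Cantor set without crossing $\gamma$; the canonical orientation on $\gamma$ induces a cyclic order on $R_0(\gamma)$ which, by the observation recorded just after the definition of $R_n(\gamma)$, agrees with the circular order on $R$ inside the simple circle, hence (by Corollary~\ref{corr: p preserves order, simple version}) with the order on $P(R_0(\gamma))$. A complementary interval $(r_-,r_+)$ of $R_0(\gamma)$ on the simple circle therefore corresponds to a ``gap'' in the set of endpoints of $R_0(\gamma)$ along $\gamma$; I would show this gap is contained in a single complementary interval $I$ of the Cantor set in $\gamma$. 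Indeed, if the gap between $\rayend(r_-)$ and $\rayend(r_+)$ along $\gamma$ contained a point $k$ of the Cantor set, the straight ray from $\infty$ to $k$ would lie in $R_0(\gamma)$ and between $r_-$ and $r_+$ in cyclic order, contradicting that $(r_-,r_+)$ is a complementary interval. So every $r\in(r_-,r_+)$ must first meet $\gamma$ inside this $I$: any such $r$ has $\gamma$-length $\geq 1$, its first intersection point $q$ with $\gamma$ determines a straight sub-ray from $\infty$ to $q$ which lies in $R_0(\gamma)$, and the position of $q$ along $\gamma$ pins down where $r$ sits in the cyclic order; for $r$ to lie in the interval $(r_-,r_+)$, the point $q$ must lie in the arc of $\gamma$ strictly between $\rayend(r_-)$ and $\rayend(r_+)$, which is exactly (contained in) $I$.

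For the inductive step, let $(r_-,r_+)$ be a complementary interval of $R_{n+1}(\gamma)$. By Corollary~\ref{corr: length of r+-}, $r_-$ and $r_+$ both have $\gamma$-length exactly $n+1$. Since $R_n(\gamma)\subset R_{n+1}(\gamma)$, there is a complementary interval $(s_-,s_+)$ of $R_n(\gamma)$ with $(r_-,r_+)\subseteq (s_-,s_+)$; apply the inductive hypothesis to get a complementary interval $I'$ of the Cantor set in $\gamma$ such that every ray in $(s_-,s_+)$ — in particular every ray in $(r_-,r_+)$, as well as $r_-$ and $r_+$ themselves — first meets $\gamma$ in $I'$. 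That already gives the conclusion with $I=I'$. The only thing to check is that $r_-$ and $r_+$ genuinely lie in the open interval $(s_-,s_+)$ and not at its endpoints: since $r_\pm$ have $\gamma$-length $n+1>n$ they are not in $R_n(\gamma)$, so they cannot equal $s_\pm$, and being squeezed between elements of $R_{n+1}(\gamma)$ forces them into the interior of $(s_-,s_+)$.

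The main subtlety — and the step I would write most carefully — is the base case, specifically the claim that the position of a ray $r$ in the cyclic order on the simple circle, restricted to the interval $(r_-,r_+)$, is governed entirely by the location along $\gamma$ of its \emph{first} intersection point with $\gamma$ (when that point lies in $I$). This is where I would use Lemma~\ref{lemma: p preserves order on disjoint triples} / Corollary~\ref{corr: p preserves order, simple version}: given $r$ with first intersection $q\in I$, I can build, for points $k$ of the Cantor set just outside $I$ on either side, straight rays $r^{(k)}\in R_0(\gamma)$ disjoint from $r$ and from each other, forming disjoint triples whose cyclic order on $\gamma$ I can read off directly; comparing $r$ against these via the order-preserving property of $P$ localizes $r$ inside $(r_-,r_+)$ and forces $q\in I$. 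Care is needed because $r$ itself is a long or short ray that may re-cross $\gamma$ many times, so ``disjointness'' of the comparison rays must be arranged on the nose (straightening to geodesics, as done elsewhere in this section, since geodesics minimize intersection number and disjoint isotopy classes have disjoint geodesic representatives). Once that localization principle is established, everything else is bookkeeping with the filtration $R_0(\gamma)\subset R_1(\gamma)\subset\cdots$.
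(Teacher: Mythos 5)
Your inductive route is genuinely different from the paper's: the paper handles $n=0$ by appealing to the concrete description of $R_0(\gamma)$ and then gives a self-contained sector argument for each $n\ge 1$ (if $r_-$ and $r_+$ first met $\gamma$ in different complementary intervals, a short ray $r_m\in R_0(\gamma)$ would be trapped in the sector they bound, contradicting that $(r_-,r_+)$ misses $R_n(\gamma)$). Your inductive step is actually cleaner: since $R_n(\gamma)\subset R_{n+1}(\gamma)$, a complementary interval of $R_{n+1}(\gamma)$ sits inside one of $R_n(\gamma)$, and the conclusion for the larger interval is inherited verbatim; your check that $r_\pm$ lie in the open interval $(s_-,s_+)$ (via Corollary~\ref{corr: length of r+-}) is correct. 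So the weight of your argument falls entirely on the base case, and there you have a genuine gap. The assertion that ``the position of $q$ along $\gamma$ pins down where $r$ sits in the cyclic order'' is exactly what must be proved, and your proposed fix routes through the map $P$ and Corollary~\ref{corr: p preserves order, simple version}, which is the wrong tool: the lemma is a purely topological statement about the intrinsic cyclic order on the simple circle (it is used, in fact, as an ingredient in proving that $P$ is order-preserving in Lemma~\ref{lemma: p preserves order}), and $P$ only tells you that the intrinsic order on a set of disjoint rays is transported into $S^1$, not how to read off the intrinsic order itself. Also a small error: the ``straight sub-ray from $\infty$ to $q$'' is not in $R_0(\gamma)$, since $q$ lies in a gap of the Cantor set and is therefore not an endpoint of a short ray. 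What you actually need is the topological fact that, for geodesic rays emanating from the cusp at $\infty$, the cyclic order on $S^1_C$ (equivalently $S^1_S$) is determined by the initial segments of the rays up to their first crossing of $\gamma$ (the straight arcs $r_-$, $r_+$, and the initial arc of $r$ from $\infty$ to $q$ are pairwise disjoint and cut the annulus outside $\gamma$ into sectors); this should be argued directly in the conical cover rather than via $P$.
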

\begin{proof}
	When $n=0$, we know from our concrete description of $R_0(\gamma)$ that $\rayend(r_-)$ and 
	$\rayend(r_+)$ are the boundary points of some complementary interval $I$ of the Cantor set in $\gamma$,
	and the rays in $(r_-,r_+)$ are exactly those that first intersect with $\gamma$ at some point in $I$.
	
	If $n>0$, by Corollary \ref{corr: length of r+-}, $r_{\pm}$ has length $n\ge1$ and intersects $\gamma$
	transversely. We claim $r_-$ and $r_+$ first intersect $\gamma$ in the same complementary interval $I$ 
	of the Cantor set in $\gamma$. If not, let $p_-$ and $p_+$ be the first intersection of $r_-$ and $r_+$ 
	with $\gamma$ respectively. Then the arc on $\gamma$ from $p_-$ to $p_+$ in the positive orientation contains some 
	point in the Cantor set, which determines a short ray $r_m\in R_0(\gamma)$. The initial arcs on $r_+,r_-$
	up to $p_+,p_-$ together with the arc on $\gamma$ from $p_-$ to $p_+$ in the positive orientation form 
	a sector, which contains $r_m$. This shows that $r_m\in (r_-,r_+)$, 
	contradicting the fact that $(r_-,r_+)$ is a complementary interval of $R_n(\gamma)$. This proves the 
	claim, from which it follows that all rays in $(r_-,r_+)$ are forced to first intersect the interval $I$ 
	as well.
\end{proof}

\begin{lemma}\label{lemma: good circle}
	For any finite collection of short rays $r_1,\ldots, r_n$ with distinct endpoints, there is 
	an equator $\gamma$ containing the Cantor set such that all $r_i$ simultaneously 
	have finite $\gamma$-length.
\end{lemma}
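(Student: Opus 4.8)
The plan is to build an equator adapted to the finitely many rays by a regular-neighborhood argument in $S^2$: push almost all of each $r_i$ into a finite-type planar subsurface whose complementary regions are disks, and then thread the equator through the clopen pieces of the Cantor set cut out by that subsurface, in such a way that it meets the rays only finitely often.

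First I would reduce to convenient representatives. Write $K$ for the Cantor set, so $\Omega=\R^2\setminus K$ and short rays run from $\infty\in S^2$ to points of $K$; since $\gamma$-length depends only on the proper isotopy class of a ray, I may isotope the $r_i$ freely. Using that the endpoints $\rayend(r_i)$ are distinct and that $r_i\cap K=\{\rayend(r_i)\}$, arrange the $r_i$ to have disjoint germs at $\infty$, and cut off pairwise disjoint initial segments $\epsilon_i$ near $\rayend(r_i)$, so small that $\epsilon_i\cap K=\{\rayend(r_i)\}$. Let $r_i^-$ be the remaining subarc of $r_i$; then $\widehat G\defeq\{\infty\}\cup\bigcup_i r_i^-$ is a finite connected graph in $S^2$ that is disjoint from $K$. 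Take a closed regular neighborhood $N$ of $\widehat G$, chosen small enough to miss $K$ and so that each $\epsilon_i$ exits $N$ exactly once. As a compact connected planar surface with boundary, $N$ is a disk-with-holes; since the complement of a regular neighborhood of a connected graph in $S^2$ is a disjoint union of open disks, $S^2\setminus\mathrm{int}(N)=\bar E_1\sqcup\cdots\sqcup\bar E_m$, one closed disk for each boundary circle $\beta_l$ of $N$. Because $K$ misses $N$, it is partitioned into the clopen pieces $K_l\defeq K\cap E_l$; and each $\epsilon_i$ continues out of $\beta_{l(i)}$ as a germ $\delta_i\subset\bar E_{l(i)}$ running from a point of $\beta_{l(i)}$ to $\rayend(r_i)\in K_{l(i)}$, with the $\delta_i$ pairwise disjoint and meeting $K$ only at the $\rayend(r_i)$.

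Next, inside each closed disk $\bar E_l$ — where the only pieces of any ray present are the finitely many disjoint germs $\delta_i$ with $l(i)=l$ — I would construct an embedded arc $\sigma_l$ with both endpoints on $\beta_l$ (placed away from the endpoints of the $\delta_i$), with $K_l\subset\sigma_l$, and meeting each $\delta_i$ transversely in finitely many points and only away from $\rayend(r_i)$. Indeed, the $\delta_i$ together with $\beta_l$ cut $\bar E_l$ into finitely many sub-disks whose dual graph is a tree, and $K_l$ is the disjoint union of the (compact, totally disconnected) pieces it meets in these sub-disks together with the finitely many points $\rayend(r_i)$, each on the boundary of two of them. Using the classical fact that a compact totally disconnected subset of a disk lies on an embedded arc, I put such an arc through the portion of $K_l$ in each sub-disk and join them along a traversal of the dual tree; then each $\delta_i$ is crossed only a bounded number of times. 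Finally I assemble $\gamma$ by joining the arcs $\sigma_l$ with pairwise disjoint arcs in $\mathrm{int}(N)$, avoiding $\infty$, into a single embedded circle — a routine planarity argument, since $N$ is connected and we are free to choose the endpoints on the $\beta_l$. Then $\gamma$ is an embedded circle with $K=\bigsqcup_l K_l\subset\bigsqcup_l\sigma_l\subset\gamma$, hence an equator. Since $r_i$ meets $N$ in a single arc and meets $\bar E_{l(i)}$ in the germ $\delta_i$, after putting things in general position $r_i\cap\gamma$ consists of the finitely many points where $r_i\cap N$ crosses the connecting arcs, together with the finite set $\delta_i\cap\sigma_{l(i)}$, all other potential intersections lying on $\partial N$ and being excluded by our choice of endpoints; so $r_i$ has finite $\gamma$-length for every $i$.

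The main obstacle is the bookkeeping in the middle step — producing, in each complementary disk, an embedded arc that sweeps up the entire clopen Cantor piece while crossing each ray-germ only finitely often, and then gluing all of these together with arcs through $N$ into a single genuinely embedded circle carrying $K$. Both of these rest on standard inputs (compact totally disconnected planar sets lie on arcs; a disk cut along disjoint properly embedded arcs has a tree dual graph; a connected surface-with-boundary admits the disjoint connecting arcs needed), which keep the construction finite at every stage; the role of the hypothesis that the $r_i$ are finitely many with distinct endpoints is precisely to make $\widehat G$ a \emph{finite} graph \emph{disjoint} from $K$.
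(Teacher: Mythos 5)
Your approach is genuinely different from the paper's: the paper keeps an arbitrary initial equator and modifies it one ray at a time inside a small proper dividing disk about $\rayend(r_i)$, using transitivity of the mapping class group of the disk minus a Cantor set; you instead build $\gamma$ from scratch out of a regular neighborhood $N$ of the rays. The reduction to the finite graph $\widehat G$, the choice of $N$, and the final assembly of the $\sigma_l$ through $N$ are all fine. The gap is in the middle step, where you construct the arcs $\sigma_l$.

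You assert that ``the $\delta_i$ together with $\beta_l$ cut $\bar E_l$ into finitely many sub-disks whose dual graph is a tree,'' with each $\rayend(r_i)$ on the boundary of two of them. This is false: each $\delta_i$ runs from $\beta_l$ to the \emph{interior} point $\rayend(r_i)\in K_l$, and an arc from the boundary of a disk to an interior point does not separate the disk. Cutting $\bar E_l$ along all of the $\delta_i$ with $l(i)=l$ produces a single disk, the dual graph is a single vertex, and the traversal argument yields nothing. The actual difficulty, which your argument never confronts, is that $K_l$ may accumulate on $\rayend(r_i)$ from \emph{both} sides of $\delta_i$, so an arc through $K_l$ must approach $\rayend(r_i)$ from both sides; nothing in the Moore--Kline theorem prevents the arc it produces from crossing $\delta_i$ infinitely often in the process. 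The conclusion is still attainable, for instance by first extending each $\delta_i$ past $\rayend(r_i)$ to a \emph{properly} embedded arc $\hat\delta_i\subset\bar E_l$ meeting $K_l$ only at $\rayend(r_i)$ (possible because $K_l$ is compact and zero-dimensional, hence does not separate any neighborhood of $\rayend(r_i)$), with the $\hat\delta_i$ pairwise disjoint; only \emph{then} do the sub-disks, the dual tree, and the bounded crossing count exist. As written, the claim that $\sigma_l$ crosses each $\delta_i$ only finitely often is unjustified.
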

\begin{proof}
	Start with an arbitrary equator $\gamma$. We will modify $\gamma$ so 
	that $r_1$ has finite $\gamma$-length without changing the $\gamma$-lengths of $r_i$ for all $i>1$. Let $D$
	be a small proper dividing disk that contains a neighborhood of $\rayend(r_1)$ and is disjoint from $r_i$
	for all $i>1$. Put $r_1$ in general position with $\gamma$ and $\partial D$. Then $r_1\cap D$ has 
	finitely many components, only one of which limits to $\rayend(r_1)$. Shrinking $D$ by 
	removing finitely many bigons, one at a time, we may assume there are no other 
	components of $r_1\cap D$. Then $r_1\cap D$ is a ray in $D$ minus a Cantor set 
	starting from a marked point on the boundary. The mapping class group 
	of the disk minus a Cantor set acts transitively on the set of such rays for the same reason that 
	$\Gamma$ acts transitively on $R$; see e.g. \cite[Lemma~2.6.2]{Bavard_Walker_1}.
	Now, $\gamma\cap D$ has 
	finitely many components, one of which contains $\rayend(r_1)$; see Figure \ref{fig: good_circle}. 
	Thus there is some ray $r$ in $D$ disjoint from $\gamma\cap D$ and some $g\in \Gamma$ supported in 
	$D$ such that $g(r)=r_1\cap D$. Then $r_1\cap D$ is disjoint from $g(\gamma)\cap D$. Replacing each 
	component of $\gamma\cap D$ by its corresponding component in $g(\gamma)\cap D$, we obtain the desired 
	modification of $\gamma$ that makes the $\gamma$-length of $r_1$ finite.
\end{proof}

\begin{figure}
	\labellist
	\small 
	
	\pinlabel $\gamma$ at 40 65
	\pinlabel $r_1$ at 192 25
	\pinlabel $r$ at 200 45
	\pinlabel $D$ at 150 10
	\endlabellist
	\centering
	\includegraphics[scale=1]{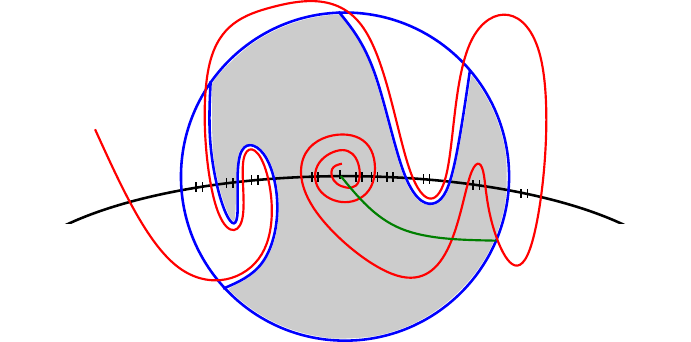}
	\caption{We shrink the proper dividing (round) disk to the smaller shaded disk $D$ such that $D\cap r_1$ 
		has only one component. There is a ray $r$ in $D$ starting from the same point in $\partial D$ as 
		$D\cap r_1$ and (whose interior is) disjoint from $\gamma$.}\label{fig: good_circle}
\end{figure}

\begin{lemma}\label{lemma: p preserves order}
	The map $P: R\to S^1$ preserves the circular order. In particular, it is injective.
\end{lemma}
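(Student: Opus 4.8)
\section*{Proof proposal}

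The plan is to show that the cyclic order is preserved on every triple of short rays, first reducing to triples with pairwise distinct endpoints and then inducting along the filtration $R_0(\gamma)\subset R_1(\gamma)\subset\cdots$ of a conveniently chosen equator, with the disjoint case (Corollary~\ref{corr: p preserves order, simple version}) as the base of the induction.

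\textbf{Reduction to distinct endpoints.} By Theorem~\ref{theorem:conical_action} the set $R\cup X$ is the closure of $R$, and $S^1_S$ is obtained from it by collapsing gaps, so $R$ is dense in $S^1_S$. If $r,r'\in R$ share an endpoint $k$, choose short rays $t_-,t_+$ in the interiors of the two arcs of $S^1_S$ cut off by $r$ and $r'$; this is possible because only countably many short rays have endpoint $k$, whereas every nonempty open arc of $S^1_S$ meets $R$ in an uncountable set (otherwise $R$ would be a countable union of countable sets), so $t_\pm$ may be taken with endpoints distinct from $k$ and from each other. The triples built from $t_-,t_+$ together with $r$ or $r'$ then have pairwise distinct endpoints, so — granting the distinct-endpoint case — $P$ preserves their cyclic orders; this forces $P(r)$ and $P(r')$ into different complementary arcs of $\{P(t_-),P(t_+)\}$, hence $P(r)\neq P(r')$, and it pins down the cyclic order of $(r,r',t)$ for any further ray $t$. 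Thus it suffices to treat triples with pairwise distinct endpoints, and injectivity of $P$ then follows from strict order preservation.

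\textbf{The filtration induction.} Fix a positively oriented triple $(r_1,r_2,r_3)$ with pairwise distinct endpoints. By Lemma~\ref{lemma: good circle} choose an equator $\gamma$ with $r_1,r_2,r_3$ all of finite $\gamma$-length, and let $N$ be the maximum, so $r_1,r_2,r_3\in R_N(\gamma)$. I will prove by induction on $n$, for every equator $\gamma$, that $P|_{R_n(\gamma)}$ is a cyclic-order embedding and sends every complementary interval $(g_-,g_+)$ of $R_n(\gamma)$ into the complementary interval of $\overline{P(R_n(\gamma))}$ bounded by $P(g_-)$ and $P(g_+)$. The base case $n=0$ is Corollary~\ref{corr: p preserves order, simple version} for the first assertion; for the second, the boundary rays $g_\pm$ run straight to the endpoints of a complementary arc $I$ of the Cantor set in $\gamma$, while any ray $\rho$ in the gap first crosses $\gamma$ through the interior of $I$ and then leaves the outer disk, so $g_-,\rho,g_+$ are mutually disjoint with distinct endpoints, and Corollary~\ref{corr: p preserves order, simple version} applied to $(g_-,\rho,g_+)$ (and to $(g_-,s,g_+)$ for $s\in R_0(\gamma)$ outside the gap) identifies the target arc.

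\textbf{Inductive step.} Take $x,y,z\in R_n(\gamma)$ positively oriented. If no two of them lie in a common complementary interval of $R_{n-1}(\gamma)$, then each lies on $R_{n-1}(\gamma)$ or inside a gap of it, these gaps/points are cyclically ordered consistently with $(x,y,z)$, and by the inductive hypothesis $P$ carries $R_{n-1}(\gamma)$ to an order-isomorphic copy with each gap landing in the corresponding gap of the image, so $(P(x),P(y),P(z))$ is positively oriented. The essential case is that two of them, say $x$ and $y$, lie in a common gap $G=(g_-,g_+)$ of $R_{n-1}(\gamma)$: by Corollary~\ref{corr: length of r+-} they have $\gamma$-length exactly $n$, and by Lemma~\ref{lemma: same first intersection} every ray in $G$ — in particular $x$, $y$ and $g_\pm$ — first crosses $\gamma$ through a single complementary arc $I$ of the Cantor set. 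Cutting off this common initial crossing presents the $\gamma$-length-$n$ rays of $G$ as $\gamma$-length-$(n-1)$ rays in the disk-minus-Cantor-set on the far side of $I$, whose mapping class group embeds in $\Gamma$ and acts transitively on such rays (as in the proof of Lemma~\ref{lemma: good circle}); the analogues for this subsurface of the facts used above follow from their ambient counterparts, and together with the inductive hypothesis they yield that $P$ preserves the cyclic order of $x,y$ and of $g_\pm$. Combined with the previous case this gives the claim for $(x,y,z)$, and the ``gaps to gaps'' half at level $n$ is obtained by the same cutting argument applied to $(g_-,\rho,g_+)$. Applying the statement at level $N$ to $r_1,r_2,r_3$ finishes the proof.

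\textbf{Main obstacle.} Everything except the last sub-case is bookkeeping about the Cantor filtration. The difficulty is that two short rays in the same gap of $R_{n-1}(\gamma)$ may genuinely intersect, and so cannot be disentangled by any element of $\Gamma$; the whole content is the reduction via the common first crossing (Lemma~\ref{lemma: same first intersection}), and making it precise — transporting Lemma~\ref{lemma: p preserves order on disjoint triples} and Corollary~\ref{corr: distinct image} to the far-side subsurface, or equivalently iterating the argument with a strictly decreasing crossing budget — is the delicate point I expect to occupy most of the work.
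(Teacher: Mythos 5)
Your overall skeleton — reduce to triples with distinct endpoints, choose an equator by Lemma~\ref{lemma: good circle}, then induct along $R_n(\gamma)$ with Corollary~\ref{corr: p preserves order, simple version} as the base and a gaps-to-gaps statement folded into the inductive hypothesis — matches the paper's proof closely, and the reduction to distinct endpoints via density of $R$ is the same trick the paper uses (just placed at the end rather than the beginning). The genuine issue is the inductive step for two rays $x,y$ in a common gap $(g_-,g_+)$ of $R_{n-1}(\gamma)$: you have correctly located the hard part, but your proposed resolution has a gap.

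You want to cut off the common first crossing through $I$ and view the residual rays as rays of $\gamma$-length $n-1$ inside the disk-minus-Cantor-set $D$ beyond $I$, then invoke ``analogues'' of the earlier lemmas in $D$. But the map $P:R\to S^1$ and the inductive hypothesis ``$P$ is order-preserving on $R_{n-1}(\gamma')$'' are statements about rays of $\Omega$ from $\infty$ and about equators $\gamma'$ of $\Omega$, tied to the $\Gamma$-equivariance of $P$ and the $\Gamma$-transitivity behind Lemma~\ref{lemma: p preserves order on disjoint triples}. There is no map ``$P$'' for rays of $D$: a ray of $D$ starting on $\partial D$ is not an element of $R$, the stabilizer constructions that defined $P$ don't transport, and $\MCG(D)\hookrightarrow\Gamma$ is nowhere near transitive on triples of disjoint short rays of $\Omega$. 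So ``the analogues follow from their ambient counterparts'' is exactly the unproved step, and it does not follow formally — you flagged this yourself, but the proposal as written does not close it.

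The paper's resolution stays entirely in $\Omega$: it forms the triangle $\Delta$ bounded by $I$ and the two straight rays $\alpha_\pm\in R_0(\gamma)$ to the endpoints of $I$, observes that no geodesic ray in the gap can enter $\Delta$ through $\alpha_-$ and exit through $\alpha_+$ (that would force a self-intersection with its own initial arc $\alpha_r$), and then isotopes $I$ across $\Delta$ and slightly past $\infty$ to produce a \emph{new equator $\gamma'$ of $\Omega$} such that every ray in $[g_-,g_+]$ of $\gamma$-length $\le n$ has $\gamma'$-length $\le n-1$. Since $\gamma'$ is a genuine equator of $\Omega$, the inductive hypothesis (which is quantified over all equators) applies verbatim to $R_{n-1}(\gamma')\supset R_n(\gamma)\cap[g_-,g_+]$, and no subsurface analogue is needed. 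This is precisely the fleshed-out version of your parenthetical alternative, ``iterating the argument with a strictly decreasing crossing budget''; the missing ingredient in your write-up is the observation that the crossing reduction can be realized as a change of equator in $\Omega$ itself rather than as a passage to a subsurface, and the geometric input (the triangle argument) that makes this equator modification legitimate.

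One more small point: your gaps-to-gaps clause in the inductive hypothesis is sound and in fact the correct way to make the paper's ``it suffices to check each complementary interval separately'' rigorous — the paper leaves this slightly implicit, and what you wrote fills it in. So keep that part; the fix needed is only at the equator-modification step.
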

\begin{proof}
	We first induct on $n\ge 0$ to prove that $P$ is order-preserving on $R_n(\gamma)$ for all $\gamma$ as
	above. The base case is covered by Corollary \ref{corr: p preserves order, simple version}. Suppose
	$n\ge0$ and $P$ is order-preserving on $R_n(\gamma)$ for all $\gamma$. For any $\gamma$, it suffices to 
	show that $P$ is order-preserving on $R_{n+1}(\gamma)\cap [r_-,r_+]$
	for each complementary interval $(r_-,r_+)$ of $R_n(\gamma)$. Let $I$ be the complementary interval 
	of the Cantor set in $\gamma$ provided by Lemma \ref{lemma: same first intersection}. Let $\alpha_-$
	and $\alpha_+$ be the geodesic short rays in $R_0(\gamma)$ going directly from infinity to the boundary 
	points of $I$. Consider the triangular region $\Delta$ bounded by sides $I$, $\alpha_-$ and $\alpha_+$. 
	For any $r\in R_{n+1}(\gamma)\cap [r_-,r_+]$ represented by a geodesic, let $\alpha_r$ be the subarc of 
	$r$ from infinity to its first intersection with $\gamma$. Then $r$ never enters $\Delta$ from 
	$\alpha_-$ and leaves from $\alpha_+$, or vice versa, since otherwise this would result in an 
	intersection with $\alpha_r$ and thus a self-intersection of $r$. Thus every component of $r\cap \Delta$
	has at least one end on $I$. Isotoping $I$ through $\Delta$ to $\alpha_-\cup \alpha_+$ and slightly 
	further across infinity to an arc $I'$, we obtain a new equator $\gamma'$ such that any ray 
	$r\in R_{n+1}(\gamma)\cap [r_-,r_+]$ has $\gamma'$-length at most $n$ by resolving the first 
	intersection with $\gamma$ without creating new intersections; see Figure \ref{fig: push}.
	Thus by the induction hypothesis, $P$ is order-preserving on this set of short rays, which completes our 
	induction.
	
	\begin{figure}
		\labellist
		\small 
		
		\pinlabel $\infty$ at 160 175
		\pinlabel $\alpha_+$ at 158 200
		\pinlabel $I'$ at 135 180
		\pinlabel $\alpha_r$ at 190 220
		\pinlabel $\alpha_-$ at 210 205
		\pinlabel $I$ at 230 310
		\pinlabel $\Delta$ at 200 280
		\pinlabel $\gamma$ at 8 165
		\pinlabel $r$ at 320 280
		\endlabellist
		\centering
		\includegraphics[scale=0.6]{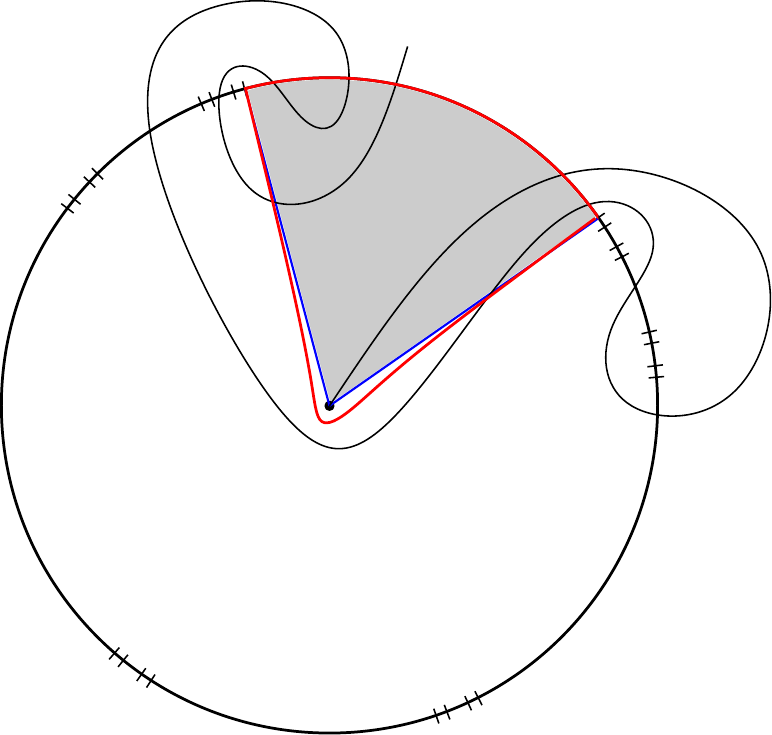}
		\caption{Pushing $I$ across the triangle $\Delta$ changes the equator $\gamma$ to $\gamma'$ reducing the length of $r$}\label{fig: push}
	\end{figure}
	
	Next, by Lemma \ref{lemma: good circle}, $P$ preserves the order of any triple of short rays with 
	distinct endpoints.
	
	Finally, consider an arbitrary positively oriented triple of rays $(r_1,r_2,r_3)$. 
	Evidently any ray $r\in R_0(\gamma)$ is the limit of disjoint rays in $R_1(\gamma)$ 
	with distinct endpoints on both sides.
	Thus by transitivity of the $\Gamma$ action on $R$, every ray $r\in R$ is the limit of disjoint rays with
	distinct endpoints on both sides in the simple circle. In particular, there are rays $r_{i+},r_{i-}$ 
	such that $(r_{1\pm},r_{2\pm},r_{3\pm})$ is a positively oriented triple of rays with distinct endpoints
	for any choice of the $\pm$ signs, and that $(r_{i-},r_i,r_{i+})$ is a positively oriented triple of 
	disjoint rays for any $i=1,2,3$. By what we have shown and Corollary 
	\ref{corr: p preserves order, simple version}, $(P(r_{1\pm}),P(r_{2\pm}),P(r_{3\pm}))$ is positively 
	oriented independent of the choice of signs and 
	$(P(r_{i-}),P(r_i),P(r_{i+}))$ is positively oriented for any $i=1,2,3$. This implies that 
	$(P(r_1),P(r_2),P(r_3))$ is positively oriented.
\end{proof}

Now we prove Theorem \ref{thm: rigidity}.
\begin{proof}[Proof of Theorem \ref{thm: rigidity}]
	Let $\hat{P}$ be the closure of	the image of $P$. Then $\hat{P}$ is $\Gamma$-invariant. 
	Collapse all complementary intervals of $\hat{P}$ to obtain another circle $S^1_P$ where $\Gamma$ acts. Then the quotient map $\pi: S^1\to S^1_P$ semi-conjugates this action to $\rho$. By Theorem \ref{theorem:conical_action}, there is always some ray in the interval $(r,s)\subset S^1_S$ for any two $r,s\in R$. Thus the image of $\pi\circ P$ is still injective and in	the correct circular order by Lemma \ref{lemma: p preserves order}. This correspondence of dense $\Gamma$-orbits extends to a homeomorphism $S^1_S\cong S^1_P$ that conjugates the two actions.
\end{proof}

\section{Acknowledgments}

We would like to thank Juliette Bavard, Ian Biringer, Kathryn Mann, Curt McMullen,
Christian Rosendal and Alden Walker for helpful comments. We also thank the anonymous 
referee for detailed suggestions improving the paper. 
Danny Calegari was partially supported by NSF grant 1405466.

\appendix

\section{Homology of big mapping class groups}

Let $\Gammahat$ denote the mapping class group of the sphere minus a Cantor set. It is closely related to $\Gamma$ by the following point-pushing Birman exact sequence
$$1\to \pi_1(\Omegahat)\to \Gamma\to \Gammahat\to 1$$
where $\Omegahat:= \Omega \cup \infty$ is the sphere minus a Cantor set.

It is shown in \cite{Calegari_blog} that $\Gammahat$ is uniformly perfect, which implies
\begin{lemma}\label{lemma: H_1}
	$H_1(\Gammahat;\Z)=0$.
\end{lemma}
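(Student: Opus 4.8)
The plan is to deduce this immediately from the uniform perfectness of $\widehat{\Gamma}$ established in \cite{Calegari_blog}. Recall that for any group $G$ there is a canonical isomorphism $H_1(G;\Z)\cong G^{\mathrm{ab}}=G/[G,G]$, the abelianization. So the statement is equivalent to the assertion that $\widehat{\Gamma}$ is perfect, i.e.\ $\widehat{\Gamma}=[\widehat{\Gamma},\widehat{\Gamma}]$.

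First I would invoke \cite{Calegari_blog}: every element of $\widehat{\Gamma}$ is a product of a uniformly bounded number of commutators. In particular every element is \emph{some} product of commutators, so $\widehat{\Gamma}\subseteq[\widehat{\Gamma},\widehat{\Gamma}]$; the reverse inclusion is automatic, hence $\widehat{\Gamma}$ is perfect and $H_1(\widehat{\Gamma};\Z)=\widehat{\Gamma}/[\widehat{\Gamma},\widehat{\Gamma}]=0$. There is essentially no obstacle here: the only input is the uniform perfectness result, which we are citing, and the translation from that to the vanishing of $H_1$ is the standard identification of first integral homology with the abelianization. (Note that the weaker statement of plain perfectness already suffices for this lemma; uniform perfectness is recorded because it is what feeds into the quasimorphism/Euler class arguments of Proposition~\ref{Euler_prop} and the appendix's later computation of $H_2$.)
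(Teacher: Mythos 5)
Your argument is exactly the paper's: it cites the uniform perfectness of $\Gammahat$ from \cite{Calegari_blog} and uses the standard identification $H_1(\Gammahat;\Z)\cong\Gammahat/[\Gammahat,\Gammahat]$ to conclude the abelianization vanishes. Your parenthetical observation that plain perfectness already suffices here, with uniform perfectness being what's really needed later, is a correct and reasonable clarification.
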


The goal is to compute $H_2(\Gammahat)$ and $H^2(\Gammahat)$. All (co)homology groups in this appendix are singular (co)homology with $\Z$ coefficients.
The main theorem we prove is the following:
\begin{thm}[Homology]\label{thm: H_2}
	$H_2(\Gammahat)=\Z/2$ and $H^2(\Gammahat)=0$.
\end{thm}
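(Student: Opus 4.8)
\textbf{Proof proposal for Theorem~\ref{thm: H_2}.}

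The plan is to compute $H_*(\Gammahat)$ in low degrees by relating $\Gammahat$ to the family of mapping class groups of the sphere minus finitely many points, using the fact that $\Omegahat$ is an increasing union of such finite-type pieces, and then extracting the $\Z/2$ from the finite-type mapping class groups of the sphere. Concretely, I would exhaust the Cantor set by an increasing sequence of finite subsets $F_1 \subset F_2 \subset \cdots$ whose union is dense, and choose the $F_i$ compatibly with a pair-of-pants–type decomposition so that the corresponding subsurfaces $\Sigma_i = S^2 \setminus (\text{nbhd of } F_i)$ form an exhaustion of $\Omegahat$. The mapping class group $\MCG(\Sigma_i)$ (fixing the boundary circles, which bound the removed disks containing the "future" Cantor subsets) maps to $\Gammahat$, and a colimit/continuity argument — together with the observation that any singular cycle involves only compactly supported data, hence is carried by some $\Sigma_i$ — should identify $H_*(\Gammahat)$ with a suitable colimit of the $H_*$ of these finite-type groups, provided the stabilization maps are understood. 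The first technical step is therefore to set up this exhaustion carefully and verify that the induced maps on $H_2$ stabilize; I would use that $\MCG(S^2, n \text{ punctures})$ has well-understood $H_2$ (it is essentially $\Z/2$ coming from the hyperelliptic/chain relation, detected by the Meyer signature cocycle or equivalently the fact that $\MCG(S^2,n)$ surjects onto a symmetric group and the relevant extension class is $2$-torsion), and that the maps induced by adding punctures and by the "forgetful/capping" inclusions respect this $\Z/2$.

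The second step is to pin down that the colimit stays exactly $\Z/2$ rather than dying in the limit: the danger is that the generator of $H_2(\MCG(\Sigma_i))$ becomes a boundary after enlarging to $\Sigma_{i+1}$. Here I would exhibit an explicit surface subgroup or genus-bearing subsurface realizing a nonzero class that is manifestly preserved — e.g. the class dual to a chain relation supported in a fixed small subsurface, which is never killed because capping a boundary component by a disk-with-Cantor-set is, on homology, compatible with the finite-type capping maps under which this class persists. Equivalently, one can detect the generator by a homomorphism: $\Gammahat$ acts on the homology of some canonical double cover (the orientation double cover branched over... — more precisely, the mod-$2$ "parity of permutation of the Cantor set" gives $H_1(\Gammahat;\Z/2)$ data, while the $\Z/2$ in $H_2$ is detected by the associated $\mathbb{Z}/2$-central extension $\Gammahat_D \to \Gammahat$ from $\MCG$ of the disk minus the Cantor set, cf.\ the central $\Z$-extension used in the proof of Theorem~\ref{thm: circle embeds}). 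I would argue that the universal central extension data of the finite-type groups assemble to show $H_2(\Gammahat) = \Z/2$ with generator the image of the chain-relation class.

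The cohomological statement $H^2(\Gammahat) = 0$ then follows from the homological one by the universal coefficient theorem: $H^2(\Gammahat;\Z) \cong \Hom(H_2(\Gammahat;\Z),\Z) \oplus \mathrm{Ext}^1(H_1(\Gammahat;\Z),\Z)$, and since $H_1(\Gammahat;\Z)=0$ by Lemma~\ref{lemma: H_1} and $H_2(\Gammahat;\Z)=\Z/2$ is finite (so $\Hom(\Z/2,\Z)=0$), both summands vanish. The main obstacle I anticipate is the colimit/stabilization step: unlike finite-type mapping class groups, there is no homological stability theorem available off the shelf for this big mapping class group, so I would need to either invoke/adapt the relevant result for $\MCG(S^2,n)$ in a range (degree $2$, where the computation is classical) and carefully control the direct-limit behavior, or else run a direct Mayer–Vietoris / bar-complex argument in the spirit of the acyclicity argument of Lemma~\ref{lemma: Gamma_(r) uniformly perfect and acyclic} to handle the "tail" directly — showing that the subgroups supported near infinitely many shrinking Cantor subsets contribute nothing new to $H_2$, so that all of $H_2(\Gammahat)$ comes from a single finite-type piece.
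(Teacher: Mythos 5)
Your proposal has a genuine gap at the very first step. You claim that "any singular cycle involves only compactly supported data, hence is carried by some $\Sigma_i$," and you propose to identify $H_*(\Gammahat)$ with a colimit of $H_*(\MCG(\Sigma_i))$ over an exhaustion by finite-type subsurfaces. This is false: a cycle in the bar complex of $\Gammahat$ involves finitely many \emph{group elements}, but those elements are arbitrary mapping classes of $\Omegahat$ and are in general \emph{not} supported in any finite-type subsurface (indeed $\Gammahat$ is uncountable, whereas a countable colimit of the countable groups $\MCG(\Sigma_i)$ would be countable). There is no natural surjection, let alone isomorphism, from such a colimit onto $\Gammahat$, so the direct-limit argument does not get off the ground. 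You do flag this as "the main obstacle," but the alternative you sketch — "run a direct Mayer--Vietoris / bar-complex argument" — is exactly where the real content lies and is left unaddressed.

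The paper's route replaces the finite-type subgroups $\MCG(\Sigma_i)$ by the \emph{big} subgroups $\Gammahat_P$ of mapping classes that are the identity near a finite set $P$ of Cantor points. These are not finite-type groups, and that is essential: the fragmentation lemma (in the spirit of Segal and Sergiescu--Tsuboi) shows that the union $\cup_P B_P$ inside $B\Gammahat$ is a homotopy equivalence, because any finite collection of simplices can be coned off after absorbing them simultaneously near finitely many Cantor points. This is what makes Mayer--Vietoris applicable. The paper then needs a separate factorization lemma to relate $H_1(\Gammahat_P)$ to $H_1(\MCG(S^2-|P|D^2))$, and a delicate inductive cokernel/stabilization argument to narrow $H_2$ down to $0$ or $\Z/2$. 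Finally, the paper pins down the answer as $\Z/2$ not via the chain relation in $\MCG(S^2,n)$, but by mapping $\Gammahat \cong \Homeorm^+(S^2,K)$ (via Yagasaki's contractibility of the identity component) into $\Homeorm^+(S^2)\simeq \mathrm{SO}(3)$ and detecting $H_2(B\mathrm{SO}(3))\cong\Z/2$ on a Klein four-subgroup. Your chain-relation detection idea would need its own verification that the relevant class survives the (unavailable) stabilization. The UCT step at the end is fine, but everything before it needs to be rebuilt along the fragmentation lines rather than an exhaustion colimit.
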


The strategy is to break the classifying space $B\Gammahat$ into simpler ones by a fragmentation argument 
which dates back to Segal \cite{Segal}. See also \cite{Sergiescu_Tsuboi}. Here we realize $B\Gammahat$ as 
$G\backslash|E\Gammahat|$, where $E\Gammahat$ is the simplicial complex whose $k$-simplices are $(k+1)$-tuples
$(g_0,\ldots, g_k)\in \Gammahat^{k+1}$, and $|E\Gammahat|$ is the underlying topological space. For any 
finite subset $P$ of the Cantor set, let $\Gammahat_P$ be the subgroup of elements represented by 
homeomorphisms that are the identity in some neighborhood of $P$. Then $E\Gammahat_P$ naturally sits inside
$E\Gammahat$ as a subcomplex in a $\Gammahat_P$-equivariant way. 
Note that $E_P\defeq \Gammahat\cdot(E\Gammahat_P)$ is the subcomplex of 
$E\Gammahat$ consisting of simplices whose components $g_0,\ldots, g_k$ coincide in some neighborhood of $P$. 
Then $E_P$ is $\Gammahat$-invariant and $B\Gammahat_P=\Gammahat_P\backslash |E\Gammahat_P|=\Gammahat\backslash |E_P|\subset B\Gammahat$. 
In the sequel, we always use this preferred realization of $B\Gammahat_P$ and denote it simply by $B_P$. 

Let $B_*\defeq \cup_P B_P$, where $P$ runs over all finite subsets of the Cantor set. Note that $B_P \cap B_Q=B_{P\cup Q}$ and $B_*=\cup_p B_{\{p\}}$ where $p$ runs over the Cantor set.

\begin{lemma}\label{lemma: fragmentation}
	The inclusion $B_*\to B\Gammahat$ is a homotopy equivalence, and thus $H_k(B\Gammahat)\cong H_k(B_*)$ for any $k\ge 0$.
\end{lemma}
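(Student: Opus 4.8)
The plan is to show $B_* \to B\Gammahat$ is a weak equivalence by a direct homotopy argument on the level of the simplicial models, using the fact that $\Gammahat$ is generated by ``locally supported'' mapping classes and, more precisely, that any finite simplex of $E\Gammahat$ can be pushed into $E_{\{p\}}$ for a suitable point $p$ of the Cantor set. Concretely, I would show that the map is a deformation retract after restricting attention to any compact subcomplex, which suffices since both $|E\Gammahat|$ and $|B_*|$ are CW complexes and any map from a sphere (or disk) lands in a compact, hence finite, subcomplex. Since $B_* = \bigcup_p B_{\{p\}}$ is an increasing union of subspaces of $B\Gammahat$ directed by inclusion under $P \mapsto P \cup Q$ (using $B_P \cap B_Q = B_{P\cup Q}$, which makes the poset of finite subsets into a directed system with $B_{P\cup Q}\subseteq B_P$), the key point is that every finite subcomplex of $E\Gammahat$ is contained, up to $\Gammahat$-equivariant homotopy, in some $E_P$.

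The heart of the argument is the following claim: given finitely many elements $g_0,\ldots,g_k\in\Gammahat$, there is a finite subset $P$ of the Cantor set and a ``fragmentation'' homotopy moving the simplex $(g_0,\ldots,g_k)$ into $E_P$ — meaning each $g_i$ is homotoped (through a path in $\Gammahat$, i.e.\ an edge path in $E\Gammahat$) to an element $g_i'$ so that all the $g_i'$ agree near $P$. In fact one can do better: I would show each $g_i$ can be written as a product $g_i = a_i b_i$ where $b_i$ is supported in a fixed small neighborhood of $P$ and $a_i \in \Gammahat_P$, by choosing $P$ large enough that the ``interesting'' part of each $g_i$ — where it fails to be locally trivial — is captured by finitely many points. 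The existence of such a decomposition is essentially the statement that $\Gammahat$ is generated (in a uniform, bounded-at-a-point sense) by its subgroups $\Gammahat_P$ together with homeomorphisms supported near $P$; this is the same mechanism used in Lemma~\ref{lemma: gen Gamma_r from Gamma_(r)}, and it relies on the compactness of the Cantor set and the fact that a mapping class of $\Omegahat$ is determined by a homeomorphism that is the identity outside a region that may be taken to have compact closure away from finitely many Cantor points after isotopy. I would then assemble these local decompositions into an explicit contraction of $|E\Gammahat|$ onto $|B_*|$ fibered over $\Gammahat\backslash(-)$; the passage from the claim about a single simplex to a homotopy of the whole inclusion is a standard colimit/telescope argument, using that every compact set meets only finitely many simplices and that the directed system $\{E_P\}$ is cofinal in this ``finite-support'' sense.

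The main obstacle I anticipate is making the fragmentation homotopy \emph{natural} and \emph{$\Gammahat$-equivariant} enough that it descends to a genuine homotopy of maps of spaces $|B_*| \hookrightarrow B\Gammahat$ rather than just an abstract isomorphism on homotopy groups — i.e.\ controlling the choices of $P$ and of the decompositions $g_i = a_ib_i$ coherently across all faces of a simplex and across the whole complex simultaneously, so that the pieces glue. This is exactly the technical content of Segal's and Sergiescu--Tsuboi's fragmentation arguments cited in the text, and I would follow their template: one does not literally build one global homotopy, but instead shows $\pi_j(B\Gammahat, B_*) = 0$ for all $j$ by taking a map of a pair $(D^j, S^{j-1}) \to (|E\Gammahat|, |B_*|)$ (equivariantly, or after passing to the quotient), noting its image is a finite subcomplex, finding $P$ working for all the finitely many simplices involved, and using the contractibility of $|E\Gammahat_P|$ inside $|E\Gammahat|$ relative to that data to push the disk into $|E_P| \subseteq |B_*|$. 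Once the weak equivalence is established, both spaces being CW gives genuine homotopy equivalence by Whitehead, and the homology statement $H_k(B\Gammahat) \cong H_k(B_*)$ is immediate.
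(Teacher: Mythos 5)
Your high‑level framework is right — reduce to showing the equivariant subcomplex $E_* = \bigcup_P E_P$ is weakly contractible (equivalently the relative homotopy groups vanish), then use compactness so that it suffices to treat finitely many simplices at a time, and quote the directedness of the system $\{E_P\}$. This is indeed the Segal / Sergiescu--Tsuboi template the paper follows. But the proposal stops exactly where the actual argument begins: you repeatedly say you will ``push'' a finite subcomplex into $E_P$, or homotope $(g_0,\dots,g_k)$ to $(g_0',\dots,g_k')$ agreeing near $P$, and you flag coherence as ``the main obstacle'' and defer it to the references. That deferred step is the entire content of the lemma.

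Two more specific problems. First, the product decomposition $g_i = a_i b_i$ with $a_i\in\Gammahat_P$ and $b_i$ supported near $P$ exists, but there is no obvious way it produces an edge‑path or a homotopy in $|E\Gammahat|$ from the simplex $(g_0,\dots,g_k)$ to something in $E_P$; simplices of $E\Gammahat$ are tuples of group elements, and ``homotoping through a path in $\Gammahat$'' is not an available operation. Second, an arbitrary simplex of $E\Gammahat$ (as opposed to one already in some $E_{P_i}$) need not lie in any $E_P$, nor be pushable there rel an arbitrary configuration of faces; a pair $g_0,g_1$ disagreeing near every Cantor point is an edge in $E\Gammahat$ lying in no $E_P$. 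So the right object to manipulate is a finite collection of simplices $\sigma_i$ that already lie in various $E_{P_i}$ (say, forming a sphere in $|E_*|$), and the goal is to produce a bounding disk inside $|E_*|$.

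What actually does the work in the paper is a cone construction with a single, explicitly built apex. Given $\sigma_i\in E_{P_i}$, first perturb the $P_i$ so that the $P_i$ are pairwise disjoint and, for each fixed $j$, the images $g_j^{(i)}(P_i)$ are pairwise disjoint (possible because $\sigma_i\in E_{P_i'}$ for any $P_i'$ close to $P_i$). Choose small neighborhoods $V_i$ of $P_i$, disjoint and meeting the Cantor set in clopen sets, on which all components of $\sigma_i$ restrict to a common $h_i$, with $h_i(V_i)$ pairwise disjoint. There is then a single $h\in\Gammahat$ with $h|_{V_i}=h_i$ for all $i$, and the cone $\bar\sigma_i=(h,g_0^{(i)},\dots,g_k^{(i)})$ lies in $E_{P_i}\subset E_*$; these cones glue to a disk with boundary $\sum_i\sigma_i$. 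This simultaneous coning — one $h$ working for all $\sigma_i$ at once — is the coherence mechanism you were reaching for; it replaces, rather than refines, the $g_i=a_ib_i$ factorization idea. Without it, the proposal is a correct outline but not a proof.
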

\begin{proof}
	This follows the argument in \cite[Proposition 4.1]{Sergiescu_Tsuboi}. Let $E_*\defeq\cup_P E_P$, 
	where $P$ runs over all finite subsets of the Cantor set. It suffices to show $|E_*|$ is contractible 
	since it has a free $\Gammahat$ action with quotient $B_*$. We will actually show that any cycle consisting 
	of simplices $\sigma_i=(g_0^{(i)},\ldots,g_k^{(i)})\subset E_{P_i}$ can be coned off in $E_*$.
	Note by definition that the components $g_j^{(i)}$ of $\sigma_i$ agree in a neighborhood of $P_i$, and thus
	$\sigma_i\subset E_{P'_i}$ for any set $P'_i$ sufficiently close to $P_i$.
	Up to replacing $P_i$ by nearby sets, we can assume that all $P_i$ are disjoint and that 
	all $g_j^{(i)}(P_i)$ are also disjoint for any fixed $j$
	(note that $g_j^{(i)}(P_i)$ does not depend on $j$).
	
	Then there are small neighborhoods $V_i$ of $P_i$ such that $V_i\cap V_j=\emptyset$ for all $i\neq j$ and the
	components $g_j^{(i)}$ of $\sigma_i$ restrict to the same homeomorphism $h_i$ on $V_i$ with 
	$h_i(V_i)\cap h_j(V_j)=\emptyset$. We may choose each $V_i$ to be a disjoint union of open disks whose 
	intersections with the Cantor set are clopen subsets of the Cantor set. Then there is a homeomorphism 
	$h:S^2\to S^2$ preserving the Cantor set such that $h|_{V_i}=h_i$. Now $\bar{\sigma_i}\defeq (h,g_0^{(i)},\ldots,g_k^{(i)})$
	(i.e. the cone over $\sigma_i$ with vertex $h$)	is in $E_{P_i}$, which cones off the cycle in $E_*$ as desired,
	that is, $\partial \sum_i \bar{\sigma_i}=\sum_i \sigma_i$.
\end{proof}

To compute the homology of $B_*$, we need to understand the homology of the subspaces $B_P$ and their unions.
\begin{lemma}\label{lemma: suspension}
	$B_{\{p\}}$ is acyclic for any $p$, that is, $\widetilde{H}_k(B_{\{p\}})=0$ for all $k$.
\end{lemma}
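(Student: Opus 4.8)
The plan is to show that $B_{\{p\}} = \Gammahat \backslash |E_{\{p\}}|$ is homotopy equivalent to the classifying space $B\Gammahat_{\{p\}}$ of the subgroup $\Gammahat_{\{p\}}$ of mapping classes supported away from a neighborhood of $p$, and then to run a Mather-style suspension argument to conclude that $\Gammahat_{\{p\}}$ is acyclic; in fact this is the global analogue of Lemma~\ref{lemma: Gamma_(r) uniformly perfect and acyclic}. By the preferred realization described just above the statement, $B_{\{p\}} = B\Gammahat_{\{p\}}$ on the nose, so the content is purely the acyclicity of the group $\Gammahat_{\{p\}}$.

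First I would fix a neighborhood basis of $p$: a nested sequence of proper dividing disks $D_0 \supset D_1 \supset \cdots$ shrinking to $p$, so that any given $g \in \Gammahat_{\{p\}}$ is the identity on some $D_N$, hence supported in $S^2 \setminus D_N$, which after discarding the disk around $p$ we may view as supported in an annular-type region bounded away from $p$. Next I would choose an element $h \in \Gammahat_{\{p\}}$ (a ``shift'' towards $p$) such that the disks $E_n \defeq h^n(E_0)$ are pairwise disjoint and converge to $p$, where $E_0$ is a disk containing the support of the finitely many group elements appearing in a given cycle. Then for such a $g$, the conjugates $g_n = h^n g h^{-n}$ have pairwise-disjoint supports, so the infinite product $a(g) \defeq \prod_{n \ge 0} g_n$ is a well-defined element of $\Gammahat_{\{p\}}$, and $g = a(g)\, h\, a(g)^{-1}\, h^{-1}$ exhibits every such $g$ as a commutator. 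For acyclicity: given a homology class $[\sigma] \in H_k(\Gammahat_{\{p\}};\Z)$, represent it by a finite cycle $\sigma$ in the bar complex, choose $E_0$ and $h$ adapted to the finitely many elements involved, replace each group element $g$ occurring in $\sigma$ by $a(g)$ to get a new cycle $a(\sigma)$, and verify the chain-level identity $[\sigma] = [a(\sigma)] - h_*[a(\sigma)]h_*^{-1}$; since conjugation by $h$ induces the identity on group homology, the right side is $[a(\sigma)] - [a(\sigma)] = 0$, so $[\sigma] = 0$. This is exactly the bookkeeping already carried out in the proof of Lemma~\ref{lemma: Gamma_(r) uniformly perfect and acyclic}, transported from $\Gamma_{(r)}$ to $\Gammahat_{\{p\}}$.

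The main obstacle is purely topological rather than homological: I must make sure that ``supported away from a neighborhood of $p$'' plays as nicely on the sphere as ``supported away from $\rayend(r)$'' did on the plane --- in particular, that a suitable shift $h$ genuinely exists inside $\Gammahat_{\{p\}}$ (it cannot merely be a conformal dilation, since $\Gammahat_{\{p\}}$ must preserve the Cantor set and fix a neighborhood of $p$), and that the disks $D_n$ can be chosen so their intersections with the Cantor set are clopen, so that $h$ can be built to preserve the Cantor set. This is handled by the same kind of explicit surgery used throughout the paper (e.g. in Figure~\ref{fig: suspension}): one picks $h$ to be the identity near $p$, supported in $D_0$, pushing the clopen piece $D_1 \cap \Cantor$ onto $D_2 \cap \Cantor$, and so on. Once this configuration is in place, the infinite-product and suspension steps go through verbatim, and the lemma follows.
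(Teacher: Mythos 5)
Your approach is correct and is exactly what the paper intends: the paper omits the proof, deferring to Mather's suspension argument as carried out in Lemma~\ref{lemma: Gamma_(r) uniformly perfect and acyclic}, and your write-up reproduces that argument for $\Gammahat_{\{p\}} = \Gamma\text{-stabilizer of a germ at }p$ with $B_{\{p\}} = B\Gammahat_{\{p\}}$ on the nose, as you correctly observe.

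There is one slip in the construction of $h$ that, as literally written, would fail. You say the disks $E_n \defeq h^n(E_0)$ should ``converge to $p$'' and call $h$ a ``shift towards $p$''. This is impossible: $h \in \Gammahat_{\{p\}}$ is by definition the identity on some neighborhood $U$ of $p$, and $E_0$ is disjoint from $U$ (since it contains the supports of finitely many elements of $\Gammahat_{\{p\}}$, all disjoint from a neighborhood of $p$); hence every $h^n(E_0)$ stays outside $U$, so the $E_n$ cannot accumulate at $p$. The correct setup --- the one depicted in Figure~\ref{fig: suspension} for the planar case, where the disks converge to a Cantor point $p$ which is \emph{not} $\rayend(r)$ --- is to pick $h$ pushing the disks towards some \emph{other} point $q \ne p$ of the Cantor set, while $h$ remains the identity near $p$. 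The Mather argument is insensitive to which Cantor point the disks converge to, requiring only pairwise disjointness and accumulation away from the data already in play, so with this correction your infinite-product and chain-level computations ($g = [a(g),h]$ and $[\sigma] = [a(\sigma)] - h_*[a(\sigma)]h_*^{-1} = 0$) go through verbatim.
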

\begin{proof}
	We omit the proof since it is almost the same as the proof of Lemma \ref{lemma: Gamma_(r) uniformly perfect and acyclic} using Mather's suspension argument.
\end{proof}

\begin{lemma}\label{lemma: H_1 union of B_p}
	We have $H_1(\cup_{p\in P} B_{\{p\}})=0$ for any nonempty finite subset $P$ of the Cantor set.
\end{lemma}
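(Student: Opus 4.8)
The plan is to prove that $H_1(\cup_{p\in P} B_{\{p\}})=0$ by induction on $|P|$, using the Mayer--Vietoris sequence together with the acyclicity result of Lemma \ref{lemma: suspension}. The base case $|P|=1$ is exactly Lemma \ref{lemma: suspension}. For the inductive step, write $P=Q\sqcup\{p\}$ with $|Q|\ge 1$, set $U=\cup_{q\in Q} B_{\{q\}}$ and $V=B_{\{p\}}$, so that $U\cup V=\cup_{p'\in P}B_{\{p'\}}$ and $U\cap V=\cup_{q\in Q}(B_{\{q\}}\cap B_{\{p\}})=\cup_{q\in Q} B_{\{q,p\}}$. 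The Mayer--Vietoris sequence then reads
\begin{equation*}
H_1(U)\oplus H_1(V)\to H_1(U\cup V)\to H_0(U\cap V)\to H_0(U)\oplus H_0(V).
\end{equation*}
By Lemma \ref{lemma: suspension} we have $H_1(V)=0$, and by the inductive hypothesis $H_1(U)=0$; all the spaces in sight are path-connected (each $B_{\{p'\}}$ contains the basepoint coming from the identity coset, and so does each $B_{\{q,p\}}$), so $H_0(U\cap V)\to H_0(U)\oplus H_0(V)$ is injective. Hence $H_1(U\cup V)=0$.

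The one point that needs care, and which I expect to be the main (though modest) obstacle, is verifying that $U\cap V$ has the form asserted and is path-connected: one must know that $B_Q\cap B_{P'} = B_{Q\cup P'}$ for the chosen realizations, which is the identity $B_P\cap B_Q=B_{P\cup Q}$ recorded just before Lemma \ref{lemma: fragmentation}, applied with the distributivity $(\cup_{q\in Q} B_{\{q\}})\cap B_{\{p\}}=\cup_{q\in Q}(B_{\{q\}}\cap B_{\{p\}})=\cup_{q\in Q} B_{\{q,p\}}$. Path-connectedness of each $B_{\{q,p\}}$ follows because $\Gammahat_{\{q,p\}}$ is a group (its classifying space is connected), and the union of connected subcomplexes sharing the common basepoint is connected; so $H_0$ of all the relevant spaces is $\Z$ with the maps induced by inclusion being the identity, giving the needed injectivity. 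One should also note that the Mayer--Vietoris sequence applies since $U$ and $V$ are subcomplexes of the CW complex $B_*$, hence the pair is excisive.

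Thus the entire argument is: establish the set-theoretic identities for the intersections, observe connectedness, invoke Lemma \ref{lemma: suspension} and the inductive hypothesis, and read off $H_1=0$ from Mayer--Vietoris. No delicate point-set topology or mapping-class-group input is needed beyond what is already in place; the combinatorial identity $B_P\cap B_Q=B_{P\cup Q}$ does all the work.
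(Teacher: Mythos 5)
Your proof is correct and follows essentially the same route as the paper's: induction on $|P|$, base case from Lemma~\ref{lemma: suspension}, inductive step via Mayer--Vietoris for the cover $U=\cup_{q\in Q}B_{\{q\}}$, $V=B_{\{p\}}$. The only difference is that you spell out explicitly why the map $H_1(U)\oplus H_1(V)\to H_1(U\cup V)$ is surjective (connectedness of $U\cap V$ making $H_0(U\cap V)\to H_0(U)\oplus H_0(V)$ injective), whereas the paper simply asserts the surjection; your added detail is correct and welcome.
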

\begin{proof}
	We induct on $|P|$. The base case $|P|=1$ is proved by Lemma \ref{lemma: suspension}. Suppose the conclusion holds for $P'$, and we consider $P=P'\cup \{q\}$ for some $q\notin P'$. Applying Mayer--Vietoris to the pair $(\cup_{p\in P'} B_{\{p\}},B_{\{q\}})$, we have a surjection $H_1(\cup_{p\in P'} B_{\{p\}})\oplus H_1(B_{\{q\}})\to H_1(\cup_{p\in P} B_{\{p\}})$. The domain is trivial by the induction hypothesis, so the conclusion follows.
\end{proof}

For each $g\in \Gammahat_P$, there are $|P|$ disjoint disks each containing one element of $P$ such that $g$ 
is the identity in these disks. Thus $g$ projects to a mapping class in $\MCG(S^2-|P|D^2)$. The disks depend
on $g$ but the groups $\MCG(S^2-|P|D^2)$ are canonically isomorphic by restricting homeomorphisms.
Thus this gives rise 
to a well-defined homomorphism $\pi: \Gammahat_P\to \MCG(S^2-|P|D^2)$. Denote the kernel by $\Gammahat_P^0$.

We say a disk in $S^2$ is a \emph{dividing disk} if both its interior and exterior intersect the Cantor set while its boundary does not.

\begin{lemma}\label{lemma: factorize}
	Any $g \in \Gammahat_P^0$ can be factored in $\Gammahat_P^0$ as a product of finitely many elements each supported in some dividing disk.
\end{lemma}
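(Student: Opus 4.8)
The plan is to prove Lemma~\ref{lemma: factorize} by an explicit geometric decomposition, peeling off one point of $P$ at a time and using the structure of the mapping class group of a sphere minus a Cantor set with finitely many marked disks. Since $g \in \Gammahat_P^0$, by definition $g$ is the identity in disjoint disks $\Delta_1,\ldots,\Delta_{|P|}$ around the points of $P$, and moreover $g$ projects to the identity in $\MCG(S^2 - |P|D^2)$. This means that, after an isotopy fixing the $\Delta_i$, the complement $S^2 - \cup_i \Delta_i$ (a sphere with $|P|$ boundary circles, minus a Cantor set) carries $g$ as a mapping class that is trivial when one forgets the Cantor set. Our goal is to write such a $g$ as a product of maps each supported in a dividing disk.

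First I would handle the simplest configuration: if the support of $g$ (away from the $\Delta_i$) can be enclosed in a single dividing disk $D$ — i.e.\ there is a disk $D$ avoiding $P$, meeting the Cantor set on both sides, with $g$ supported in $D$ — then there is nothing to prove. In general, one reduces to this case by cutting along a finite collection of arcs and circles. The key move: choose an embedded circle $\gamma$ in $S^2$, disjoint from the Cantor set and from $P$, separating $S^2$ into two disks $D', D''$ each meeting the Cantor set, and such that $\gamma$ is ``compatible'' with $g$ in the sense that $g(\gamma)$ is isotopic to $\gamma$ rel the Cantor set. Because $g$ forgets to the trivial mapping class of the finitely-marked sphere, such separating circles are plentiful. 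Then $g = g' \cdot g''$ (up to a correction) with $g'$ supported in $D'$ (a dividing disk) and $g''$ supported in $D''$ (also a dividing disk after possibly re-cutting); one iterates, localizing $g$ into smaller and smaller dividing disks, using compactness of the Cantor set and the fact that any mapping class is supported in a subsurface of finite type to guarantee the process terminates after finitely many steps.

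More carefully, the inductive scheme I would run is on $|P|$: for $|P| = 0$ the group $\Gammahat_P^0$ is all of $\Gammahat$ and the statement is a known fragmentation (every mapping class of the sphere minus a Cantor set is a finite product of mapping classes supported in dividing disks — this is essentially the classical fragmentation going back to the cited work of Segal and Sergiescu--Tsuboi, and is implicit in the generation results already used in \S~6). For the inductive step, given $P = P' \cup \{q\}$ and $g \in \Gammahat_P^0$, one first isotopes $g$ to be the identity on a slightly larger disk $\Delta$ around $q$; then enlarges $\Delta$ to a dividing disk $D_q$ (pushing its boundary out until it meets the Cantor set on both sides) and writes $g = g_1 g_2$ where $g_2$ is supported in a neighborhood of $D_q$ — hence in a dividing disk — and $g_1$ is the identity near $q$ \emph{and} on a neighborhood of $D_q$, so that $g_1 \in \Gammahat_{P'}^0$ after the obvious identification; the inductive hypothesis finishes $g_1$. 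The only subtlety is verifying that the splitting $g = g_1 g_2$ can be arranged with $g_1$ genuinely in $\Gammahat^0_{P'}$, i.e.\ still projecting trivially to the relevant finitely-marked mapping class group; this follows because the disk $D_q$ can be chosen so that its class is $g$-invariant, which in turn uses that $g$ is trivial after forgetting the Cantor set.

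The main obstacle, and where the argument needs genuine care rather than hand-waving, is the finiteness of the fragmentation — ensuring the recursive localization of $g$ into ever smaller dividing disks halts. The resolution is the standard one: any element of $\Gammahat$ (or $\Gammahat_P$) is represented by a homeomorphism supported on a compact subsurface $\Sigma$ of finite type, since a Cantor-set surface is exhausted by finite-type subsurfaces and a mapping class only ``sees'' finitely much; once $g$ is supported on such a $\Sigma$, a finite collection of dividing disks covers its support in a pattern with no infinite nesting, so the splitting terminates. I would therefore state at the outset that we may assume $g$ is supported on a finite-type subsurface, reducing the whole lemma to the finite-type fragmentation statement, which is classical. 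The write-up then amounts to: (i) reduce to finite-type support; (ii) invoke or reprove finite-type fragmentation into dividing disks; (iii) check the pieces can be taken in $\Gammahat_P^0$ by choosing $g$-invariant dividing circles. Steps (i) and (ii) are routine; step (iii) is the one place where the hypothesis $g \in \Gammahat^0_P$ (as opposed to merely $g \in \Gammahat_P$) is essential and must be used explicitly.
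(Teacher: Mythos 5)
Your proposal has two genuine gaps, both at places you flag as subtleties but do not actually resolve. The more fundamental one is the claim that ``any element of $\Gammahat$ (or $\Gammahat_P$) is represented by a homeomorphism supported on a compact subsurface $\Sigma$ of finite type.'' This is false for big mapping class groups: a homeomorphism supported on a finite-type subsurface of $S^2-K$ would fix a neighborhood of all but finitely many points of the Cantor set pointwise, but generic elements of $\Gammahat$ (and of $\Gammahat_P^0$) shuffle the Cantor set in ways that cannot be localized. Indeed the whole reason ``dividing disk'' appears in the lemma --- a disk meeting the Cantor set on both sides --- rather than ``compact finite-type subsurface'' is precisely that the supports involved are not finite type. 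So steps (i) and (ii) of your outline collapse.

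The second gap is your assertion that $g$-invariant dividing circles are ``plentiful'' because $g$ is trivial in $\MCG(S^2-|P|D^2)$. Triviality after forgetting the Cantor set gives you that $g\gamma$ and $\gamma$ (or $g\alpha$ and $\alpha$, for an arc) are isotopic \emph{rel $P$}, but they can intersect essentially and be non-isotopic \emph{rel $K$}. This is not a technicality; it is the entire content of the inductive step in the paper's proof. There, one fixes an arc $\alpha$ joining two boundary components, observes that $g\alpha$ and $\alpha$ are homotopic in $S$ but may intersect or be non-isotopic in $S-K$, and then explicitly constructs ``squeezing'' homeomorphisms $h, h'\in\Gammahat_P^0$ (each preserving an arc) that remove bigons or push Cantor dust out of the region between $\alpha$ and $g\alpha$, ultimately factoring $g$ into elements each preserving some arc. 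Your alternative of choosing $D_q$ inside the region where $g$ is already the identity does satisfy $g$-invariance, but then $g_2$ is trivial and $g_1 = g$: nothing has been factored, and you are pushed down to the hard base case $|P|=0$, for which there is no short argument. The paper's induction runs the other direction, with the trivial base case $|P|=1$ (where $g$ is automatically supported in a single dividing disk), and all the work lives in the inductive step.
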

\begin{proof}
	We induct on $|P|$. When $|P|=1$, every $g \in \Gammahat_P^0$ is itself supported in a dividing disk. Suppose $|P|=n\ge 2$ and we have proved the lemma when $|P|<n$. Fix any $g \in \Gammahat_P^0$ supported in $S\defeq S^2-\cup_{i=1}^n D_i$, where $D_i$ are disjoint dividing disks each containing an element of $P$. Let $K$ be the part of Cantor set in $S$, which we may assume to be nonempty and thus is itself a Cantor set.
	
	If $g$ preserves some simple arc $\alpha\subset S-K$ connecting two distinct boundary components $\partial D_i$ and $\partial D_j$, then up to homotopy $g$ is supported in $S-N(\alpha)$ for some tubular neighborhood $N(\alpha)$ of $\alpha$ disjoint from $K$. Note that $g$ is homotopic to the identity in $S-N(\alpha)\cong S^2-(n-1)D^2$. The induction hypothesis gives the desired factorization for $g$.
	
	In the general case, it suffices to factor $g$ as the product of some $g_i\in \Gammahat_P^0$, where each $g_i$ preserves some simple arc connecting two distinct boundary components of $S$. Fix a simple arc $\alpha\subset S-K$ connecting $\partial D_1$ and $\partial D_2$. Then $g\alpha$ and $\alpha$ are homotopic in $S$ since $g$ is trivial in $\MCG(S)$. 
	
	Suppose $g\alpha$ and $\alpha$ are disjoint in $S-K$. Then they cobound a disk $D$ in $S$. If $D\cap K=\emptyset$, then $g$ preserves $\alpha$ and we are done. Otherwise, $D\cap K$ is itself a Cantor set. Pick two disjoint arcs $\beta$ and $\gamma$ in $D-K$ that cut $D$ into three subdisks $D_a,D_b,D_c$ as shown in Figure \ref{fig: factorize}. Let $K_*\defeq K\cap D_*$ with $*=a,b,c$. Then there is $h\in \Gammahat_P^0$ preserving $\gamma$ such that $h(\alpha)=\beta$.
	Indeed, such a homeomorphism $h$ can be constructed by squeezing $D_a\cup D_b$ (resp. $K_a\cup K_b$) to $D_b$ (resp. $K_b$) and expanding part of $K_c$ to the whole $K_c$ and pushing the rest of $K_c$ to $K_a$ along a path $P$ that connects $D_c$ to $D_a$ disjoint from $\gamma$. Such a path exists since $\gamma$ is homotopic to the non-separating arc $\alpha$ in $S$. By symmetry, there is also $h'\in \Gammahat_P^0$ preserving some simple arc homotopic to $\alpha$ such that $h'(g\alpha)=\beta$. Then $h''=h^{-1} h' g\in \Gammahat_P^0$ preserves the arc $\alpha$. Hence $g=h'^{-1} h h'' $ factors $g$ into elements preserving arcs as desired.
	
	Now suppose $g\alpha$ and $\alpha$ intersect. Since $g\alpha$ and $\alpha$ are homotopic in $S$, there is some innermost bigon $D$ in $S$ whose boundary consists of two arcs $\alpha_0\subset \alpha$ and $\alpha_g\subset g\alpha$. We assume $D\cap K$ is nonempty since otherwise the intersection number of $g\alpha$ and $\alpha$ in $S-K$ can be reduced. Similar to the argument above, there are $h,h'\in \Gammahat_P^0$ preserving certain arcs such that $h^{-1} h' g$ maps $\alpha$ to $\alpha'$, where $\alpha'$ is obtained from $g\alpha$ by substituting the subarc $\alpha_g$ by $\alpha_0$. Now $\alpha'$ and $\alpha$ have a smaller intersection number. Repeating this process finitely many times, we obtain a factorization of $g$ into elements in $\Gammahat_P^0$ each preserving some simple arc connecting $\partial D_1$ and $\partial D_2$.	
\end{proof}

\begin{figure}
	\labellist
	\small 
	
	\pinlabel $\alpha$ at 75 110
	\pinlabel $\beta$ at 115 110
	\pinlabel $\gamma$ at 160 110
	\pinlabel $g\alpha$ at 230 110
	\pinlabel $K_a$ at 70 60
	\pinlabel $K_b$ at 125 60
	\pinlabel $K_c$ at 200 60
	\pinlabel $K_{c,1}$ at 180 90
	\pinlabel $K_{c,2}$ at 215 90
	
	\pinlabel $\alpha$ at 365 100
	\pinlabel $h\alpha$ at 405 100
	\pinlabel $h\beta$ at 438 100
	\pinlabel $h\gamma=\gamma$ at 480 100
	\pinlabel $hg\alpha$ at 515 140
	\pinlabel $hK_{c,2}$ at 372 57
	\pinlabel $hK_a$ at 410 60
	\pinlabel $hK_b$ at 445 60
	\pinlabel $hK_{c,1}$ at 500 60
	\endlabellist
	\centering
	\includegraphics[scale=0.7]{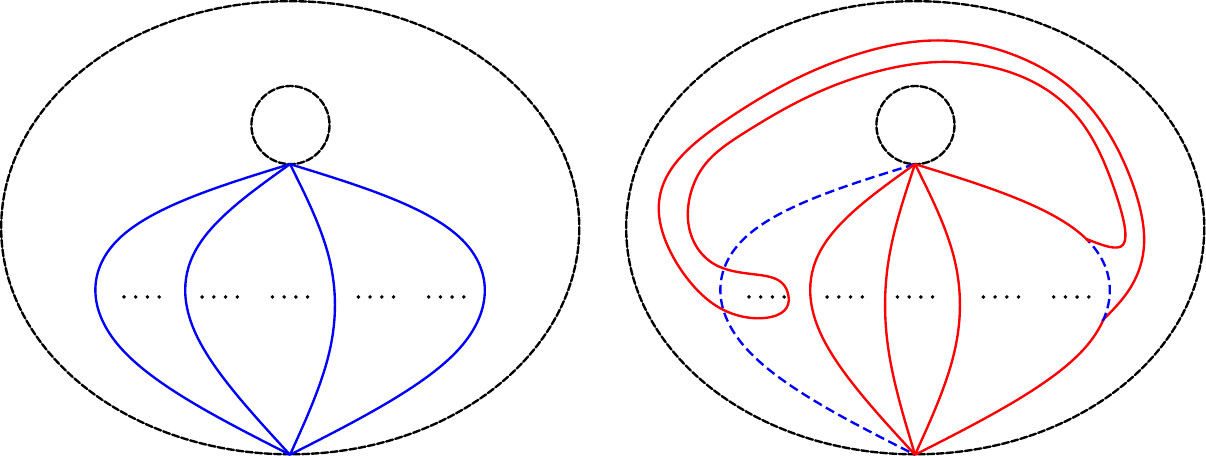}
	\caption{The figure on the left depicts the arcs $\beta,\gamma$ dividing the bigon into three subdisks. The figure on the right shows a homeomorphism $h$ preserving $\gamma$ and mapping $\alpha$ to $\beta$.}\label{fig: factorize}
\end{figure}
	
\begin{corr}\label{corr: H_1 iso to MCG}
	$H_1(\Gammahat_P^0)=0$ and the induced map $\pi_*: H_1(B_P)\to H_1\MCG(S^2-|P|D^2)$ is an isomorphism.
\end{corr}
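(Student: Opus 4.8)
The plan is to derive both statements of the corollary from the single claim that the kernel $\Gammahat_P^0$ of $\pi$ is perfect; once this is known, the isomorphism is formal, and essentially all the content is in establishing $H_1(\Gammahat_P^0)=0$.

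To prove $\Gammahat_P^0$ is perfect, I would combine Lemma~\ref{lemma: factorize} with the Mather commutator trick already used in Lemma~\ref{lemma: Gamma_(r) uniformly perfect and acyclic}. By Lemma~\ref{lemma: factorize} every $g\in\Gammahat_P^0$ is a finite product of elements of $\Gammahat_P^0$ each supported in a dividing disk; moreover, inspecting the inductive construction in that proof (where the relevant disks are neighborhoods of arcs lying in the complement of the round disks around the points of $P$), these dividing disks may be taken disjoint from $P$. So it is enough to show that any $g\in\Gammahat_P^0$ supported in a dividing disk $D$ with $D\cap P=\emptyset$ lies in $[\Gammahat_P^0,\Gammahat_P^0]$. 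Since the exterior of $D$ meets the Cantor set and $P$ is finite, I can choose a Cantor point $p$ in the exterior of $D$ with $p\notin P$ and a homeomorphism $h\in\Gammahat_P^0$, supported away from $P$, such that the disks $D_n\defeq h^n(D)$ are pairwise disjoint and converge to $p$. Then $a(g)\defeq\prod_{n\ge 0}h^n g h^{-n}$ is a well-defined element of $\Gammahat_P^0$ and $g=[a(g),h]$. Hence every element of $\Gammahat_P^0$ is a product of commutators in $\Gammahat_P^0$, i.e.\ $H_1(\Gammahat_P^0)=0$.

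Granting this, the rest is formal. Since $B_P=B\Gammahat_P$ we have $H_1(B_P)=H_1(\Gammahat_P)=\Gammahat_P^{\mathrm{ab}}$, and $\pi$ is surjective with kernel $\Gammahat_P^0$. Perfectness gives $\Gammahat_P^0=[\Gammahat_P^0,\Gammahat_P^0]\subseteq[\Gammahat_P,\Gammahat_P]$, so $\pi$ induces an isomorphism $\Gammahat_P^{\mathrm{ab}}\xrightarrow{\ \sim\ }\MCG(S^2-|P|D^2)^{\mathrm{ab}}$; equivalently, in the five-term exact sequence of $1\to\Gammahat_P^0\to\Gammahat_P\to\MCG(S^2-|P|D^2)\to1$, namely $H_2(\Gammahat_P)\to H_2(\MCG(S^2-|P|D^2))\to H_0(\MCG(S^2-|P|D^2);H_1(\Gammahat_P^0))\to H_1(\Gammahat_P)\xrightarrow{\pi_*}H_1(\MCG(S^2-|P|D^2))\to 0$, the coinvariant term vanishes, so $\pi_*\colon H_1(B_P)\to H_1\MCG(S^2-|P|D^2)$ is an isomorphism. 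The one place that requires care is the construction of the auxiliary homeomorphism $h$ in the commutator trick: it must simultaneously preserve the Cantor set, be the identity near $P$, and push $D$ to a sequence of disjoint disks shrinking to a Cantor point distinct from each point of $P$. This is routine since $D$ is compact and disjoint from the finite set $P$, leaving ample room in $S^2\setminus(P\cup D)$, but it is precisely where the ``relative to $P$'' nature of $\Gammahat_P^0$ genuinely enters; everything downstream is pure homological algebra.
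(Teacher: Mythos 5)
Your proof is correct and takes essentially the same approach as the paper: the paper likewise combines Lemma~\ref{lemma: factorize} with the Mather commutator trick to conclude that every element of $\Gammahat_P^0$ supported in a dividing disk is a commutator, hence $H_1(\Gammahat_P^0)=0$, and then reads off the isomorphism from the tail of the five-term exact sequence for $1\to\Gammahat_P^0\to\Gammahat_P\to\MCG(S^2-|P|D^2)\to 1$. Your explicit observation that the dividing disks produced by Lemma~\ref{lemma: factorize} can be taken disjoint from $P$, so that the Mather suspension can be carried out inside $\Gammahat_P^0$, is a point the paper leaves implicit, and it is correctly handled.
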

\begin{proof}
	Each $h\in \Gammahat_P^0$ supported in a dividing disk is a commutator by the suspension trick. Thus $H_1(\Gammahat_P^0)=0$ by Lemma \ref{lemma: factorize}. The exact sequence
	$$0=H_1(\Gammahat_P^0)\to H_1(B_P)\stackrel{\pi_*}{\to} H_1\MCG(S^2-|P|D^2)\to 0, $$
	shows $\pi_*$ is an isomorphism.
\end{proof}

For $P=P_1\sqcup P_2$, we have an inclusion $f_{P_2}:B_P\to B_{P_1}$ by forgetting the information about $P_2$. When $P_2=\{p\}$ we simply denote $f_{\{p\}}$ by $f_p$.

\begin{lemma}\label{lemma: forgetful surjection base case}
	For any subset $P$ of the Cantor set, we have $H_1(B_P)\cong\Z$ if $|P|=2$ and $H_1(B_P)\cong\Z^3$ if $|P|=3$. Moreover, the image of $$(f_{p*},f_{q*},f_{r*}): H_1(B_{\{p,q,r\}})\to H_1(B_{\{q,r\}})\oplus H_1(B_{\{p,r\}})\oplus H_1(B_{\{p,q\}})\cong \Z^3$$ has index $2$ with basis $e_1+e_2,e_2+e_3,e_3+e_1$, where $e_1,e_2,e_3$ are the generators of the factors. In particular, any pair of maps, say $(f_{p*},f_{q*})$, is surjective.
\end{lemma}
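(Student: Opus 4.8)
The plan is to reduce everything to standard facts about the mapping class groups of the annulus and the pair of pants. For the first assertion I would apply Corollary~\ref{corr: H_1 iso to MCG}, which identifies $H_1(B_P)$ with $H_1(\MCG(S^2-|P|D^2))$, the mapping class group fixing the boundary pointwise (every element of $\Gammahat_P$ is the identity near $P$, hence on the removed disks). When $|P|=2$ this surface is an annulus, whose mapping class group is $\Z$ generated by a boundary-parallel Dehn twist, so $H_1(B_P)\cong\Z$; when $|P|=3$ it is a pair of pants, whose mapping class group is the free abelian group $\Z^3$ on the three boundary Dehn twists, so $H_1(B_P)\cong\Z^3$. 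I would also fix an explicit generating set: for a point $x$ of the Cantor set let $\tau_x$ denote the Dehn twist about a small circle $c_x$ that encloses $x$, encloses no other point of the Cantor set, and separates $x$ from $\infty$ and from the rest of the Cantor set; if $c_x$ is chosen close enough to $x$ then $\tau_x\in\Gammahat_P$ whenever $x\in P$, and under $\pi$ the family $\{\tau_x:x\in P\}$ maps to the basis of boundary Dehn twists, hence forms a basis of $H_1(B_P)$.

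Next I would compute the forgetful maps; by the symmetry interchanging $p,q,r$ it suffices to treat $f_{p*}$. Since $f_p$ is induced by the inclusion $\Gammahat_{\{p,q,r\}}\hookrightarrow\Gammahat_{\{q,r\}}$, I would evaluate it by applying $\pi$ for the pair $\{q,r\}$: delete disks $D_q,D_r$ around $q,r$ and read off the resulting mapping class of the annulus $A=S^2-D_q-D_r$. For compatible choices of $c_p,c_q,c_r$ and of the disks, the circle $c_p$ bounds a disk in $A$, so $f_{p*}(\tau_p)=0$; while $c_q$ is parallel to the boundary component $\partial D_q$ of $A$ and $c_r$ is parallel to $\partial D_r$, so both $f_{p*}(\tau_q)$ and $f_{p*}(\tau_r)$ equal a boundary Dehn twist of $A$. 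The one substantive point is that they equal the \emph{same} generator $e_1$ of $H_1(B_{\{q,r\}})\cong\Z$: a curve parallel to one boundary component of an annulus is carried to a curve parallel to the other by an ambient isotopy fixing $\partial A$ pointwise, so the two boundary Dehn twists agree in the mapping class group of $A$ rel $\partial A$. Hence $f_{p*}(\tau_p)=0$ and $f_{p*}(\tau_q)=f_{p*}(\tau_r)=e_1$, and symmetrically $f_{q*}(\tau_q)=0$, $f_{q*}(\tau_p)=f_{q*}(\tau_r)=e_2$, and $f_{r*}(\tau_r)=0$, $f_{r*}(\tau_p)=f_{r*}(\tau_q)=e_3$.

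Finally I would read off the conclusion. In the bases $(\tau_p,\tau_q,\tau_r)$ and $(e_1,e_2,e_3)$ the homomorphism $(f_{p*},f_{q*},f_{r*})\colon\Z^3\to\Z^3$ is given by the matrix with $0$ on the diagonal and $1$ off the diagonal, which has determinant $2$; so it is injective with image of index $2$, and that image is spanned by the columns $e_2+e_3$, $e_1+e_3$, $e_1+e_2$, i.e.\ has the stated basis $e_1+e_2,e_2+e_3,e_3+e_1$. Deleting one of the three rows leaves a $2\times 3$ integer matrix with a $\pm1$ minor --- for $(f_{p*},f_{q*})$ the first two columns give $\det\bigl(\begin{smallmatrix}0&1\\1&0\end{smallmatrix}\bigr)=-1$ --- so each of the three pairs of forgetful maps is surjective. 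I expect the main obstacle --- indeed, the only place where the argument carries real content --- to be the orientation bookkeeping of the previous paragraph: one must check that $\tau_q$ and $\tau_r$ land on the \emph{same} generator of $\Z$ and not on opposite ones, since the opposite-sign answer would give determinant $0$ and a false statement.
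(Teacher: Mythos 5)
Your proposal is correct and follows essentially the same route as the paper: identify $H_1(B_P)$ with $H_1\MCG(S^2-|P|D^2)$ via Corollary~\ref{corr: H_1 iso to MCG}, recognize the annulus and pair-of-pants mapping class groups, and compute the forgetful maps on boundary Dehn twist generators. The minor differences are that the paper derives $\MCG(S^2-3D^2)\cong\Z^3$ from the central extension $1\to\Z^2\to\MCG(S)\to\mathrm{PB}_2\to1$ whereas you cite it as a standard fact, and conversely you spell out the forgetful-map matrix and the sign bookkeeping (the two boundary Dehn twists of an annulus agreeing rel $\partial A$) that the paper compresses into the single assertion that the images are ``exactly the basis elements as claimed.''
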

\begin{proof}
	We have $H_1(B_P)\cong H_1\MCG(S^2-|P|D^2)$ by Corollary \ref{corr: H_1 iso to MCG}. When $|P|=2$, the computation follows from the fact that $\MCG(S^2-|P|D^2)\cong\Z$ is generated by a Dehn twist.
	
	For $S=S^2-D_1 \cup D_2 \cup D_3$, the Dehn twists around $\partial D_1$ and $\partial D_2$ generate a $\Z^2$ subgroup which gives rise to a central extension 
	$$1\to \Z^2\to \MCG(S)\to \mathrm{PB}_2\to 1,$$
	where $\mathrm{PB}_2\cong \Z$ is the pure braid group with two strands keeping track of the configuration of $D_1$ and $D_2$ in $S^2-D_3$; see e.g.\/ \cite{Farb_Margalit} for
	details. Then the Dehn twist around $\partial D_3$ hits the generator of $\mathrm{PB}_2$. It follows that these three Dehn twists generate $\MCG(S)$, and their images under $(f_{p*},f_{q*},f_{r*})$ are exactly the basis elements as claimed. This implies $H_1(B_P)\cong \MCG(S)\cong\Z^3$.
\end{proof}

\begin{lemma}\label{lemma: H_1 union of B_pq}
	Let $P$ and $\{q\}$ be disjoint subsets of the Cantor set, where $2\le |P|<\infty$. Then $H_1(\cup_{p\in P} B_{\{p,q\}})=0$.
\end{lemma}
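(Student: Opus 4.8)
The plan is to induct on $|P|$ using Mayer--Vietoris, just as in Lemma~\ref{lemma: H_1 union of B_p}, the new ingredient being the surjectivity assertion of Lemma~\ref{lemma: forgetful surjection base case}. Note first that every $B_S$ is a classifying space, hence path-connected, and that $B_S\cap B_T=B_{S\cup T}$.

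For the base case $|P|=2$, say $P=\{p_1,p_2\}$, I would apply Mayer--Vietoris to $A=B_{\{p_1,q\}}$ and $B=B_{\{p_2,q\}}$, so that $A\cup B=\cup_{p\in P}B_{\{p,q\}}$ is the space in question and $A\cap B=B_{\{p_1,p_2,q\}}$. Since $A$, $B$ and $A\cap B$ are all path-connected, $H_0(A\cap B)\to H_0(A)\oplus H_0(B)$ is injective, while $H_1(A\cap B)\to H_1(A)\oplus H_1(B)$ is (up to signs) the pair of forgetful maps $(f_{p_2*},f_{p_1*})\colon H_1(B_{\{p_1,p_2,q\}})\to H_1(B_{\{p_1,q\}})\oplus H_1(B_{\{p_2,q\}})$, which is surjective by Lemma~\ref{lemma: forgetful surjection base case}. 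The Mayer--Vietoris sequence then forces $H_1(A\cup B)=0$.

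For the inductive step I would write $P=P'\sqcup\{p_0\}$ with $|P'|\ge 2$ and take $A=\cup_{p\in P'}B_{\{p,q\}}$, $B=B_{\{p_0,q\}}$, so that $A\cup B=\cup_{p\in P}B_{\{p,q\}}$ and $A\cap B=\cup_{p\in P'}B_{\{p,p_0,q\}}$. Both $A$ and $A\cap B$ are path-connected, since in each of these unions all the pieces contain the nonempty common subcomplex $B_{P\cup\{q\}}$, so again the $H_0$ part of Mayer--Vietoris is harmless. By the induction hypothesis $H_1(A)=0$, so it remains only to show that $\iota_{B*}\colon H_1(A\cap B)\to H_1(B_{\{p_0,q\}})\cong\Z$ is surjective. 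Fixing any $p\in P'$, the inclusion $B_{\{p,p_0,q\}}\hookrightarrow A\cap B\hookrightarrow B_{\{p_0,q\}}$ is the forgetful map $f_p$, and $f_{p*}\colon H_1(B_{\{p,p_0,q\}})\to H_1(B_{\{p_0,q\}})\cong\Z$ is surjective: by Lemma~\ref{lemma: forgetful surjection base case} the image of $(f_{p*},f_{p_0*},f_{q*})$ is the index-$2$ subgroup $\{(a,b,c):a+b+c\text{ even}\}$ of $\Z^3$, which surjects onto each coordinate. Hence $\iota_{B*}$ is onto, and Mayer--Vietoris gives $H_1(A\cup B)=0$, completing the induction.

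There is no real obstacle here; the points that require a little care are (i) correctly identifying the Mayer--Vietoris maps $H_1(A\cap B)\to H_1(A)\oplus H_1(B)$ with the relevant forgetful maps and invoking the surjectivity half of Lemma~\ref{lemma: forgetful surjection base case} --- in the inductive step it is essential that $H_1(A)=0$, so that only a single-coordinate surjectivity statement is needed --- and (ii) the bookkeeping that all the subspaces involved are path-connected, so that the $H_0$ terms in Mayer--Vietoris contribute nothing. One should also keep in mind that the hypothesis $|P|\ge 2$ is genuinely used (for $|P|=1$ one has $H_1(B_{\{p,q\}})\cong\Z\ne 0$), which is exactly why the induction needs the separate base case $|P|=2$ and the restriction $|P'|\ge 2$ in the inductive step.
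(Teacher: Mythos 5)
Your argument is correct and follows essentially the same route as the paper: the same Mayer--Vietoris decomposition with $A=\cup_{p\in P'}B_{\{p,q\}}$ and $B=B_{\{p_0,q\}}$, the same induction on $|P|$, and the same invocation of the surjectivity part of Lemma~\ref{lemma: forgetful surjection base case} (first for a pair of forgetful maps in the base case $|P|=2$, then only for a single coordinate in the inductive step once $H_1(A)=0$ is available). The only difference is that you spell out the path-connectedness bookkeeping for the $H_0$ terms, which the paper leaves implicit.
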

\begin{proof}
	Fix $r\in P$ and let $P'=P-\{r\}$. We obtain the following exact sequence by applying Mayer--Vietoris to the pair $(\cup_{p\in P'} B_{\{p,q\}},B_{\{r,q\}})$ and noticing that $(\cup_{p\in P'} B_{\{p,q\}})\cap B_{\{r,q\}}=\cup_{p\in P'} B_{\{p,q,r\}}$:
	
	$$H_1(\cup_{p\in P'} B_{\{p,q,r\}}) \overset{\alpha}{\to} H_1(\cup_{p\in P'} B_{\{p,q\}}) \oplus H_1(B_{\{r,q\}}) \to H_1(\cup_{p\in P} B_{\{p,q\}})\to 0.$$
	
	It suffices to show $\alpha=(i_1,i_2)$ is surjective, which we now prove by induction on $|P|$. The base case $|P|=2$ follows from Lemma \ref{lemma: forgetful surjection base case}. When $|P|\ge 3$, we have $H_1(\cup_{p\in P'} B_{\{p,q\}})=0$ by the induction hypothesis. For any specific choice of $p_0\in P'$, the composition 
	$$H_1(B_{\{p_0,q,r\}}) \to  H_1(\cup_{p\in P'} B_{\{p,q,r\}})\overset{i_2}{\to} H_1(B_{\{r,q\}})$$
	is surjective. It follows that $i_2$ and hence $\alpha$ are surjective.
\end{proof}

\begin{lemma}\label{lemma: key H_2 coker}
	Let $P$ and $\{r\}$ be disjoint finite subsets of the Cantor set. Consider the map $(i_P)_*: H_2(\cup_{p\in P} B_{\{p,r\}})\to H_2(\cup_{p\in P} B_{\{p\}})$ induced by inclusion. Then either $\mathrm{coker}(i_P)_*=\Z/2$ for all $P$ with $2\le |P|<\infty$, or there is some $N\ge 3$ such that $\mathrm{coker}(i_P)_*=0$ for all $P$ with $N\le |P|<\infty$. 
\end{lemma}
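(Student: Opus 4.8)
The plan is to compute the cokernel by a double induction, comparing the Mayer--Vietoris sequences that build up $\cup_{p\in P}B_{\{p,r\}}$ and $\cup_{p\in P}B_{\{p\}}$ in parallel, and then to propagate the answer by a limiting/compatibility argument. First I would handle the base case $|P|=2$, say $P=\{p,q\}$. Here $\cup_{p\in P}B_{\{p\}}=B_{\{p\}}\cup B_{\{q\}}$ with intersection $B_{\{p,q\}}$, which is acyclic below degree $2$ by Lemma~\ref{lemma: suspension} together with Lemma~\ref{lemma: H_1 union of B_p}; and $\cup_{p\in P}B_{\{p,r\}}=B_{\{p,r\}}\cup B_{\{q,r\}}$ with intersection $B_{\{p,q,r\}}$. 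Writing down both Mayer--Vietoris sequences and the natural map of exact sequences between them (induced by the forgetful inclusions $B_{\{p,r\}}\to B_{\{p\}}$ etc.), one reduces $\mathrm{coker}(i_{\{p,q\}})_*$ to a comparison of the low-degree homology of $B_{\{p,q,r\}}$, $B_{\{p,r\}}$, $B_{\{q,r\}}$, and of the acyclic pieces $B_{\{p\}},B_{\{q\}},B_{\{p,q\}}$. Using Corollary~\ref{corr: H_1 iso to MCG} to identify $H_1(B_P)$ with $H_1\MCG(S^2-|P|D^2)$, and Lemma~\ref{lemma: forgetful surjection base case} (the index-$2$ image statement), the mismatch between the $H_1$ of the triple level and the pair level is exactly an index-$2$ lattice inclusion, so a diagram chase produces either a $\Z/2$ or a $0$ in the cokernel at this stage — and crucially the \emph{same} alternative will persist, which is the content of the dichotomy.

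Next I would run the inductive step on $|P|$. Fix $r$ and an element $s\in P$, set $P'=P-\{s\}$, and apply Mayer--Vietoris to the covers $(\cup_{p\in P'}B_{\{p,r\}},\,B_{\{s,r\}})$ and $(\cup_{p\in P'}B_{\{p\}},\,B_{\{s\}})$, whose intersections are $\cup_{p\in P'}B_{\{p,r,s\}}$ and $\cup_{p\in P'}B_{\{p,s\}}$ respectively; note $B_{\{s\}}$ is acyclic (Lemma~\ref{lemma: suspension}) and $H_1(\cup_{p\in P'}B_{\{p,s\}})=0$ (Lemma~\ref{lemma: H_1 union of B_pq}), while $H_1(\cup_{p\in P'}B_{\{p,r\}})=0$ and $H_1(\cup_{p\in P}B_{\{p,r\}})=0$ by the same lemma, and $H_1(\cup_{p\in P'}B_{\{p\}})=0$ by Lemma~\ref{lemma: H_1 union of B_p}. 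Chasing the resulting map of exact sequences, $\mathrm{coker}(i_P)_*$ is built out of $\mathrm{coker}(i_{P'})_*$ (which by induction is $\Z/2$ or $0$ according to the prevailing alternative), $\mathrm{coker}(i_{\{s,r\}})_*$ (the base case, same alternative), and a correction term coming from the map on $H_2$ of the intersections $\cup_{p\in P'}B_{\{p,r,s\}}\to \cup_{p\in P'}B_{\{p,s\}}$. The point is that in the ``$\Z/2$'' branch this correction respects the common $\Z/2$ and the cokernel stays $\Z/2$; in the ``$0$'' branch the cokernel can only shrink or stabilize, which forces the existence of a threshold $N$ beyond which it is identically $0$. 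Monotonicity in $|P|$ — that enlarging $P$ can only add relations, never new classes, in the cokernel — is what makes the ``eventually $0$'' formulation legitimate rather than requiring $\mathrm{coker}=0$ for all $P\ge 3$ at once.

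The main obstacle I expect is controlling the $H_2$-level of the \emph{intersection} spaces $\cup_{p\in P'}B_{\{p,r,s\}}$ and the comparison map into $\cup_{p\in P'}B_{\{p,s\}}$: Mayer--Vietoris only gives exactness, so to pin down the cokernel one needs to know that the connecting-map contribution to $H_1$ of the triple-index unions vanishes (which Lemma~\ref{lemma: H_1 union of B_pq} provides) \emph{and} that the induced maps on $H_2$ are understood well enough to see that no new $2$-torsion or free summand sneaks in. Concretely the delicate bookkeeping is that at the two-point level the obstruction class generating $\Z/2$ must be shown to be in the image of the three-point level in the ``vanishing'' scenario but not in the ``persistent'' scenario, and that this property is inherited through the Mayer--Vietoris patching. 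I would organize this as: (i) establish a map of long exact Mayer--Vietoris sequences relating the $B_{\{p,r\}}$-tower to the $B_{\{p\}}$-tower; (ii) use acyclicity of $B_{\{p\}}$ and the $H_1$-vanishing lemmas to collapse most terms; (iii) reduce to a single ``comparison square'' whose cokernel is governed by Lemma~\ref{lemma: forgetful surjection base case}'s index-$2$ phenomenon; (iv) observe the cokernel is a monotone (quotient-compatible) function of $P$ under inclusions, so the two-alternative dichotomy follows. Everything else — the Mayer--Vietoris set-up, the identification of intersections, the acyclicity inputs — is routine given the lemmas already proved.
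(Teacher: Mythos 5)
Your strategy is essentially the paper's: compare the two Mayer--Vietoris sequences (for $\cup_{p}B_{\{p,r\}}$ and $\cup_{p}B_{\{p\}}$) via naturality of the forgetful maps, handle $|P|=2$ using the index-$2$ structure from Lemma~\ref{lemma: forgetful surjection base case}, and then exploit monotonicity of the cokernel in $|P|$. One correction, though: you say that at $|P|=2$ the diagram chase ``produces either a $\Z/2$ or a $0$'' and that ``the same alternative will persist,'' but the base case is unambiguous. For $|P|=2$, acyclicity of $B_{\{p\}}$ and $B_{\{q\}}$ forces $H_2(B_{\{p\}}\cup B_{\{q\}})\cong H_1(B_{\{p,q\}})\cong\Z$; the image of $(i_P)_*$ is computed by first taking $\mathrm{Im}\,\partial=\ker(f_{p*},f_{q*})\subset H_1(B_{\{p,q,r\}})\cong\Z^3$ and then applying $f_{r*}$, and the explicit index-$2$ description of $\mathrm{Im}(f_{p*},f_{q*},f_{r*})$ shows this lands exactly in $2\Z\subset\Z$. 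So $\mathrm{coker}(i_P)_*=\Z/2$ on the nose at $|P|=2$; the dichotomy lives only in the inductive step. Your closing monotonicity observation is the real crux, and the paper packages it more tightly than your ``correction term'' framing suggests: for $|P|\ge 3$ the vanishing lemmas collapse the map of Mayer--Vietoris sequences to a surjection $\mathrm{coker}(i_{P'})_*\twoheadrightarrow\mathrm{coker}(i_P)_*$, so the cokernels form a nonincreasing chain of quotients of $\Z/2$ — nothing can ever be added — and that gives precisely the stated dichotomy.
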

\begin{proof}
	Fix $q\in P$ and let $P=P'\cup \{q\}$. Consider the inclusion of pairs $(\cup_{p\in P} B_{\{p,r\}},B_{\{q,r\}})\to (\cup_{p\in P} B_{\{p\}},B_{\{q\}})$. By naturality of Mayer--Vietoris, we have the following commutative diagram, where the columns are exact.

	\[	
	\begin{tikzcd}[column sep=5.4em]
	H_2(\cup_{p\in P'} B_{\{p,r\}}) \oplus H_2(B_{\{q,r\}}) 	\arrow[d] \arrow[r, "{((i_{P'})_*,(i_{\{q\}})_*)}"]	
	& H_2(\cup_{p\in P'} B_{\{p\}}) \oplus H_2(B_{\{q\}})		\arrow[d]\\
	H_2(\cup_{p\in P} B_{\{p,r\}})								\arrow[d, "\partial"] \arrow[r, "{(i_{P})_*}"] 
	& H_2(\cup_{p\in P} B_{\{p\}})								\arrow[d]\\
	H_1(\cup_{p\in P'} B_{\{p,q,r\}})							\arrow[d, "{(f_q,f_{P'})}"] \arrow[r, "f_r"]
	& H_1(\cup_{p\in P'} B_{\{p,q\}})							\arrow[d]\\
	H_1(\cup_{p\in P'} B_{\{p,r\}}) \oplus H_1(B_{\{q,r\}})		\arrow[r]
	& H_1(\cup_{p\in P'} B_{\{p\}}) \oplus H_1(B_{\{q\}})	
	\end{tikzcd}
	\]
	
	The last term on the right vanishes by Lemma \ref{lemma: H_1 union of B_p}. 
	
	When $|P|=2$, let $p$ be the unique element of $P'$. Then the first term on the right vanishes by Lemma \ref{lemma: H_1 union of B_p}, and thus $H_2(\cup_{p\in P}B_{\{p\}})\to H_1(B_{\{p,q\}})\cong \Z$ is an isomorphism. By Lemma \ref{lemma: forgetful surjection base case}, the restriction $f_r: {\rm Im} \partial\to H_1(B_{\{p,q\}})$ has image $2\Z\subset \Z$. It follows that $\mathrm{coker}(i_P)_*=\Z/2$.
	
	Suppose $|P|\ge3$. Then Lemma \ref{lemma: H_1 union of B_pq} applies to $|P'|\ge 2$, which together with Lemma \ref{lemma: suspension} reduces the diagram above to
	
	\[	
	\begin{tikzcd}[column sep=5.4em]
	H_2(\cup_{p\in P'} B_{\{p,r\}})						 		\arrow[d] \arrow[r, "{(i_{P'})_*}"]	
	& H_2(\cup_{p\in P'} B_{\{p\}}) 							\arrow[d]\\
	H_2(\cup_{p\in P} B_{\{p,r\}})								\arrow[d, "\partial"] \arrow[r, "{(i_{P})_*}"] 
	& H_2(\cup_{p\in P} B_{\{p\}})								\arrow[d]\\
	{\rm Im} \partial											\arrow[r, "f_r"]
	& 0							
	\end{tikzcd}
	\]
	
	From the diagram we see $\mathrm{coker}(i_{P'})_*$ surjects $\mathrm{coker}(i_P)_*$. Thus the conclusion follows by induction on $|P|$.
	
\end{proof}

\begin{lemma}\label{lemma: H_2 stable}
	Let $P\subset Q$ be two finite subsets of the Cantor set. Then there is some $N\ge 3$ such that the inclusion $H_2(\cup_{p\in P} B_{\{p\}})\to H_2(\cup_{p\in Q} B_{\{p\}})$ is isomorphic whenever $|P|>N$, and $H_2(\cup_{p\in P} B_{\{p\}})$ stabilizes to either $0$ or $\Z/2$.
\end{lemma}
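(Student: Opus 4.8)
The plan is to pass from $P$ to $Q$ by adjoining the points of $Q\setminus P$ one at a time, compute the effect of a single adjunction on $H_2$ by Mayer--Vietoris, and then read off the conclusion from the dichotomy of Lemma~\ref{lemma: key H_2 coker}. For the single step, fix a finite subset $P$ of the Cantor set with $|P|\ge 2$ and a point $r$ of the Cantor set with $r\notin P$, and set $Q=P\cup\{r\}$. Since $B_{\{p\}}\cap B_{\{r\}}=B_{\{p,r\}}$ for every $p$, the subcomplexes $\cup_{p\in P}B_{\{p\}}$ and $B_{\{r\}}$ of $B\Gammahat$ cover $\cup_{p\in Q}B_{\{p\}}$ with intersection $\cup_{p\in P}B_{\{p,r\}}$. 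In the resulting Mayer--Vietoris sequence the summand $H_2(B_{\{r\}})$ vanishes by Lemma~\ref{lemma: suspension} and the term $H_1(\cup_{p\in P}B_{\{p,r\}})$ vanishes by Lemma~\ref{lemma: H_1 union of B_pq} (applicable precisely because $|P|\ge 2$), so the sequence reduces to an exact sequence
$$H_2(\cup_{p\in P}B_{\{p,r\}})\xrightarrow{(i_P)_*}H_2(\cup_{p\in P}B_{\{p\}})\xrightarrow{j_P}H_2(\cup_{p\in Q}B_{\{p\}})\to 0,$$
with $(i_P)_*$ and $j_P$ induced by inclusions. In particular $j_P$ identifies $H_2(\cup_{p\in Q}B_{\{p\}})$ with $\mathrm{coker}(i_P)_*$, the very group analyzed in Lemma~\ref{lemma: key H_2 coker}.

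Next I would feed in that dichotomy. If $\mathrm{coker}(i_P)_*=\Z/2$ for every $P$ with $2\le|P|<\infty$, then the previous step gives $H_2(\cup_{p\in P}B_{\{p\}})=\Z/2$ for all $P$ with $|P|\ge 3$, and I take $N=3$. Otherwise there is some $N_0\ge 3$ with $\mathrm{coker}(i_P)_*=0$ for all $|P|\ge N_0$, whence $H_2(\cup_{p\in P}B_{\{p\}})=0$ for all $|P|>N_0$, and I take $N=N_0$. Either way $H_2(\cup_{p\in P}B_{\{p\}})$ is a single fixed group, $\Z/2$ or $0$, for all $|P|>N$. Finally, for arbitrary $P\subset Q$ with $|P|>N$, write $P=P_0\subset P_1\subset\cdots\subset P_k=Q$ with $|P_{i+1}|=|P_i|+1$; each $P_i$ has size $>N\ge 3$, so the single-step analysis applies and each $j_{P_i}$ is a surjection between two copies of the same finite group, hence an isomorphism. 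Composing, the inclusion-induced map $H_2(\cup_{p\in P}B_{\{p\}})\to H_2(\cup_{p\in Q}B_{\{p\}})$ is an isomorphism, and the stable value is $0$ or $\Z/2$ according to which alternative of Lemma~\ref{lemma: key H_2 coker} holds.

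I do not expect a serious obstacle, since the substantive content --- the vanishing $H_1(\cup_{p\in P}B_{\{p,r\}})=0$ and the determination of $\mathrm{coker}(i_P)_*$ --- is already handled in the preceding lemmas. The one point needing care is index bookkeeping: every Mayer--Vietoris sequence above requires $|P|\ge 2$, which forces the sets in the telescoping to have at least three elements and hence $N\ge 3$. This threshold is genuinely needed: a separate Mayer--Vietoris for $(B_{\{p_1\}},B_{\{p_2\}})$, using $H_1(B_{\{p_1,p_2\}})\cong\Z$ from Lemma~\ref{lemma: forgetful surjection base case}, shows $H_2(B_{\{p_1\}}\cup B_{\{p_2\}})\cong\Z$ when $|P|=2$, so stabilization cannot begin before $|P|=3$.
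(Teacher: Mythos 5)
Your proof is correct and takes essentially the same route as the paper: build up by adjoining one point at a time, kill $H_2(B_{\{r\}})$ via Lemma~\ref{lemma: suspension} and $H_1(\cup_{p\in P}B_{\{p,r\}})$ via Lemma~\ref{lemma: H_1 union of B_pq} in Mayer--Vietoris, identify $H_2(\cup_{p\in Q}B_{\{p\}})$ with $\mathrm{coker}(i_P)_*$, and feed in the dichotomy of Lemma~\ref{lemma: key H_2 coker}. The only cosmetic difference is that you make the telescoping chain $P=P_0\subset\cdots\subset P_k=Q$ explicit and phrase the isomorphism conclusion as ``surjection between finite groups of the same order,'' whereas the paper reads it directly off the reduced exact sequence; both are valid.
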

\begin{proof}
	Consider the two possibilities in Lemma \ref{lemma: key H_2 coker}. Suppose $\mathrm{coker}(i_{P'})_*=\Z/2$ for all $P'$ with $2\le |P'|<\infty$. Let $|P|>2$ and $P=P'\sqcup\{q\}$. Note that $(\cup_{p\in P'} B_{\{p\}})\cap B_{\{q\}}=\cup_{p\in P'} B_{\{p,q\}}$, by Mayer--Vietoris, we have the following exact sequence.
	
	\[
	H_2(\cup_{p\in P'} B_{\{p,q\}})\overset{j}{\to} H_2(\cup_{p\in P'} B_{\{p\}})\oplus H_2(B_{\{q\}})\to H_2(\cup_{p\in P} B_{\{p\}})\to H_1(\cup_{p\in P'} B_{\{p,q\}}).
	\]
	
	Since $|P'|=|P|-1\ge 2$, the last term in the exact sequence above vanishes by Lemma \ref{lemma: H_1 union of B_pq}. On the other hand, Lemma \ref{lemma: suspension} and $\mathrm{coker}(i_{P'})_*=\Z/2$ implies the map $j$ has index two. Thus $H_2(\cup_{p\in P} B_{\{p\}})\cong \Z/2$. Moreover, if we further have $|P'|>2$, the exact sequence shows that the inclusion $H_2(\cup_{p\in P'} B_{\{p\}})\to H_2(\cup_{p\in P} B_{\{p\}})$ is an isomorphism. This verifies the $\Z/2$ case of our assertion with $N=2$.
	
	Now suppose	there is some $N\ge 3$ such that $\mathrm{coker}(i_{P'})_*=0$ for all $P'$ with $N\le |P'|<\infty$. Again let $|P|>N$ and $P=P'\sqcup\{q\}$. The map $j$ in the same exact sequence above is now surjective. Thus $H_2(\cup_{p\in P} B_{\{p\}})=0$ for all such $P$, which establishes the other case of our assertion.
\end{proof}
\begin{rmk}
	In contrast, a similar Mayer--Vietoris computation shows $H_2(\cup_{p\in P} B_{\{p\}})=\Z$ when $|P|=2$.
\end{rmk}

The lemmas above narrow down $H_2(B\Gammahat)$ to two possibilities, either $\Z/2$ or $0$. Lemma \ref{lemma: H_2 surj Z/2} below gives the extra information we need. Denote the Cantor subset of $S^2$ by $K$. Let $\Homeorm^+(S^2,K)$ be the group of orientation preserving homeomorphisms of $S^2$ preserving $K$. One can show that any $f\in \Homeorm^+(S^2-K)$ uniquely extends to $\bar{f}\in \Homeorm^+(S^2,K)$, and thus the restriction map gives an isomorphism $\Homeorm^+(S^2,K)\cong \Homeorm^+(S^2-K)$.

\begin{lemma}\label{lemma: H_2 surj Z/2}
	$H_2(\Gammahat)$ is isomorphic to $H_2(B\Homeorm^+(S^2,K))$, which surjects $\Z/2$ via the map $i_*:H_2(B\Homeorm^+(S^2,K))\to H_2(B\Homeorm^+(S^2))\cong \Z/2$ induced by inclusion.
\end{lemma}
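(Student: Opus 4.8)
The statement has two halves; here is how I would handle each.

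\emph{The isomorphism $H_2(\Gammahat)\cong H_2(B\Homeorm^+(S^2,K))$.} I regard $\Homeorm^+(S^2,K)\cong\Homeorm^+(S^2-K)$ as a topological group with the compact--open topology. The extension of topological groups $1\to\Homeorm_0(S^2-K)\to\Homeorm^+(S^2-K)\to\Gammahat\to1$ (with $\Gammahat=\pi_0$, discrete) gives a fibration $B\Homeorm_0(S^2-K)\to B\Homeorm^+(S^2-K)\to B\Gammahat$. It is known that the identity component of the homeomorphism group of an infinite-type surface is contractible; for our purposes it is enough that $\Homeorm_0(S^2-K)$ be simply connected, so that $B\Homeorm_0(S^2-K)$ is $2$-connected and the Serre spectral sequence gives $H_k(B\Homeorm^+(S^2,K);\Z)\cong H_k(\Gammahat;\Z)$ for $k\le2$. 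In particular $H_1(B\Homeorm^+(S^2,K);\Z)\cong H_1(\Gammahat;\Z)=0$ by Lemma~\ref{lemma: H_1}, a fact I use below.

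\emph{Surjectivity onto $\Z/2$ with $\Z/2$ coefficients.} It is classical that $\Homeorm^+(S^2)$ is homotopy equivalent to $SO(3)$, so $H_2(B\Homeorm^+(S^2);\Z)\cong H_2(BSO(3);\Z)=\Z/2$ (reducing isomorphically to $H_2(BSO(3);\Z/2)$) and $H^*(B\Homeorm^+(S^2);\Z/2)\cong(\Z/2)[w_2,w_3]$. Since any two Cantor sets in $S^2$ lie in the same $\Homeorm^+(S^2)$-orbit, I may take $K$ invariant under a rotation $\rho$ of $S^2$ through angle $\pi$ about an axis disjoint from $K$ --- for instance $K=C_0\sqcup\rho(C_0)$ for a small Cantor set $C_0$ --- so that $\rho\in\Homeorm^+(S^2,K)$ generates a copy of $\Z/2$. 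The composite $B\Z/2\to B\Homeorm^+(S^2,K)\xrightarrow{i}B\Homeorm^+(S^2)\simeq BSO(3)$ is, up to homotopy, the map classifying the inclusion $\langle\rho\rangle\hookrightarrow SO(3)$; the associated three-dimensional real representation is two copies of the sign representation $\sigma$ plus a trivial summand, so $w_2$ restricts to $w_1(\sigma)^2$, the nonzero element of $H^2(\Z/2;\Z/2)\cong\Z/2$. Hence $i^*w_2\ne0$, equivalently $i_*\colon H_2(B\Homeorm^+(S^2,K);\Z/2)\to H_2(B\Homeorm^+(S^2);\Z/2)$ is onto.

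\emph{Passing to $\Z$ coefficients.} Because $H_1(B\Homeorm^+(S^2,K);\Z)=0$, the Bockstein sequence makes the reduction $H_2(B\Homeorm^+(S^2,K);\Z)\to H_2(B\Homeorm^+(S^2,K);\Z/2)$ surjective; the analogous reduction for $B\Homeorm^+(S^2)$ is an isomorphism $\Z/2\to\Z/2$. Naturality of the Bockstein together with the $\Z/2$-coefficient surjectivity then gives that $i_*\colon H_2(B\Homeorm^+(S^2,K);\Z)\to H_2(B\Homeorm^+(S^2);\Z)\cong\Z/2$ is surjective. (Combined with Lemmas~\ref{lemma: key H_2 coker}--\ref{lemma: H_2 stable} this pins down $H_2(\Gammahat;\Z)=\Z/2$, after which $H^2(\Gammahat;\Z)=0$ follows from universal coefficients and $H_1(\Gammahat;\Z)=0$.)

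\emph{Main obstacle.} The only input that is not a brief bundle-theoretic or homological computation is the statement about the topology of $\Homeorm^+(S^2-K)$ in the first part: one needs that its identity component is simply connected (contractibility being more than enough) so that it does not interfere in degrees $\le2$. If one wanted to avoid invoking this, the content of the surjectivity is exactly that the nontrivial element of $\pi_1(\Homeorm^+(S^2))\cong\Z/2$ lies in the image of $\pi_1(\Homeorm^+(S^2,K))$, and the finite-order homeomorphism $\rho$ is what realizes it at the level of classifying spaces; so in any formulation the crux is the interplay between the big mapping class group and this spin class.
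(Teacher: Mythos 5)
Your proof is correct, and the second half takes a genuinely different route from the paper's. For the first assertion both you and the paper invoke Yagasaki's theorem that $\Homeorm_0(S^2-K)$ is contractible (you correctly observe that simple connectivity would already suffice for degrees $\le 2$). For the surjectivity onto $\Z/2$, however, the paper works directly with integral coefficients: it takes a Klein four-group $K_4 = \Z/2 \times \Z/2$ inside $\Homeorm^+(S^2,K)$ (realized by two commuting $\pi$-rotations with perpendicular axes, after conjugating into $\mathrm{SO}(3)$ via Zimmermann), uses $H_2(K_4;\Z)\cong\Z/2$ from K\"unneth, and shows $H_2(BK_4;\Z)\to H_2(B\mathrm{SO}(3);\Z)$ is onto by exhibiting a degree-one map $\R P^1 \times \R P^1 \cong T^2 \to S^2$ that makes the relevant classifying diagram commute up to homotopy. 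You instead use a single cyclic $\Z/2$ (one $\pi$-rotation preserving a suitably chosen $K$), which has $H_2(\Z/2;\Z)=0$ and so cannot detect the class integrally; you compensate by first proving surjectivity with $\Z/2$-coefficients via the Stiefel--Whitney computation $w(2\sigma\oplus 1) = 1 + t^2$, and then upgrading to integral coefficients using the vanishing of $H_1(\Gammahat;\Z)$ and naturality of mod-$2$ reduction. Both arguments are sound; yours trades the $K_4$/degree-one-map geometry and the appeal to Zimmermann's classification for a characteristic-class computation plus a Bockstein/UCT bookkeeping step, and needs only a single involution rather than a commuting pair, which is a slightly lighter geometric input.
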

\begin{proof}
	The identity component of $\Homeorm^+(S^2-K)$ is contractible by \cite[Theorem 1.1]{Yagasaki}. Thus $B\Gammahat$ and $B\Homeorm^+(S^2,K)$ are weakly homotopy equivalent, so the first assertion immediately follows. 
	
	Let $K_4=\Z/2\times \Z/2$ be the Klein $4$-group. Realize $BK_4$ as $\R P^\infty\times \R P^\infty$ and note that $H_2(K_4)\cong \Z/2$ by the K\"unneth formula, where the generator is represented by $[\R P^1\times \R P^1]$. We claim that any inclusion of a $K_4$ subgroup into $\Homeorm^+(S^2)$ induces an isomorphism $H_2(K_4)\cong H_2(B\Homeorm^+(S^2))$. This implies the second assertion by considering any $K_4$ subgroup of $\Homeorm^+(S^2,K)$ and the composition $K_4\to \Homeorm^+(S^2,K)\to \Homeorm^+(S^2)$.
	
	To prove the claim we may assume $K_4$ to be generated by two rigid rotations by angle $\pi$ with perpendicular axes, for all $K_4$ subgroups of $\Homeorm^+(S^2)$ conjugate into $\mathrm{SO}(3)$ \cite{Zimmermann}. Let $f:BK_4\to B\Homeorm^+(S^2)$ be the map induced by the inclusion. Recall that $B\Homeorm^+(S^2)\simeq B\mathrm{SO}(3)$ by reduction of structure group, and that $H_2(B\mathrm{SO}(3))\cong \pi_2(B\mathrm{SO}(3))\cong \pi_1(\mathrm{SO}(3))\cong \Z/2$ by the Hurewicz theorem and the long exact sequence of fibration. Thus the map $g:S^2\to B\mathrm{SO}(3)$ representing the nontrivial element in $\pi_2$ classifies the unique nontrivial oriented $S^2$ bundle over $S^2$. We need to show $f_*[\R P^1\times \R P^1]=g_*[S^2]$. This is actually the case since the following diagram commutes up to homotopy, i.e. the two pull back oriented $S^2$ bundles on $T^2$ are isomorphic, where $p$ is a degree-one map.
	\[	
	\begin{tikzcd}
	\R P^1\times \R P^1\cong T^2 							\arrow[d] \arrow[r, "p"]	& S^2		\arrow[d, "g"]\\
	\R P^\infty\times \R P^\infty	\arrow[r, "f"]				& B\Homeorm^+(S^2)\\
	\end{tikzcd}
	\]
\end{proof}

Finally we are in a place to prove Theorem \ref{thm: H_2}.
\begin{proof}[Proof of Theorem \ref{thm: H_2}]
	Recall that $B_*=\cup_P B_{\{p\}}$ where $p$ runs over the Cantor set. Then Lemma \ref{lemma: H_2 stable} implies that $H_2(B_*)$ is either $0$ or $\Z/2$. Combining with Lemma \ref{lemma: fragmentation} and Lemma \ref{lemma: H_2 surj Z/2}, we conclude that $H_2(\Gammahat)\cong \Z/2$. Then the desired result follows from Lemma \ref{lemma: H_1} and the universal coefficient theorem.
\end{proof}
\begin{rmk}
	The fact that $H_2(\Gammahat)\cong \Z/2$ in turn implies that $\mathrm{coker}(i_P)_*=\Z/2$ for all $P$ with $2\le |P|<\infty$ in Lemma \ref{lemma: key H_2 coker}, and that $H_2(\cup_{p\in P} B_{\{p\}})$ actually stabilizes to $\Z/2$ in Lemma \ref{lemma: H_2 stable}.
\end{rmk}

The Birman exact sequence gives rise to a Leray--Serre spectral sequence relating the
(co)-homology of $\Gamma$ and $\Gammahat$. Since $H^2(\Gammahat)=H^2(\Omegahat)=0$ it follows
that there is a left-exact sequence

$$0 \to H^2(\Gamma) \to H^1(\Gammahat,H^1(\Omegahat)) \to H^3(\Gammahat)$$
where the last map is the $d_2$ differential. We know that $H^2(\Gamma)$ contains a $\Z$
subgroup, generated by the Euler class of the action on the simple circle. 
This class is not torsion, since $\Gamma$ contains
torsion of all orders, so no finite index subgroup lifts to an action on $\R$. 

\begin{quest}
Does $H^2(\Gamma)=\Z$? 
\end{quest}

\begin{quest}
What class in $H^1(\Gammahat,H^1(\Omegahat))$ is the image of the Euler class in $H^2(\Gamma)$?
\end{quest}

\end{document}